\newcommand{\scalar}[1]{\left\langle\, #1 \,\right\rangle} 
\newcommand{\norm}[1]{\left\Vert #1 \right\Vert} 
\renewcommand{\Re}{{\mathbbm R}}
\newcommand{\EE}{ {\{ \textsc{e}, \textsc{e}\} } } 
\newcommand{\EI}{ {\{ \textsc{e}, \textsc{i}\} } } 
\newcommand{\IE}{ {\{ \textsc{i}, \textsc{e}\} } } 
\newcommand{\II}{ {\{ \textsc{i}, \textsc{i}\} } } 
\newcommand{\E}{ {\{ \textsc{e}\} } } 
\newcommand{\I}{ {\{ \textsc{i}\} } } 
\def\tableofcontents{\section*{Contents.\@mkboth{CONTENTS}{CONTENTS}\hskip 1em}  
 \@starttoc{toc}} 
\newtheorem{remark}{Remark}
\newtheorem{example}{Example}
\newcommand{\one}{1\hspace{-0,9ex}1} 
\newcommand{\e}{e\hspace{-0,9ex}e} 
\begin{document}

\title{A class of generalized \\ additive Runge-Kutta methods\thanks{
The work of A. Sandu has been supported in part by NSF through awards NSF
OCI--8670904397, NSF CCF--0916493, NSF DMS--0915047, NSF CMMI--1130667, 
NSF CCF--1218454, AFOSR FA9550--12--1--0293--DEF, AFOSR 12-2640-06,
and by the Computational Science Laboratory at Virginia Tech. The work of M. G\"unther has been supported in part by BMBF through grant 
03MS648E.}
}

\author{Adrian Sandu\thanks{Virginia Polytechnic Institute and State University, Computational Science Laboratory, Department of 
Computer Science, 2202 Kraft Drive, Blacksburg, VA 24060, USA ({\tt sandu@cs.vt.edu})}
\and
Michael G\"unther\thanks{Bergische Universit\"at Wuppertal,
        Institute of Mathematical Modelling, Analysis and Compuational
        Mathematics (IMACM), Gau\ss strasse 20, D-42119 Wuppertal, Germany 
        ({\tt guenther@uni-wuppertal.de}).}}

%

\thispagestyle{empty}
\setcounter{page}{0}

\begin{Huge}
\begin{center}
Computational Science Laboratory Technical Report CSL-TR-{\tt 5/2013} \\
\today
\end{center}
\end{Huge}
\vfil
\begin{huge}
\begin{center}
Adrian Sandu and Michael G\"{u}nther
\end{center}
\end{huge}

\vfil
\begin{huge}
\begin{it}
\begin{center}
``{\tt A class of generalized \\ additive Runge-Kutta methods}''
\end{center}
\end{it}
\end{huge}
\vfil

\begin{large}
\begin{center}
Computational Science Laboratory \\
Computer Science Department \\
Virginia Polytechnic Institute and State University \\
Blacksburg, VA 24060 \\
Phone: (540)-231-2193 \\
Fax: (540)-231-6075 \\ 
Email: \url{sandu@cs.vt.edu} \\
Web: \url{http://csl.cs.vt.edu}
\end{center}
\end{large}

\vspace*{1cm}

\begin{tabular}{ccc}
\includegraphics[width=2.5in]{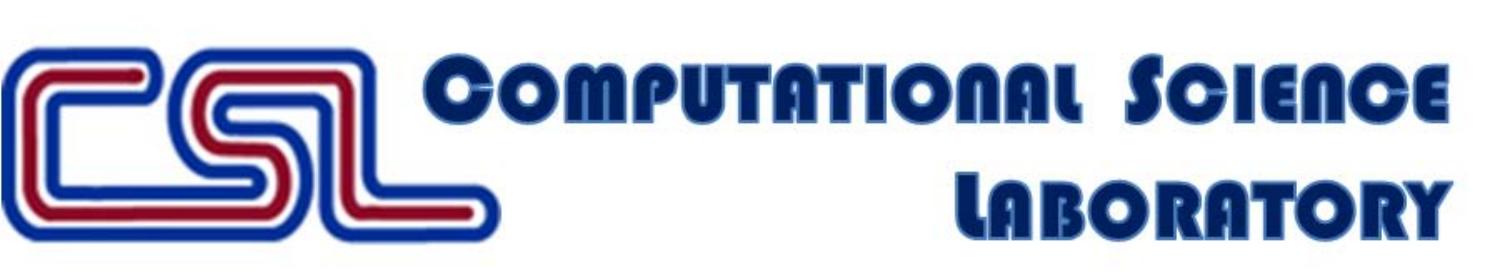}
&\hspace{2.5in}&
\includegraphics[width=2.5in]{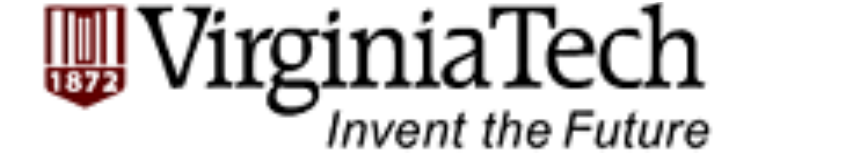} \\
{\bf\em Innovative Computational Solutions} &&\\
\end{tabular}

\newpage

\maketitle


\begin{abstract}
This work generalizes the additively partitioned Runge-Kutta methods by allowing 
for different stage values as arguments of different components of the right hand side.
An order conditions theory is developed for the new family of generalized additive methods,
and stability and monotonicity investigations are carried out. The paper discusses the construction and properties of implicit-explicit 
and implicit-implicit,methods in the new framework. The new family, named GARK, introduces additional flexibility when compared 
to traditional partitioned Runge-Kutta methods, and therefore offers additional opportunities for the development of flexible solvers 
for systems with multiple scales, or driven by multiple physical processes.
\end{abstract}

\begin{keywords} 
Partitioned Runge-Kutta methods, NB-series, algebraic stability, absolute monotonicity, implicit-explicit, implicit-implicit methods
\end{keywords}

\begin{AMS}
65L05, 65L06, 65L07, 65L020.
\end{AMS}

\pagestyle{myheadings}
\thispagestyle{plain}
\markboth{ADRIAN SANDU AND MICHAEL G\"UNTHER}{GARK METHODS}

\section{Introduction}

In many applications, initial value problems of ordinary differential equations are given as {\it additively} partitioned systems
\begin{equation} 
\label{eqn:additive-ode}
 y'= f(y) = \sum_{m=1}^N f^{\{m\}} (y)\,, \quad  y(t_0)=y_0\,,
\end{equation}
where the right-hand side $f: \Re^d \rightarrow \Re^d$ is split into $N$ different parts 
with respect to, for example, stiffness, nonlinearity, dynamical behavior, and evaluation cost. Additive partitioning also includes the special case of {\em component} partitioning where
the solution vector is split into $N$ disjoint sets, $\{1,2,\ldots,d\} = \cup_{m=1}^N\, {\cal I}^{\{m\}}$, with the $m$-th set containing the components $y^i$ with indices $i \in {\cal I}^{\{m\}}$.
One defines a corresponding partitioning of the right hand side 
\begin{equation}
\label{eqn:component-partitioning}
f^{\{m\}}(y) := \sum_{i \in {\cal I}^{\{m\}}} \e_i\, \e_i^T\, f(y), \quad
f^{\{m\}i} (y) = \left\{
\begin{array}{ll}
f^i(y), ~ & i \in {\cal I}^{\{m\}} \\
0, ~ & i \notin {\cal I}^{\{m\}}
\end{array}
\right.,
\end{equation}
where $\e_i$ is the $i$-th column of the identity matrix, and superscripts (without parentheses) represent
vector components.
A particular case is {\em coordinate} partitioning where $d=N$ and ${\cal I}^{\{m\}}=\{m\}$, $m=1,\ldots,d$:
\begin{equation} 
\label{eqn:component-ode}
y' = \begin{bmatrix} y^1  \\ \vdots \\ y^N  \end{bmatrix}'   
= \sum_{m=1}^d  \begin{bmatrix} 0  \\ y^m      \\ 0 \end{bmatrix}'   
= \sum_{m=1}^d  \begin{bmatrix} 0  \\ f^m(y)  \\ 0 \end{bmatrix}\,.
\end{equation}

The development of Runge-Kutta (RK) methods that are tailored to the partitioned system~\eqref{eqn:additive-ode} 
started with the early work of Rice \cite{Rice_1960}. Hairer \cite{Hairer_1981_Pseries} developed the concept of P-trees and laid the foundation for
the modern order conditions theory for partitioned Runge-Kutta methods. The investigation of practical
partitioned Runge-Kutta methods with implicit and explicit components  \cite{Rentrop_1985} was revitalized by the work of Ascher, Ruuth, and Spiteri \cite{Ascher_1997_IMEX_RK}.
Additive Runge-Kutta methods have been investigated by Cooper and Sayfy \cite{Cooper_1983_ARK}
and Kennedy and Carpenter \cite{Kennedy_2003}, and
partitioning strategies have been discussed by Weiner \cite{Weiner_1993_partitioning}. 

This work focuses on specialized schemes which exploit the different dynamics in the right hand sides $f^{\{m\}}$
(e.g., stiff and non-stiff),
allow for arbitrarily high orders of accuracy, and posses good stability properties with respect to dissipative systems. 
The approach taken herein 
generalizes the additively partitioned Runge-Kutta family of methods \cite{Kennedy_2003} by allowing for different stage values as arguments of different
components of the right hand side. 

The paper is organized as follows.
Section \ref{sec:GARK} introduces the new family of generalized additively partitioned Runge-Kutta schemes.
Generalized implicit-explicit Runge-Kutta schemes are discussed in Section \ref{sec:imex}.
A stability and monotonicity analysis is performed in Section \ref{sec:stability}.
Section \ref{sec:imim} builds implicit-implicit 
generalized additively partitioned Runge-Kutta methods.
Conclusions are drawn in Section \ref{sec:conclusions}.

\section{Generalized additively partitioned Runge-Kutta schemes}\label{sec:GARK}
In this section we extend additive Runge-Kutta to generalized additively partitioned Runge-Kutta schemes. The order conditions for this new class are derived using the N-tree theory of Sanz-Serna \cite{SaSe1997}.

\subsection{Traditional additive Runge-Kutta methods}

Kennedy and Carpenter \cite{Kennedy_2003} developed additive Runge-Kutta (ARK) methods for systems with partitioned right hand sides~\eqref{eqn:additive-ode}.
One step of an ARK scheme reads
\begin{subequations}
\label{eqn:ARK}
\begin{eqnarray}
\label{eqn:ARK-stage}
Y_i &=& y_{n} + h \sum_{m=1}^N \sum_{j=1}^s a_{i,j}^{\{m\}} \, f^{\{m\}}(Y_j), \quad i=1,\ldots,s\,,  \\
\label{eqn:ARK-solution}
y_{n+1} &=& y_n + h \sum_{q=1}^N\, \sum_{i=1}^s b_{i}^{\{q\}} \, f^{\{q\}}(Y_i) \,.
\end{eqnarray}
\end{subequations}
The corresponding Butcher tableau is
\begin{equation}
\begin{array}{cccc}
\mathbf{A}^{\{1\}} & \mathbf{A}^{\{2\}} & \ldots & \mathbf{A}^{\{N\}} \\ \hline
\mathbf{b}^{\{1\}} & \mathbf{b}^{\{2\}} & \ldots &\mathbf{b}^{\{N\}}
\end{array}\,.
\end{equation}
%

\subsection{Generalized additively partitioned Runge-Kutta schemes}

The {\em generalized additively partitioned Runge-Kutta} (GARK) family of methods
extends the traditional approach \eqref{eqn:ARK} by allowing for different stage values with different components of the right hand side. 

\begin{definition}[GARK methods]
One step of a GARK scheme applied to solve \eqref{eqn:additive-ode} reads: 
\begin{subequations}
\label{eqn:GARK}
\begin{eqnarray}
\label{eqn:GARK-stage}
Y_i^{\{q\}} &=& y_{n} + h \sum_{m=1}^N \sum_{j=1}^{s^{\{m\}}} a_{i,j}^{\{q,m\}} \, f^{\{m\}}\left(Y_j^{\{m\}}\right), \\
\nonumber
&& \qquad i=1,\dots,s^{\{q\}}, \quad q=1,\ldots,N\,,  \\
\label{eqn:GARK-solution}
y_{n+1} &=& y_n + h \sum_{q=1}^N\, \sum_{i=1}^{s^{\{q\}}} b_{i}^{\{q\}} \, f^{\{q\}}\left(Y_i^{\{q\}}\right) \,.
\end{eqnarray}
\end{subequations}
\end{definition}

The corresponding generalized Butcher tableau is
\begin{equation}
\label{eqn:general-Butcher-tableau}
\begin{array}{cccc}
\mathbf{A}^{\{1,1\}} & \mathbf{A}^{\{1,2\}} & \ldots & \mathbf{A}^{\{1,N\}} \\
\mathbf{A}^{\{2,1\}} &\mathbf{A}^{\{2,2\}} & \ldots & \mathbf{A}^{\{2,N\}} \\
\vdots & \vdots & & \vdots \\
\mathbf{A}^{\{N,1\}} & \mathbf{A}^{\{N,2\}} & \ldots & \mathbf{A}^{\{N,N\}} \\ \hline
\mathbf{b}^{\{1\}} & \mathbf{b}^{\{2\}} & \ldots &\mathbf{b}^{\{N\}}
\end{array}
\end{equation}
In contrast to traditional additive methods~\cite{Kennedy_2003} different stage values are used with different components of the right hand side. 
The methods $(\mathbf{A}^{\{q,q\}},\mathbf{b}^{\{q\}})$ can be regarded as stand-alone integration schemes
applied to each individual component $q$. The off-diagonal matrices $\mathbf{A}^{\{q,m\}}$, $m \ne q$, can be 
viewed as a coupling mechanism among components.

For the special case of component partitioning \eqref{eqn:component-partitioning} a GARK step \eqref{eqn:GARK} reads
\begin{subequations}
\label{eqn:GARK-componentwise}
\begin{eqnarray}
Y_i^{\{q\}} & = & 
\begin{bmatrix}
y_{n}^{\{1\}} \\ 
\vdots \\ y_{n}^{\{N\}}
\end{bmatrix}
+
h\,
\begin{bmatrix}
 \sum_{j=1}^{s^{\{1\}}} a_{i,j}^{\{q,1\}} f^{\{1\}}\left(Y_j^{\{1\}}\right) \\
\vdots \\
\sum_{j=1}^{s^{\{N\}}} a_{i,j}^{\{q,N\}} f^{\{N\}}\left(Y_j^{\{N\}}\right)
\end{bmatrix}, \\[3pt]
\begin{bmatrix}
y_{n+1}^{\{1\}} \\ 
\vdots \\ y_{n+1}^{\{N\}}
\end{bmatrix}
& = & 
\begin{bmatrix}
y_{n}^{\{1\}} \\ 
\vdots \\ y_{n}^{\{N\}}
\end{bmatrix}
+ h \begin{bmatrix}
\sum_{i=1}^{s^{\{1\}}} b_{i}^{\{1\}} f^{\{1\}}\left(Y_i^{\{1\}}\right) \\
\vdots \\
\sum_{i=1}^{s^{\{N\}}} b_{i}^{\{N\}} f^{\{N\}}\left(Y_i^{\{N\}}\right) 
\end{bmatrix}\,.
\end{eqnarray}
\end{subequations}

\begin{definition}[Internally consistent GARK methods]
A GARK scheme \eqref{eqn:GARK} is called internally consistent if
\begin{equation}
\label{eqn:GARK-internal-consistency}
\sum_{j=1}^{s^{\{1\}}} a_{i,j}^{\{q,1\}} 
= \dots = \sum_{j=1}^{s^{\{N\}}} a_{i,j}^{\{q,N\}} = c_i^{\{q\}}\,,
\quad i=1,\dots, s^{\{q\}}\,, \quad q=1,\dots,N.
\end{equation}
\end{definition}
In order to understand internal consistency
consider the system 
\[
 y'= f(y) = \sum_{m=1}^N \begin{bmatrix} \alpha^{\{m\}} \\ \beta^{\{m\}} \end{bmatrix} \,, \quad  y(t_0)=\begin{bmatrix} 0 \\ 0 \end{bmatrix}\,,
 \quad \sum_{m=1}^N \alpha^{\{m\}}= \sum_{m=1}^N \beta^{\{m\}}=1\,,
\]
with the exact solution $y(t) =[ t,t]^T$.
The GARK stages \eqref{eqn:GARK-stage} are
\begin{eqnarray*}
Y_i^{\{q\}} &=& \begin{bmatrix} t_{n} + h\, \sum_{m=1}^N \alpha^{\{m\}}\, \sum_{j=1}^{s^{\{m\}}} a_{i,j}^{\{q,m\}} 
\\ t_{n} + h\, \sum_{m=1}^N \beta^{\{m\}}\, \sum_{j=1}^{s^{\{m\}}} a_{i,j}^{\{q,m\}} 
\end{bmatrix} = 
\begin{bmatrix} t_{n} + h \, c_i^{\{q\}} \\  t_{n} + h \, c_i^{\{q\}} \end{bmatrix} \,.
\end{eqnarray*}
The internal consistency condition \eqref{eqn:GARK-internal-consistency} ensures that all components of the 
stage vectors are calculated at the same internal approximation times.

\subsection{Order conditions}
%
We derive the GARK order conditions by applying Araujo, Murua, and Sanz-Serna's N-tree theory \cite{SaSe1997} to~\eqref{eqn:GARK},  while taking into account the fact that the internal stages $Y_i^{\{m\}}$ and the stage numbers $s^{\{m\}}$ depend on the partition $f^{\{m\}}$.

N-trees \cite{SaSe1997} are a generalization of P-trees from the case of component partitioning~\eqref{eqn:component-ode} to the general case of right-hand side partitioning~\eqref{eqn:component-partitioning}. The set NT of N-trees consists of all Butcher trees with colored vertices; each vertex is assigned one of $N$ different colors corresponding to the $N$ components of the partition. Similar to regular Butcher trees each vertex is also assigned a label.  The order $\rho(u)$ is the number of nodes of $u \in \textnormal{NT}$.

The empty N-tree is denoted by $\emptyset$. The N-tree with a single vertex of color $\nu$ is denoted by $\tau^{[\nu]}$. The N-tree $u \in \textnormal{NT}$ with $\rho(u)>1$ and a root of color $\nu$ can be represented as $u=[u_1,\ldots,u_m]^{[\nu]}$, where $\{u_1,\ldots,u_m\}$ are the  non-empty subtrees (N-trees) arising from removing the root of $u$. 

 Similar to regular Butcher trees one denotes by $\sigma(u)$ the number of symmetries of $u \in \textnormal{NT}$, and by $\gamma(u)$ 
 the density defined recursively by
\begin{eqnarray*}
\gamma(\emptyset) & = & 1, \\
\gamma\left(\tau^{[\nu]}\right) & = & 1, \quad \nu=1,\ldots,N, \\
\gamma(u)  & = & \rho(u) \gamma(u_1) \cdots \gamma(u_m) \text{ for } u=[u_1,\ldots,u_m]^{[\nu]}.
\end{eqnarray*}

An elementary differential $F(u)(\cdot) : \Re^d \to \Re^d$ is associated to each N-tree $u \in \textnormal{NT}$. 
The elementary differentials are defined recursively for each component 
$i=1,\ldots,d$ as follows:
\begin{eqnarray*}
F^i(\emptyset)(y) & = & y^i, \\
F^i\left(\tau^{[\nu]}\right)(y) & = & f^{[\nu]i}(y), \quad \nu=1,\dots,N\,, \\
F^i(u)(y) & = &\sum_{i_1,\ldots,i_m=1}^{d}  \frac{\partial^m\, f^{[\nu]i}}{\partial y^{i_1} \cdots \partial y^{i_m}}(y) F^{i_1}(u_1)(y) \cdots F^{i_m}(u_m)(y) \\
&& \quad \textnormal{for} ~~u=[u_1,\ldots,u_m]^{[\nu]}\,.
\end{eqnarray*}
An NB-series is a formal power expansion
\[
\textnormal{NB}(\mathbf{c},y(t)):= \sum_{u \in \textnormal{NT}} \frac{h^{\rho(u)}}{\sigma(u)}\, \mathbf{c}(u)\, F(u)(y(t))\,,
\]
where $\mathbf{c} : \textnormal{NT} \to \Re$ is a mapping that assigns a real number  to each N-tree.
For example, the exact solution of \eqref{eqn:additive-ode} can be written as the following NB-series \cite{SaSe1997}
\begin{equation}
\label{nbseries.exactsol}
y(t+h)=\textnormal{NB}(\mathbf{c},y(t)) \quad \textnormal{with} \quad \mathbf{c}(u)=\frac{1}{\gamma(u)}.
\end{equation}

\begin{theorem}[GARK order conditions]
The order conditions for a GARK method \eqref{eqn:GARK} are obtained from the order conditions of ordinary Runge-Kutta methods.  
The usual labeling of the Runge-Kutta coefficients (subscripts $i,j,k,\ldots$) is accompanied by a corresponding labeling of the different partitions for the N-tree (superscripts $\sigma,\nu,\mu,\ldots$). 
\end{theorem}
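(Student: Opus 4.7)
The plan is to establish an NB-series representation for both the internal stages and the numerical solution produced by the GARK scheme \eqref{eqn:GARK}, and then to match these series coefficients against those of the exact solution in \eqref{nbseries.exactsol}. This follows Sanz-Serna's N-tree calculus, adapted to the new fact that different partitions now carry different stage vectors.

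First, I would prove by induction on $\rho(u)$ that each stage admits an expansion
\[
Y_i^{\{q\}} = \textnormal{NB}\bigl(\Phi_i^{\{q\}},\, y_n\bigr),\qquad i=1,\dots,s^{\{q\}},\quad q=1,\dots,N,
\]
where $\Phi_i^{\{q\}}(\emptyset) = 1$ and, for $u=[u_1,\dots,u_p]^{[\nu]}$ with root of color $\nu$,
\[
\Phi_i^{\{q\}}(u) \;=\; \sum_{j=1}^{s^{\{\nu\}}} a_{i,j}^{\{q,\nu\}}\, \Phi_j^{\{\nu\}}(u_1) \cdots \Phi_j^{\{\nu\}}(u_p).
\]
The engine of the induction is the standard NB-series composition rule: substituting an NB-series argument into the component $f^{\{\nu\}}$ produces an NB-series whose trees have a fresh root of color $\nu$ and whose branches are the trees of the argument series. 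This rule passes through unchanged from the classical theory, because evaluating $f^{\{\nu\}}$ at the stage $Y_j^{\{\nu\}}$ is precisely the operation that attaches a vertex of color $\nu$.

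Inserting the stage expansions into \eqref{eqn:GARK-solution} then yields
\[
y_{n+1} = \textnormal{NB}(\Psi,\,y_n),\qquad \Psi(u) = \sum_{j=1}^{s^{\{\nu\}}} b_j^{\{\nu\}}\, \Phi_j^{\{\nu\}}(u_1)\cdots \Phi_j^{\{\nu\}}(u_p),\quad u=[u_1,\dots,u_p]^{[\nu]}.
\]
Comparing with \eqref{nbseries.exactsol}, the GARK scheme has order $p$ iff $\Psi(u) = 1/\gamma(u)$ for every N-tree with $\rho(u)\le p$. Unrolling the recursion shows that $\Psi(u)$ is a sum of products of coefficients $a_{i,j}^{\{q,m\}}$ and $b_j^{\{q\}}$ indexed exactly like in the classical Runge-Kutta weight attached to the underlying uncolored Butcher tree, with the convention that an edge from a child of color $m$ to a parent of color $q$ contributes $a^{\{q,m\}}$ and the edge to the root of color $q$ contributes $b^{\{q\}}$. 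Thus each classical order condition generates one GARK condition per coloring of the vertices of the underlying tree, which is precisely the statement of the theorem.

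The main obstacle is purely bookkeeping: in passing from ARK to GARK one must recognize that in the recursion for $\Phi_i^{\{q\}}(u)$ the inner summation index $j$ runs over the stages $s^{\{\nu\}}$ of the \emph{child's} partition, and all subtrees $u_1,\dots,u_p$ share the same $j$ (they originate from a single evaluation of $f^{\{\nu\}}$ at the stage $Y_j^{\{\nu\}}$). Honoring this distinction at every step of the induction decouples the labeling of Butcher coefficients from the color of the outer stage and produces the labeled Runge-Kutta formulas described above; the remainder of the argument is a verbatim transcription of the standard rooted-tree combinatorics.
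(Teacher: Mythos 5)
Your proposal is correct and follows essentially the same route as the paper: expand the stages and the update in NB-series, derive the colored recursion in which the inner summation index runs over the stages of the child's partition, and observe that unrolling it reproduces the classical Runge--Kutta tree weights with a color superscript paired to each subscript, one condition per coloring. The only cosmetic difference is that you fold the coefficients of $h\,f^{\{m\}}\bigl(Y_j^{\{m\}}\bigr)$ directly into the stage recursion, whereas the paper carries them as separate quantities $\mathbf{g}_j^{\{m\}}$ before combining.
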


Let $\one^{{\{\nu\}}}$ be a vector of ones of dimension $s^{{\{\nu\}}}$, and  $\mathbf{c}^{\{\sigma,\nu\}} = \mathbf{A}^{\{\sigma,\nu\}} \cdot \one^{{\{\nu\}}}$.
The specific conditions for orders one to four are as follows.
\begin{subequations} 
\label{eqn:GARK-order-conditions-1-to-4}
\begin{align}
\label{eqn:GARK-order-conditions-1}
\mathbf{b}^{\{\sigma\}}\,^T \cdot \one^{{\{\sigma\}}} = 1,  & \quad \forall\; \sigma. &  (order~ 1) \\
\label{eqn:GARK-order-conditions-2}
 \mathbf{b}^{\{\sigma\}}\,^T \cdot \mathbf{c}^{\{\sigma,\nu\}}  =  \frac{1}{2}, &\quad  \forall\; \sigma, \nu,  &  (order~ 2) \\
\label{eqn:GARK-order-conditions-3a}
\mathbf{b}^{\{\sigma\}}\,^T \cdot \left(  \mathbf{c}^{\{\sigma,\nu\}}  \mathbf{c}^{\{\sigma,\mu\}} \right)
 =  \frac{1}{3}, & \quad \forall\; \sigma,\nu, \mu, & (order~ 3) \\
\label{eqn:GARK-order-conditions-3b}
\mathbf{b}^{\{\sigma\}}\,^T \cdot \mathbf{A}^{\{\sigma,\nu\}}  \cdot \mathbf{c}^{\{\nu, \mu\}} 
 =  \frac{1}{6}, & \quad \forall\; \sigma,\nu, \mu,  & (order~ 3) \\
\label{eqn:GARK-order-conditions-4a}
\mathbf{b}^{\{\sigma\}}\,^T \cdot \left( \mathbf{c}^{\{\sigma,\nu\}}  \mathbf{c}^{\{\sigma,\lambda\}}  \mathbf{c}^{\{\sigma,\mu\}} \right)
 =  \frac{1}{4}, & \quad \forall\; \sigma,\nu,\lambda,\mu, & (order~ 4) \\
\label{eqn:GARK-order-conditions-4b}
\left(\mathbf{b}^{\{\sigma\}} \mathbf{c}^{\{\sigma,\mu\}} \right)^T \cdot \mathbf{A}^{\{\sigma,\nu\}}  \cdot \mathbf{c}^{\{\nu,\lambda\}}  
 =  \frac{1}{8}, & \quad \forall\; \sigma,\nu,\lambda,\mu, & (order~ 4) \\
\label{eqn:GARK-order-conditions-4c}
\mathbf{b}^{\{\sigma\}}\,^T \cdot \mathbf{A}^{\{\sigma,\nu\}}  \cdot
\left( \mathbf{c}^{\{\nu,\lambda\}}  \mathbf{c}^{\{\nu,\mu\}} \right)
 =  \frac{1}{12},  &\quad \forall\; \sigma,\nu,\lambda,\mu, &  (order~ 4) \\
\label{eqn:GARK-order-conditions-4d}
\mathbf{b}^{\{\sigma\}}\,^T \cdot  \mathbf{A}^{\{\sigma,\nu\}}  \cdot \mathbf{A}^{\{\lambda,\mu\}}  \cdot \mathbf{c}^{\{\lambda,\mu\}}
 =  \frac{1}{24}, & \quad \forall\; \sigma,\nu,\lambda,\mu. & (order~ 4)
\end{align}
\end{subequations}
Here, and throughout this paper, the matrix and vector multiplication is denoted by dot (e.g., $\mathbf{b}^T \cdot \mathbf{c}$ is a dot product), 
while two adjacent vectors denote  component-wise multiplication  (e.g., $\mathbf{b}\, \mathbf{c}$ is a vector of element-wise products).
These order conditions are given in element-wise form in Appendix \ref{sec:oder-conditions-detailed}.

\begin{proof}
We derive the NB-series of the solution of the GARK scheme \eqref{eqn:GARK} following the approach in~\cite{SaSe1997} for the NB-series of ARK schemes. 
The GARK solution, the stage vectors, and the stage function values are expanded in NB-series
\begin{eqnarray*}
y_{n+1} &=& \textnormal{NB}(\mathbf{d},y_n) \,, \\
Y_i^{\{q\}} &=& \textnormal{NB}(\mathbf{d}^{\{q\}}_i,y_n)\,, \\
h f^{\{m\}}\left(Y_i^{\{m\}}\right)  &=& \textnormal{NB}(\mathbf{g}^{\{m\}}_i,y_n)\,.
\end{eqnarray*}
From \eqref{nbseries.exactsol} we have that the method GARK is of order $p$ iff 
\begin{equation}
\label{eqn:GARK_nbseries-order}
\mathbf{d}(u) = \frac{1}{\gamma(u)}, \quad \forall\, u \in \textnormal{NT} \quad \textnormal{with}\quad  1 \leq \rho(u) \leq p\,.
\end{equation}

The coefficients $\mathbf{d}$,
$\mathbf{d}^{\{q\}}_i$ and $\mathbf{g}^{\{m\}}_i$ 
are related through the numerical equations \eqref{eqn:GARK}.
For $u \in \textnormal{NT} \backslash \{ \emptyset \}$ relation \eqref{eqn:GARK-solution} yields
\begin{subequations}
\label{eqn:nbtrees}
\begin{eqnarray}
\label{eqn:nbtrees-solution}
\mathbf{d}(u) & = & \sum_{q=1}^N \sum_{i=1}^{s^{\{q\}}} b_i^{\{q\}} \mathbf{g}_i^{\{q\}}(u), 
\end{eqnarray}
and relation \eqref{eqn:GARK-stage} gives
\begin{eqnarray}
\label{eqn:nbtrees-stage}
\mathbf{d}_i^{\{q\}}(u) & = & \sum_{m=1}^N \sum_{j=1}^{s^{\{m\}}} a_{i,j}^{\{q,m\}} \mathbf{g}_j^{\{m\}}(u), \\
\nonumber
&& \quad i=1,\ldots,s^{\{q\}}, \quad q=1,\ldots, N.
\end{eqnarray}
From the properties of a derivative of a B-series on has
\begin{eqnarray}
\label{eqn:nbtrees-stage-fun}
\mathbf{g}_i^{\{m\}}(u) & = & 
\left\{
\begin{array}{ll}
0 & \textnormal{for}~~ u = \emptyset\,, \\
\delta_{q,m} & \textnormal{for}~~ u = \tau^{\{q\}}\,, \\
 \delta_{q,m} \prod_{k=1}^\ell \mathbf{d}_i^{\{m\}}(u_k) \quad & \textnormal{for}~~ u = [u_1,\ldots,u_\ell]^{\{q\}}\,,
 \end{array}
\right. \\
\nonumber
&& i=1,\ldots,s^{\{m\}}, \quad m,q=1,\ldots, N\,.
\end{eqnarray}
\end{subequations}
Using equations \eqref{eqn:nbtrees-stage}--\eqref{eqn:nbtrees-stage-fun} recursively, we have that for $u=[u_1,\ldots,u_\ell]^{\{q\}}$ 
\[
\mathbf{g}_i^{\{m\}}(u) = \delta_{q,m} \sum_{n_1,\ldots,n_\ell} \sum_{j_1,\ldots,j_\ell} a_{i,j_1}^{\{m,n_1\}} \cdots 
a_{i,j_\ell}^{\{m,n_\ell\}} \mathbf{g}_{j_1}^{\{n_1\}}(u_1) \cdots \mathbf{g}_{j_\ell}^{\{n_\ell\}}(u_\ell)\,.
\]
Equation \eqref{eqn:nbtrees-stage-fun} becomes
\begin{eqnarray}
\label{eqn:nbtrees-stage-fun-3}
\mathbf{g}_i^{\{m\}}(u) & = & 
\left\{
\begin{array}{l}
0 \quad \textnormal{for}~~ u = \emptyset\,, \\
0 \quad \textnormal{for}~~ u = [u_1,\ldots,u_\ell]^{\{q\}}~~ \textnormal{and}~~q \ne m\,, \\
1 \quad \textnormal{for}~~ u = \tau^{\{m\}}\,, \\
 \sum_{j_1,\ldots,j_\ell=1}^{s^{\{m\}}} a_{i,j_1}^{\{m,\nu_1\}} \cdots 
a_{i,j_\ell}^{\{m,\nu_\ell\}} \mathbf{g}_{j_1}^{\{\nu_1\}}(u_1) \cdots \mathbf{g}_{j_\ell}^{\{\nu_\ell\}}(u_\ell)  \\
\qquad  \textnormal{for}~~ u = [u_1,\ldots,u_\ell]^{\{m\}}~~ \textnormal{and}~~\textnormal{color(root}(u_j)) = \nu_j\,,
 \end{array}
\right. \\
\nonumber
&& i=1,\ldots,s^{\{m\}}, \quad m=1,\ldots, N\,.
\end{eqnarray}
From \eqref{eqn:nbtrees-solution} we have
\begin{eqnarray}
\label{eqn:nbtrees-solution-expanded}
\mathbf{d}(u) & = & \sum_{i=1}^{s^{\{m\}}} b_i^{\{m\}} \mathbf{g}_i^{\{m\}}(u)\quad \textnormal{for}~~ u = [u_1,\ldots,u_\ell]^{\{m\}}\,.
\end{eqnarray}

For a regular Runge-Kutta method we ignore the coloring of the tree nodes and consider $u \in T$, where $T$ are the regular Butcher trees. The NB-series expansions are regular
B-series expansions
\begin{eqnarray*}
y_{n+1} &=& \textnormal{B}(\mathbf{c},y_n) \,, \quad
Y_i = \textnormal{B}(\mathbf{c}_i,y_n)\,, \quad
h f\left(Y_i\right)  = \textnormal{B}(\mathbf{f}_i,y_n)\
\end{eqnarray*}
whose coefficients are given by
\begin{eqnarray}
%
%
\label{eqn:btrees-stage-fun}
\mathbf{f}_i(u) & = & 
\left\{
\begin{array}{l}
0 \quad \textnormal{for}~~ u = \emptyset\,, \\
1 \quad \textnormal{for}~~ u = \tau\,, \\
 \sum_{j_1,\dots,j_\ell}^{s} a_{i,j_1} \dots a_{i,j_\ell}\; \mathbf{f}_{j_1}(u) \dots  \mathbf{f}_{j_\ell}(u) \\
 \qquad  \textnormal{for}~~ u = [u_1,\ldots,u_\ell]^{\{\bullet\}}\,,
 \end{array}
\right.  \\
\nonumber
&& \quad  i=1,\ldots,s\,, \\
\label{eqn:btrees-solution-expanded}
\mathbf{c}(u) & = & \sum_{i=1}^{s} b_i \mathbf{f}_i(u).
\end{eqnarray}
The RK method has order $p$ iff
\begin{equation}
\label{eqn:RK_bseries-order}
\mathbf{c}(u) = \frac{1}{\gamma(u)}, \quad \forall\, u \in \textnormal{T} \quad \textnormal{with}\quad  1 \leq \rho(u) \leq p\,.
\end{equation}

For any N-tree $u \in \textnormal{NT}$ the recurrences \eqref{eqn:nbtrees-stage-fun-3} and 
\eqref{eqn:nbtrees-solution-expanded} mimic the B-series relations \eqref{eqn:btrees-stage-fun}
and \eqref{eqn:btrees-solution-expanded}, respectively. Each coefficient subscript in 
\eqref{eqn:btrees-stage-fun}, \eqref{eqn:btrees-solution-expanded} is paired with a unique color superscript
in \eqref{eqn:nbtrees-solution-expanded} and \eqref{eqn:btrees-stage-fun}, for example $i \leftrightarrow \{m\}$,
$j_1\leftrightarrow \{\nu_1\}$, and $j_\ell \leftrightarrow \{\nu_\ell\}$. The NB-series coefficient $\mathbf{d}(u)$
written in terms of the GARK scheme coefficients $b_i^{\{m\}}$,   $a_{i,j}^{\{m,\nu\}}$
has precisely the same form as the B-series coefficient $\mathbf{c}(u)$
written in terms of the RK scheme coefficients $b_i$,   $a_{i,j}$, with color superscripts
added to match the corresponding subscripts. 
(The calculation of $\mathbf{c}(u)$ ignores the vertex coloring of $u$.)
Since \eqref{eqn:GARK_nbseries-order}
has to hold for any coloring of the vertices of $u \in \textnormal{NT}$, the corresponding RK condition \eqref{eqn:RK_bseries-order}
has to hold for arbitrary sets of superscripts, provided that they are paired correctly with the coefficient subscripts.
\end{proof}

\begin{remark}
For N-trees $u \in \textnormal{NT}$ with all vertices of the same color $q$ one obtains the traditional RK order conditions 
for the individual method  $\left( \mathbf{A}^{\{q,q\}}, \,\mathbf{b}^{\{q\}} \right)$. As an example 
consider the order conditions \eqref{eqn:GARK-order-conditions-1-to-4} for equal superscripts $\sigma=\nu=\lambda=\mu=q$.
Therefore a necessary condition for a GARK method to have order $p$ is that each individual component method has order $p$.
In addition, the GARK method needs to satisfy the coupling conditions resulting from trees with vertices of different colors.
\end{remark}

\begin{remark}
For internally consistent GARK methods \eqref{eqn:GARK-internal-consistency} the order conditions
simplify considerably. 
Conditions \eqref{eqn:GARK-order-conditions-1}, \eqref{eqn:GARK-order-conditions-2}, and \eqref{eqn:GARK-order-conditions-3a}
become
\[
\mathbf{b}^{\{\sigma\}}\,^T \cdot \one^{{\{\sigma\}}} = 1,   \qquad 
 \mathbf{b}^{\{\sigma\}}\,^T \cdot \mathbf{c}^{\{\sigma\}}  =  \frac{1}{2}, \qquad 
\mathbf{b}^{\{\sigma\}}\,^T \cdot \left(  \mathbf{c}^{\{\sigma\}}  \mathbf{c}^{\{\sigma\}} \right)
 =  \frac{1}{3},  \qquad \forall\; \sigma,
\]
and correspond to order two conditions, and to the first order three condition of each individual component method  ($\mathbf{A}^{\{\sigma,\sigma\}}$, $\mathbf{b}^{\{\sigma\}}$),
$ \sigma=1,\ldots, N$.

Condition \eqref{eqn:GARK-order-conditions-3b} becomes
\[
\mathbf{b}^{\{\sigma\}}\,^T \cdot \mathbf{A}^{\{\sigma,\nu\}}  \cdot \mathbf{c}^{\{\nu\}} 
 =  \frac{1}{6},  \quad \forall\; \sigma,\nu\,,
\]
and gives raise to $N$ order three conditions for each component method, and $N^2-N$ coupling conditions.
Compare this with the $N^3-N$ coupling conditions in the absence of internal consistency.

The $4N^4$ order four conditions \eqref{eqn:GARK-order-conditions-4a}--\eqref{eqn:GARK-order-conditions-4d}
reduce to the following $N+2N^2+N^3$ equations
\begin{eqnarray*}
\mathbf{b}^{\{\sigma\}}\,^T \cdot \left( \mathbf{c}^{\{\sigma\}}  \mathbf{c}^{\{\sigma\}}  \mathbf{c}^{\{\sigma\}} \right)
 &=&  \frac{1}{4},  \quad \forall\; \sigma,\nu,\lambda,  \\
\left(\mathbf{b}^{\{\sigma\}} \mathbf{c}^{\{\sigma\}} \right)^T \cdot \mathbf{A}^{\{\sigma,\nu\}}  \cdot \mathbf{c}^{\{\nu\}}  
 &=&  \frac{1}{8},  \quad \forall\; \sigma,\nu,\lambda, \\
\mathbf{b}^{\{\sigma\}}\,^T \cdot \mathbf{A}^{\{\sigma,\nu\}}  \cdot
\left( \mathbf{c}^{\{\nu\}}  \mathbf{c}^{\{\nu\}} \right)
 &=&  \frac{1}{12},  \quad \forall\; \sigma,\nu,\lambda,  \\
\mathbf{b}^{\{\sigma\}}\,^T \cdot  \mathbf{A}^{\{\sigma,\nu\}}  \cdot \mathbf{A}^{\{\lambda,\mu\}}  \cdot \mathbf{c}^{\{\lambda\}}
& = &  \frac{1}{24},  \quad \forall\; \sigma,\nu,\lambda\,.
\end{eqnarray*}
\end{remark}

\section{Implicit-explicit GARK schemes}\label{sec:imex}

We now focus on systems \eqref{eqn:additive-ode} with a two-way partitioned right hand side
\begin{equation} 
\label{eqn:imex-ode}
 y'= f(y) + g(y)\,, \quad  y(t_0)=y_0\,,
\end{equation}
where $f$ is non-stiff and $g$ is stiff.

\subsection{Formulation of IMEX-GARK schemes}

An implicit-explicit (IMEX) GARK scheme is a two way partitioned method \eqref{eqn:GARK}
where one component method $(\mathbf{A}^\EE,\mathbf{b}^\E)$ is explicit, and the other one $(\mathbf{A}^\II,\mathbf{b}^\I)$ implicit:
\begin{subequations}
\label{eqn:imexRK}
\begin{eqnarray}
\label{eqn:imex-explicit-stage}
Y_i &=& y_{n} + h  \sum_{j=1}^{s^\E}  a_{i,j}^\EE \, f(Y_j)   + h  \sum_{j=1}^{s^\I} a_{i,j}^\EI \, g(Z_j)\,, ~~ i = 1,\dots, s^\E, ~~\\
\label{eqn:imex-implicit-stage}
Z_i  &=& y_{n} + h  \sum_{j=1}^{s^\E} a_{i,j}^\IE \, f(Y_j)   + h  \sum_{j=1}^{s^\I}  a_{i,j}^\II \, g(Z_j)\,, ~~ i = 1,\dots, s^\I,\\
\label{eqn:imex-sol}
y_{n+1} &=& y_n + h \sum_{i=1}^{s^\E}  b_{i}^\E \, f(Y_i)  + h \sum_{i=1}^{s^\I} b_{i}^\I \, g(Z_i)\,.
\end{eqnarray}
\end{subequations}
The corresponding generalized Butcher tableau is
\begin{equation}
\label{eqn:generalized-tableau}
\begin{array}{c|c|c|c}
\mathbf{c}^\EE & \mathbf{A}^\EE &\mathbf{A}^\EI & \mathbf{c}^\EI  \\ 
\mathbf{c}^\IE & \mathbf{A}^\IE & \mathbf{A}^\II  & \mathbf{c}^\II \\
\hline
& \mathbf{b}^\E & \mathbf{b}^\I 
\end{array}\,,
\end{equation}
with
\[
\mathbf{A}^{\{\sigma,\nu\}} \in \Re^{s^{\{\sigma\}} \times s^{\{\nu\}}}\,, \quad
\mathbf{b}^{\{\sigma\}} \in \Re^{s^{\{\sigma\}}}\,, \quad
\mathbf{c}^{\{\sigma,\nu\}} = \mathbf{A}^{\{\sigma,\nu\}} \cdot \one^{\{\nu\}} \in \Re^{s^{\{\sigma\}} }\,,
\]
for all $\sigma,\nu \in \{\textsc{i},\textsc{e}\}$, where $\one^{\{\nu\}} =[1 \dots 1]^T \in \Re^{s^{\{\nu\}} }$.

\begin{example}[Classical IMEX RK methods]

In \eqref{eqn:imexRK} we use
\[
\mathbf{A}^\IE = \mathbf{A}^\EE \equiv \mathbf{A}^\E  \,, \quad \mathbf{A}^\EI  = \mathbf{A}^\II  \equiv \mathbf{A}^\I \,,
\]
with $\mathbf{A}^\I$ lower triangular and $\mathbf{A}^\E$ strictly lower triangular.
We obtain $s^\E=s^\I=s$, $Y_i = Z_i$, and recover the classical IMEX RK of Ascher, Ruuth, and Spiteri 
\cite{Ascher_1997_IMEX_RK}:
\begin{subequations}
\label{eqn:classical-imexRK}
\begin{eqnarray}
\label{eqn:classical-imex-stage}
Y_i &=& y_{n} + h  \sum_{j=1}^{i-1}  a_{i,j}^\E \, f(Y_j)   + h  \sum_{j=1}^{i} a_{i,j}^\I \, g(Y_j)\,, ~~ i = 1,\dots, s, ~~\\
\label{eqn:classical-imex-sol}
y_{n+1} &=& y_n + h \sum_{i=1}^{s}  b_{i}^\E \, f(Y_i)  + h \sum_{i=1}^{s} b_{i}^\I \, g(Y_i)\,.
\end{eqnarray}
\end{subequations}
\end{example}

\begin{example}[Classical-transposed IMEX RK methods]

In \eqref{eqn:imexRK} we use
\[
\mathbf{A}^\EI = \mathbf{A}^\EE  \equiv \mathbf{A}^\E \,, \quad \mathbf{A}^\IE  = \mathbf{A}^\II  \equiv \mathbf{A}^\I \,,
\]
with $\mathbf{A}^\I$ lower triangular and $\mathbf{A}^\E$ strictly lower triangular,
to obtain $s^\E=s^\I=s$, and the interesting family of schemes:
\begin{subequations}
\label{eqn:transposed-imexRK}
\begin{eqnarray}
\label{eqn:transposed-imex-explicit-stage}
Y_i &=& y_{n} + h  \sum_{j=1}^{i-1}  a_{i,j}^\E \, \left( f(Y_j)   + g(Z_j) \right)\,, ~~ i = 1,\dots, s, ~~\\
\label{eqn:transposed-imex-implicit-stage}
Z_i  &=& y_{n} + h  \sum_{j=1}^{i} a_{i,j}^\I \, \left( f(Y_j)   + g(Z_j) \right)\,, 
~~ i = 1,\dots, s,\\
\label{eqn:transposed-imex-sol}
y_{n+1} &=& y_n + h \sum_{i=1}^{s}  b_{i}^\E \, f(Y_i)  + h \sum_{i=1}^{s} b_{i}^\I \, g(Z_i)\,,
\end{eqnarray}
\end{subequations}
\end{example}

The low storage IMEX methods recently presented by Higueras and Roldan \cite{Higueras_2013_scicade}
can also be formulated as IMEX-GARK schemes \eqref{eqn:imexRK}.

\subsection{Order conditions}

The GARK order conditions in matrix form are given in \eqref{eqn:GARK-order-conditions-1-to-4}.
Each of the implicit and explicit methods $\left(\mathbf{A}^{\{\sigma,\sigma\}}, \mathbf{b}^{\{\sigma\}}\right)$ and $\mathbf{c}^{\{\sigma,\sigma\}} = \mathbf{A}^{\{\sigma,\sigma\}} \cdot \one$ has to satisfy the corresponding order conditions for $\sigma  \in \{ \textsc{e},\textsc{i} \}$. In addition, two  coupling conditions \eqref{eqn:coupling-2} are required for second order,
and 12 coupling conditions \eqref{eqn:coupling-3} are required for third order accuracy. These conditions are listed in Appendix \ref{sec:oder-conditions-imex}.

The GARK internal consistency condition \eqref{eqn:GARK-internal-consistency} reads
\begin{equation}
\label{eqn:simplifying-assumption-c}
\mathbf{c}^\IE = \mathbf{c}^\II = \mathbf{c}^\I\,, \quad \mathbf{c}^\EI = \mathbf{c}^\EE  = \mathbf{c}^\E\,.
\end{equation}
These conditions are automatically satisfied in the case of classical-transposed IMEX RK  methods.
In case of classical IMEX RK equations \eqref{eqn:simplifying-assumption-c} are equivalent to $\mathbf{c}^\I = \mathbf{c}^\E$.

Assuming that both the implicit and the explicit methods have order at least three,
\eqref{eqn:simplifying-assumption-c} implies that the
the second order coupling conditions \eqref{eqn:coupling-2} are automatically satisfied.
The third order coupling conditions \eqref{eqn:coupling-3} reduce to:
\begin{eqnarray*}
\mathbf{b}^\E\,^T \cdot \mathbf{A}^\EI  \cdot \mathbf{c}^\I   & = & \frac{1}{6}\,, \quad
\mathbf{b}^\I\,^T \cdot \mathbf{A}^\IE  \cdot \mathbf{c}^\E    =  \frac{1}{6}\,. 
\end{eqnarray*}

Assume that the implicit and explicit methods have order at least four. With \eqref{eqn:simplifying-assumption-c} 
the order conditions \eqref{eqn:GARK-order-conditions-4a}--\eqref{eqn:GARK-order-conditions-4d} reduce to the following order four coupling relations:
\begin{eqnarray}
\nonumber
\left(\mathbf{b}^{\{\sigma\}} \mathbf{c}^{\{\sigma\}} \right)^T \cdot \mathbf{A}^{\{\sigma,\nu\}}  \cdot \mathbf{c}^{\{\nu\}}  
& = & \frac{1}{8},  \quad \forall \sigma,\nu  \in \{ \textsc{e},\textsc{i} \}, ~ \sigma \ne \nu \,,\\
\label{eqn:imex-coupling-order-4}
\mathbf{b}^{\{\sigma\}}\,^T \cdot \mathbf{A}^{\{\sigma,\nu\}}  \cdot
\left( \mathbf{c}^{\{\nu\}}  \mathbf{c}^{\{\nu\}} \right)
& = & \frac{1}{12},  \quad  \forall \sigma,\nu  \in \{ \textsc{e},\textsc{i} \}, ~ \sigma \ne \nu \,, \\
\nonumber
\mathbf{b}^{\{\sigma\}}\,^T \cdot  \mathbf{A}^{\{\sigma,\nu\}}  \cdot \mathbf{A}^{\{\lambda,\mu\}}  \cdot \mathbf{c}^{\{\lambda\}}
& = & \frac{1}{24},  \quad \forall \sigma,\nu,\lambda, \mu  \in \{ \textsc{e},\textsc{i} \}\,.
\end{eqnarray}
These order conditions are listed explicitly in Appendix \ref{sec:oder-conditions-imex-4}.

Additional simplifying assumptions 
\begin{equation}
\label{eqn:simplifying-assumption-additional}
s^\E = s^\I\,, \quad \mathbf{b}^\E = \mathbf{b}^\I =  \mathbf{b}\,, \quad  \mathbf{c}^\E = \mathbf{c}^\I = \mathbf{c}\,.
\end{equation}
further reduce the number of order conditions; these are are listed  in Appendix \ref{sec:oder-conditions-imex-4}.

\subsection{Construction of classical-transposed IMEX RK}

Consider now the classical-transposed  methods \eqref{eqn:transposed-imexRK} and assume that both the explicit and the implicit method
have order three. Since \eqref{eqn:simplifying-assumption-c} holds, the remaining third order coupling conditions read:

\begin{equation}
\label{eqn:transposed-copuling-order-3}
\mathbf{b}^\E\,^T \cdot \mathbf{A}^\E  \cdot \mathbf{c}^\I  = \frac{1}{6}\,, \quad
\mathbf{b}^\I\,^T \cdot \mathbf{A}^\I  \cdot \mathbf{c}^\E   = \frac{1}{6}\,. 
\end{equation}

Assuming that the explicit and implicit methods have each order at least four,
the order four coupling conditions are:
\begin{equation}
\label{eqn:transposed-copuling-order-4} 
\begin{array}{rcl}
\left(\mathbf{b}^\E \mathbf{c}^\E \right)^T \cdot \mathbf{A}^\E   \cdot \mathbf{c}^\I  & = & \frac{1}{8} \\ 
\mathbf{b}^\E\,^T \cdot \mathbf{A}^\E  \cdot\left( \mathbf{c}^\I  \mathbf{c}^\I \right)& = & \frac{1}{12} \\ 
\mathbf{b}^\E\,^T \cdot \mathbf{A}^\E\cdot \mathbf{A}^\I\cdot \mathbf{c}^\I & = & \frac{1}{24} \\ 
\left(\mathbf{b}^\I \mathbf{c}^\I \right)^T \cdot \mathbf{A}^\I   \cdot \mathbf{c}^\E  & = & \frac{1}{8} \\ 
\mathbf{b}^\I\,^T \cdot \mathbf{A}^\I  \cdot\left( \mathbf{c}^\E  \mathbf{c}^\E \right)& = & \frac{1}{12} \\ 
\mathbf{b}^\I\,^T \cdot \mathbf{A}^\I\cdot \mathbf{A}^\E\cdot \mathbf{c}^\E & = & \frac{1}{24} 
\end{array}
\end{equation}
If, in addition, $\mathbf{b}^\E=\mathbf{b}^\I=\mathbf{b}$ and $\mathbf{c}^\E=\mathbf{c}^\I=\mathbf{c}$, then the third order coupling conditions  
\eqref{eqn:transposed-copuling-order-3} are automatically satisfied. The remaining order four coupling conditions \eqref{eqn:transposed-copuling-order-4} read:
\[
\mathbf{b}^T \cdot \mathbf{A}^\E\cdot \mathbf{A}^\I\cdot \mathbf{c}  =  \frac{1}{24} \,, \quad 
\mathbf{b}^T \cdot \mathbf{A}^\I\cdot \mathbf{A}^\E\cdot \mathbf{c}  = \frac{1}{24} \,.
\]

\paragraph{Example: an order three IMEX method} 

%

The implicit part is Kvaerno's four stages, order three method \cite[ESDIRK 3/2]{Kvaerno_2004_ESDIRK}:
\[
\gamma = 0.435866521508459\,,
\]
\[
\mathbf{A}^\I = \begin{bmatrix} 
                 0           &         0          &          0 & 0 \\
   \gamma    & \gamma &   0 & 0 \\
   0.490563388421781    &0.073570090069760  &  \gamma & 0 \\
   0.308809969976747    &1.490563388421781   & -1.235239879906987 & \gamma \\
\end{bmatrix}\,,
\]
\[
\mathbf{b}^\I = \left[ \begin{array}{r}    
  0.308809969976747 \\
   1.490563388421781 \\
  -1.235239879906987 \\
   0.435866521508459 \\
\end{array}\right]\,, \quad
\mathbf{c}^\I = \begin{bmatrix}  
                   0\\
   0.871733043016918\\
   1\\
   1
\end{bmatrix}\,. 
\]   
The explicit method has $\mathbf{b}^\E=\mathbf{b}^\I$, $\mathbf{c}^\E=\mathbf{c}^\I$, and
\[
\mathbf{A}^\E = \begin{bmatrix}                   
0         &          0          &         0 & 0 \\
   0.871733043016918     &              0            &       0 & 0 \\
   1            &       0             &      0 & 0 \\
   0.5   & 0.916993298352020 & -0.416993298352020 & 0 
\end{bmatrix}\,.
\]

\paragraph{Example: an order four IMEX method} 
The implicit part is Kvaerno's five stages, order four method \cite[ESDIRK 4/3]{Kvaerno_2004_ESDIRK}:
\[
\begin{array}{rcrrcr}
\mathbf{A}^\I_{2,2} &=& \mathbf{A}^\I_{3,3} = \mathbf{A}^\I_{4,4} = \mathbf{A}^\I_{5,5} &=& 0.572816062482134 \\
\mathbf{A}^\I_{2,1}  &=&   0.572816062482134, \quad
\mathbf{A}^\I_{3,1}  &=&   0.167235462027210, \\
\mathbf{A}^\I_{3,2}  &=&   -0.142946536857034, \quad
\mathbf{A}^\I_{4,1}  &=&    0.262603290252694, \\
\mathbf{A}^\I_{4,2}  &=&  -0.311904327420564, \quad
\mathbf{A}^\I_{4,3}  &=&   0.476484974685735, \\
\mathbf{A}^\I_{5,1}  &=&   0.197216548312835, \quad
\mathbf{A}^\I_{5,2}  &=&   0.176843783906372, \\
\mathbf{A}^\I_{5,3}  &=&   0.815442181350836, \quad
\mathbf{A}^\I_{5,4}  &=&  -0.762318576052177, \\
\end{array}
\]
with
\[
\mathbf{b} = \left[\begin{array}{r}     
       0.197216548312835 \\
   0.176843783906372 \\
   0.815442181350836 \\
  -0.762318576052177 \\
   0.572816062482134
 \end{array}\right]  \,, \quad
\mathbf{c} = \begin{bmatrix}     
    0 \\
   1.145632124964268 \\
   0.597104987652310 \\
   1 \\
   1
 \end{bmatrix}  \,,
\]
together with the explicit method
\[
\begin{array}{rcrrcr}
\mathbf{A}^\E_{2,1}  &=&   1.145632124964268, \quad
\mathbf{A}^\E_{3,1}  &=&      0.486402211775915, \\
\mathbf{A}^\E_{3,2}  &=&      0.110702775876395, \quad
\mathbf{A}^\E_{4,1}  &=&      0.527357281908146, \\
\mathbf{A}^\E_{4,2}  &=&     -0.234882275336215, \quad
\mathbf{A}^\E_{4,3}  &=&      0.707524993428070, \\
\mathbf{A}^\E_{5,1}  &=&                    0, \quad
\mathbf{A}^\E_{5,2}  &=&   -0.515140880433405, \\
\mathbf{A}^\E_{5,3}  &=&    1.515140880433405, \quad
\mathbf{A}^\E_{5,4}  &=&                    0\,.
\end{array}
\]
All other coefficients are zero.
The above form a transposed-classical  IMEX RK method of order four.

\subsection{Prothero-Robinson analysis}\label{sec:PR}

We consider the Prothero-Robinson (PR) \cite{Prothero_1974_PR} test problem written as a split system \eqref{eqn:imex-ode}
%
%
\begin{equation}
\begin{bmatrix} y \\  t \end{bmatrix}' = \underbrace{ \begin{bmatrix} \mu\, (y - \phi(t))  \\ 0  \end{bmatrix} }_{g(t,y)} + \underbrace{  \begin{bmatrix}  \phi'(t) \\ 1 \end{bmatrix} }_{f(t,y)} ~, \quad \mu < 0~, \quad y(0)=\phi(0)\,,
\label{Prothero-Robinson}
\end{equation}
where the exact solution is $y(t)=\phi(t)$. An IMEX-GARK method \eqref{eqn:imexRK} is PR-convergent with order $p$
if its application to (\ref{Prothero-Robinson}) gives a solution whose global error decreases as $\mathcal{O}(h^p)$
for $h \rightarrow 0$ and $h\mu \rightarrow -\infty$.

\begin{theorem}[PR convergence of IMEX-GARK methods]
\label{thm:PR-convergence}
Consider the IMEX-GARK method \eqref{eqn:imexRK} of order $p$.
Assume that the implicit component has a nonsingular coefficient matrix $\mathbf{A}^\II$, and an implicit
transfer function stable at infinity, 
\[
R^\II(h \mu) =  \left( 1 +  h   \, \mu \, \mathbf{b}^\I\,^T \, \left( \mathbf{I} - h\,\mu\, \mathbf{A}^\II \right)^{-1}\, \one \right)\,,
\qquad |R^\II(- \infty)| \le 1\,.
\]
Assume also that the additional order conditions hold:
\begin{equation}
\label{eqn:PR-order-conditions}
k\,  \mathbf{b}^\I\,^T \,  \left( \mathbf{A}^\II \right)^{-1}\,  \mathbf{A}^\IE \left(\mathbf{c}^\EE\right)^{k-1} =  \mathbf{b}^\I\,^T \,\left(\mathbf{A}^\II \right)^{-1}\, \left(\mathbf{c}^\IE\right)^k 
\end{equation}
for $k=1,\dots,q$, $q \le p$. Then the IMEX-GARK method is PR-convergent with order:
\begin{itemize}
\item $q+1$ if $-1 \le R^\II(- \infty) \le 1$, and
\item $q$ if $R^\II(- \infty) = 1$.
\end{itemize} 
\end{theorem}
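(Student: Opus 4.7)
I would apply the IMEX-GARK method to (\ref{Prothero-Robinson}), derive a linear one-step error recurrence for $e_n = y_n - \phi(t_n)$, use condition (\ref{eqn:PR-order-conditions}) to control the stiff part in the limit $h\mu\to -\infty$, and then sum the recurrence to read off the two claimed orders.

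\emph{One-step recurrence.} Because the $t$-component of (\ref{Prothero-Robinson}) is integrated exactly, only the scalar $y$-component matters; moreover $f(Y_j) = \phi'(t_n + h c_j^\EE)$ depends only on the exact stage time $Y_j^{(2)}$, so the explicit stages never couple into the implicit stage system. With stage errors $\zeta_i = Z_i - \phi(t_n + h c_i^\IE)$, a Taylor expansion of $\phi$ about $t_n$ gives
\[
(\mathbf{I} - h\mu\mathbf{A}^\II)\,\zeta = e_n\,\one^{\I} + \delta, \qquad \delta = \sum_{k\ge 1}\frac{h^k}{k!}\phi^{(k)}(t_n)\,\psi_k,
\]
with $\psi_k := k\mathbf{A}^\IE(\mathbf{c}^\EE)^{k-1} - (\mathbf{c}^\IE)^k$ and $\psi_1 \equiv 0$ from $\mathbf{c}^\IE = \mathbf{A}^\IE\one$. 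Substituting $\zeta$ into (\ref{eqn:imex-sol}) and subtracting $\phi(t_{n+1})$ yields
\[
e_{n+1} = R^\II(h\mu)\,e_n + \tau_n, \qquad \tau_n = e_{\mathrm{cl}}(t_n) + h\mu\,\mathbf{b}^\I\,^T (\mathbf{I} - h\mu\mathbf{A}^\II)^{-1}\delta,
\]
where $e_{\mathrm{cl}}(t_n) = h\mathbf{b}^\E\,^T\phi'(t_n + h\mathbf{c}^\EE) - [\phi(t_{n+1}) - \phi(t_n)]$ is the pure quadrature error of the explicit submethod; since that submethod inherits order $p$ from the IMEX-GARK method (first remark after Theorem 1), $e_{\mathrm{cl}} = O(h^{p+1})$.

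\emph{Stiff limit via (\ref{eqn:PR-order-conditions}).} Write the stiff part of $\tau_n$ as $\sum_{k\ge 2}\frac{h^k}{k!}\phi^{(k)}(t_n)F_k(h\mu)$ with the rational function $F_k(z) := z\,\mathbf{b}^\I\,^T(\mathbf{I} - z\mathbf{A}^\II)^{-1}\psi_k$. Invertibility of $\mathbf{A}^\II$ gives the identity $z(\mathbf{I} - z\mathbf{A}^\II)^{-1} = -(\mathbf{I} - z^{-1}(\mathbf{A}^\II)^{-1})^{-1}(\mathbf{A}^\II)^{-1}$, whose Neumann expansion yields $F_k(z) = -\mathbf{b}^\I\,^T(\mathbf{A}^\II)^{-1}\psi_k + O(|z|^{-1})$ as $|z|\to\infty$. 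The hypothesis (\ref{eqn:PR-order-conditions}) is exactly $\mathbf{b}^\I\,^T(\mathbf{A}^\II)^{-1}\psi_k = 0$ for $k = 1,\dots,q$, so $F_k(h\mu)\to 0$ as $h\mu\to -\infty$ for these $k$. Stability at infinity together with invertibility of $\mathbf{A}^\II$ keeps all $F_k$ uniformly bounded on $(-\infty,0]$, whence in the PR limit $\tau_n = h^{q+1}\,\Psi(t_n) + o(h^{q+1})$ with a smooth $\Psi$ built from $\phi^{(q+1)}$.

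\emph{Accumulation and main obstacle.} Solving the recurrence gives $e_n = R^\II(h\mu)^n e_0 + \sum_{j=0}^{n-1} R^\II(h\mu)^{n-1-j}\tau_j$. Setting $r := R^\II(-\infty)$ with $|r|\le 1$: if $|r| < 1$, the geometric sum of smooth coefficients yields $O(h^{q+1})$; if $r = -1$, summation by parts converts the smoothness of $t\mapsto \Psi(t)$ into an extra factor of $h$ in the partial sums, again giving $O(h^{q+1})$; if $r = 1$, no cancellation is available and the sum is $n \cdot O(h^{q+1}) = O(h^q)$ over $n = O(h^{-1})$ steps. The most delicate point will be pinning down the uniform $O(|h\mu|^{-1})$ tail bound on $F_k(h\mu) - F_k(-\infty)$ so that the three summation regimes become rigorous, and, in the borderline case $r = -1$, cleanly extracting the promised gain of one factor of $h$ from the discrete smoothness of $\Psi$ via the summation-by-parts step.
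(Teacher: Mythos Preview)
Your approach is essentially the same as the paper's: apply the scheme to the scalar PR problem, derive the stage-error equation $(\mathbf{I}-h\mu\mathbf{A}^\II)\zeta=e_n\one+\delta$ with the Taylor residual $\delta$ built from the vectors $\psi_k=k\,\mathbf{A}^\IE(\mathbf{c}^\EE)^{k-1}-(\mathbf{c}^\IE)^k$, insert into the solution update to obtain $e_{n+1}=R^\II(h\mu)\,e_n+\tau_n$, and then pass to the limit $h\mu\to-\infty$ using $z(\mathbf{I}-z\mathbf{A}^\II)^{-1}\to-(\mathbf{A}^\II)^{-1}$ so that the assumption \eqref{eqn:PR-order-conditions} kills the first $q$ stiff terms. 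The paper carries out exactly this computation (with identical $\psi_k$, there written as $k\,\mathbf{A}^\IE(\mathbf{d}^\E)^{k-1}-(\mathbf{d}^\I)^k$) and arrives at the recurrence $e_{n+1}=R^\II(-\infty)\,e_n+\mathcal{O}(h^{\min\{q+1,p+1\}})$, then simply states ``this gives the desired result'' without spelling out the accumulation. Your proposal therefore matches the paper's argument but is more explicit on the last step: you treat the three regimes $|r|<1$, $r=-1$, and $r=1$ separately, and in particular recover the order $q+1$ at $r=-1$ by a summation-by-parts (alternating-sum) argument exploiting the smoothness of $t\mapsto\Psi(t)$, a point the paper leaves implicit.
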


\begin{proof}
%

The method \eqref{eqn:imexRK} applied to the scalar equation (\ref{Prothero-Robinson}) reads
\begin{subequations}
\label{eqn:imex-on-PR}
\begin{eqnarray}
\label{eqn:imex-PR-explicit-stage}
Y &=& y_{n} \, \one + h  \,  \mathbf{A}^\EE \, \phi'^\E   + h  \, \mu \, \mathbf{A}^\EI \, \left( Z - \phi^\I \right) \,, \\
\label{eqn:imex-PR-implicit-stage}
Z  &=& y_{n}\, \one  + h  \,  \mathbf{A}^\IE \, \, \phi'^\E   + h  \, \mu \, \mathbf{A}^\II \,   \left( Z - \phi^\I \right) \,, \\
\label{eqn:imex-sol2}
y_{n+1} &=& y_n + h \, \mathbf{b}^\E\,^T \,\, \phi'^\E  + h   \, \mu \, \mathbf{b}^\I\,^T \,  \left( Z - \phi^\I \right) \,.
\end{eqnarray}
\end{subequations}
Here 
\begin{eqnarray*}
\phi^\E &=&  \phi\left(t_{n-1} + \mathbf{d}^\E\, h \right) = \left[  \phi(t_{n-1} +d_1^\E\, h ),\ldots,\phi(t_{n-1} + d_{s^\E}^\E\, h ) \right]^T\,, \\
\phi^\I &=&  \phi\left(t_{n-1} + \mathbf{d}^\I\, h \right) = \left[  \phi(t_{n-1} +d_1^\I\, h ),\ldots,\phi(t_{n-1} + d_{s^\I}^\I\, h ) \right]^T\,, \\
\end{eqnarray*}
where $\mathbf{d}^\E$, $\mathbf{d}^\I$ are the stage approximation times.  Due to the structure of the test problem \eqref{Prothero-Robinson},
the method \eqref{eqn:imexRK} uses an explicit approach for the time variable, therefore
\begin{equation}
\label{eqn:d-values}
\mathbf{d}^\E = \mathbf{c}^\EE\,, \quad \mathbf{d}^\I = \mathbf{c}^\IE\,.
\end{equation}
The exact solution is expanded in Taylor series about $t_{n}$:
\begin{equation}
\label{eqn:phi-taylor}
\begin{array}{rcl}
\phi\left(t_{n} + \mathbf{d}\, h \right)-\one\,\phi(t_{n}) &=& \displaystyle \sum_{k=1}^\infty \frac{h^k \mathbf{d}^k }{k!}\phi^{(k)}(t_{n}) \,,\\
h\, \phi'\left(t_{n} + \mathbf{d}\, h \right) &=& \displaystyle  \sum_{k=1}^\infty \frac{k h^k \mathbf{d}^{k-1} }{k!}\phi^{(k)}(t_{n}) \,,
\end{array}
\end{equation}
where the vector power $\mathbf{d}^{k}$ is taken componentwise.

Consider the global errors
\begin{eqnarray*}
&& e_n = y_n - \phi(t_n)~, \quad E_Y = Y - \phi^\E, \quad E_Z = Z - \phi^\I\,.
\end{eqnarray*}
%
%
Write the stage equation \eqref{eqn:imex-PR-implicit-stage} in terms of the exact solution and global errors,
and use the Taylor expansions \eqref{eqn:phi-taylor} to obtain
\begin{eqnarray*}
\left( \mathbf{I} - h\,\mu\, \mathbf{A}^\II \right)\, E_Z  &=& e_{n} \, \one + \phi(t_n) \, \one  - \phi^\I  + h  \,  \mathbf{A}^\IE \, \, \phi'^\E  \\
 &=& e_{n} \, \one + \sum_{k=1}^\infty \left( k\, \mathbf{A}^\IE \left(\mathbf{d}^\E\right)^{k-1} - \left(\mathbf{d}^\I\right)^k  \right) \frac{h^k}{k!}\phi^{(k)}(t_{n})\,.
\end{eqnarray*}
%
%
Similarly, write the solution equation (\ref{eqn:imex-sol2}) in terms of the exact solution and global errors:
\begin{eqnarray*}
e_{n+1} &=& e_n  + \phi(t_n) - \phi(t_{n+1})+ h \, \mathbf{b}^\E\,^T \,\, \phi'^\E  + h   \, \mu \, \mathbf{b}^\I\,^T \, E_Z \\
&=& R^\II(h \mu) \, e_{n} + \sum_{k=1}^\infty \left(k\,  \mathbf{b}^\E\,^T\left( \mathbf{d}^\E \right)^{k-1}-1\right) \frac{h^k \,  }{k!}\phi^{(k)}(t_{n}) \\
&& + h   \, \mu \, \mathbf{b}^\I\,^T \, \left( \mathbf{I} - h\,\mu\, \mathbf{A}^\II \right)^{-1}\cdot \\
&& \quad \cdot\sum_{k=1}^\infty \left( k\, \mathbf{A}^\IE \left(\mathbf{d}^\E\right)^{k-1}   
-\left(\mathbf{d}^\I\right)^k  \right) \frac{h^k}{k!}\phi^{(k)}(t_{n})\,,
\end{eqnarray*}
where the stability function of the implicit component method is
\[
R^\II(h \mu) =  \left( 1 +  h   \, \mu \, \mathbf{b}^\I\,^T \, \left( \mathbf{I} - h\,\mu\, \mathbf{A}^\II \right)^{-1}\, \one \right)\,.
\]
Since the explicit component method (by itself) has at least order $p$, it follows from \eqref{eqn:d-values} and the explicit order conditions that
\begin{equation}
\label{eqn:PR-explicit-order-cancellation}
k \cdot \left( \mathbf{b}^\E\right)^T\left( \mathbf{d}^\E \right)^{k-1}-1 = 0 \quad \mbox{for}~~ k = 1,\dots,p\,.
\end{equation}
Consequently, the global error recurrence reads
\begin{eqnarray*}
e_{n+1} &=& R^\II(h \mu)\, e_n  + \mathcal{O}\left( h^{p+1} \right)  + \sum_{k=1}^\infty r_k(h \mu)\, \frac{h^k \,  }{k!}\phi^{(k)}(t_{n})\,, \\
 r_k(h \mu) &=&  h   \, \mu \, \mathbf{b}^\I\,^T \, \left( \mathbf{I} - h\,\mu\, \mathbf{A}^\II \right)^{-1}\,
 \left( k\, \mathbf{A}^\IE \left(\mathbf{d}^\E\right)^{k-1} - \left(\mathbf{d}^\I\right)^k  \right)\,.
\end{eqnarray*}
For $h\mu \to -\infty$
\begin{eqnarray*}
 r_k(\infty) &=&  - \mathbf{b}^\I\,^T \, \left(  \mathbf{A}^\II \right)^{-1}\, \left( k\, \mathbf{A}^\IE \left(\mathbf{d}^\E\right)^{k-1} - \left(\mathbf{d}^\I\right)^k  \right)\,.
\end{eqnarray*}

If conditions \eqref{eqn:PR-order-conditions} hold we infer from \eqref{eqn:d-values} that $r_k(\infty) = 0$ for $k=1,\dots,q$.
The global error recurrence reads
\begin{eqnarray*}
e_{n+1} &=& R^\II(\infty)\, e_n  + \mathcal{O}\left( h^{\min\{q+1,p+1\}} \right) \,.
\end{eqnarray*}
This gives the desired result.
%
\end{proof}

\begin{remark}
If one defines the test problem \eqref{Prothero-Robinson} such that the time is an implicit variable then \eqref{eqn:d-values} becomes
\begin{equation}
\label{eqn:PR-implicit-time}
\mathbf{d}^\E = \mathbf{c}^\EI\,, \quad \mathbf{d}^\I = \mathbf{c}^\II\,.
\end{equation}
Equation \eqref{eqn:PR-explicit-order-cancellation} is satisfied automatically by the IMEX coupling conditions of order $p$.
The additional order conditions \eqref{eqn:PR-order-conditions} become
\begin{equation}
\label{eqn:alternatice-PR-conditions}
k\,  \mathbf{b}^\I\,^T \,  \left( \mathbf{A}^\II \right)^{-1}\,  \mathbf{A}^\IE \left(\mathbf{c}^\EI\right)^{k-1} =  \mathbf{b}^\I\,^T \,\left(\mathbf{A}^\II \right)^{-1}\, \left(\mathbf{c}^\II\right)^k \,.
\end{equation}
\end{remark}

\subsection{Stiff semi-linear analysis}\label{sec:SL}

Consider now the semi-linear (SL) problem
\begin{equation}
y' = \underbrace{ \mu\, y  }_{g(y)} + f(y) ~, \quad \mu < 0~, \quad y(0)=y_0~,
\label{semi-linear}
\end{equation}
where $f(y(t))$ is smooth and non-stiff. An IMEX-GARK method \eqref{eqn:imexRK} is SL-convergent with order $p$
if its application to (\ref{semi-linear}) gives a solution whose global error decreases as $\mathcal{O}(h^p)$
for $h \rightarrow 0$ and $h\mu \rightarrow -\infty$.

\begin{theorem}[SL convergence of IMEX-GARK methods]
\label{thm:SL-convergence}
Consider an internally consistent IMEX-GARK method \eqref{eqn:imexRK} of order $p$.
Assume that the implicit component has a nonsingular coefficient matrix $\mathbf{A}^\II$, and an implicit
transfer function strictly stable at infinity, 
\[
R^\II(h \mu) =  \left( 1 +  h   \, \mu \, \mathbf{b}^\I\,^T \, \left( \mathbf{I} - h\,\mu\, \mathbf{A}^\II \right)^{-1}\, \one \right)\,,
\quad
\left|R^\II(- \infty)\right| < 1\,.
\]
Assume also that the additional order conditions hold:
\begin{subequations}
\label{eqn:SL-order-conditions}
\begin{eqnarray}
\mathbf{b}^\I\,^T  \, \mathbf{A}^\II\,^{-1} \mathbf{c}^\I\,^k & =& 1 \,, \quad k=1,\dots,q\,, \\
\mathbf{b}^\I\,^T  \, \mathbf{A}^\II\,^{-1}  \mathbf{A}^\IE \mathbf{c}^\E\,^{k-1}  &=& \frac{1}{k}\,,
\quad k=1,\dots,q\,,
\end{eqnarray}
\end{subequations}
for $q \le p$. Then the IMEX-GARK method is SL-convergent with order $q+1$.
\end{theorem}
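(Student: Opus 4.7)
The plan is to parallel the proof of Theorem~\ref{thm:PR-convergence}, with the autonomous forcing $\phi'(t)$ replaced by the state-dependent source $f(y(t))$ via the identity $f(y(t)) = y'(t) - \mu\,y(t)$ that holds along the exact trajectory. This lets me write $hF(\tilde Y) = h\,y'(t_n + \mathbf c^\E h) - h\mu\,\tilde Y$ on exact stage values, which mirrors the PR relation. I would apply \eqref{eqn:imexRK} to \eqref{semi-linear}, introduce global errors $e_n = y_n - y(t_n)$, $E_Y = Y - \tilde Y$, $E_Z = Z - \tilde Z$ with $\tilde Y_i = y(t_n + c_i^\E h)$, $\tilde Z_i = y(t_n + c_i^\I h)$ (well-defined by internal consistency \eqref{eqn:simplifying-assumption-c}), and use the mean-value theorem to write $F(Y) - F(\tilde Y) = J_Y E_Y$ with $J_Y$ uniformly bounded, so the nonlinearity of $f$ will only produce a higher-order perturbation that is absorbed by a bootstrap on $\|E_Y\|,\|E_Z\|$.

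Plugging the exact trajectory into the scheme and Taylor-expanding about $t_n$, internal consistency annihilates the $k=0$ contributions, and the implicit stage residual decomposes as $\delta_Z = \delta_Z^{(0)} + h\mu\,\delta_Z^{(1)}$ with
\[
\delta_Z^{(0)} = \sum_{k\ge 1} \frac{h^k}{k!}\bigl[(\mathbf c^\I)^k - k\,\mathbf A^\IE(\mathbf c^\E)^{k-1}\bigr]\,y^{(k)}(t_n),
\]
\[
\delta_Z^{(1)} = \sum_{k\ge 1} \frac{h^k}{k!}\bigl[\mathbf A^\IE(\mathbf c^\E)^k - \mathbf A^\II(\mathbf c^\I)^k\bigr]\,y^{(k)}(t_n),
\]
and analogously for $\delta_y$. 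Solving the implicit stage error equation via $(\mathbf I - h\mu\,\mathbf A^\II)^{-1}$ and substituting into the solution update yields
\[
e_{n+1} = R^\II(h\mu)\,e_n - h\mu\,\mathbf b^\I\,^T(\mathbf I - h\mu\mathbf A^\II)^{-1}\delta_Z - \delta_y + h\,\mathcal E(h\mu)\,J_Y E_Y,
\]
with $\mathcal E(h\mu)$ uniformly bounded for $h\mu \le 0$ by the stability of the implicit method.

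The decisive step is the stiff limit. Using $h\mu(\mathbf I - h\mu\mathbf A^\II)^{-1} = (\mathbf A^\II)^{-1}[(\mathbf I - h\mu\mathbf A^\II)^{-1} - \mathbf I]$ I would extract $h\mu\,\mathbf b^\I\,^T(\mathbf I - h\mu\mathbf A^\II)^{-1} \to -\mathbf b^\I\,^T(\mathbf A^\II)^{-1}$, while iterating the identity once controls the $(h\mu)^2$ piece acting on $\delta_Z^{(1)}$. The first SL assumption $\mathbf b^\I\,^T(\mathbf A^\II)^{-1}(\mathbf c^\I)^k = 1$ paired with the second $k\,\mathbf b^\I\,^T(\mathbf A^\II)^{-1}\mathbf A^\IE(\mathbf c^\E)^{k-1} = 1$ makes every Taylor coefficient of $\mathbf b^\I\,^T(\mathbf A^\II)^{-1}\delta_Z^{(0)}$ vanish for $k = 1,\dots,q$; re-indexing the second assumption and combining it with the bushy-tree identities $\mathbf b^\I\,^T(\mathbf c^\I)^k = 1/(k+1)$ (valid for $k\le p-1$ from the implicit method's classical order $p\ge q$) annihilates $\mathbf b^\I\,^T(\mathbf A^\II)^{-1}\delta_Z^{(1)}$ to the corresponding order, and identical manipulations reduce $\delta_y$ to $\mathcal O(h^{q+1})$. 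Strict stability $|R^\II(-\infty)|<1$ then yields $|R^\II(h\mu)| \le \kappa < 1$ for $|h\mu|$ large, so a discrete Gronwall argument applied to $e_{n+1} = R^\II(h\mu)\,e_n + \mathcal O(h^{q+1})$ (after closing the bootstrap $\|E_Y\|,\|E_Z\| = \mathcal O(h^{q+1})$ via the stage equations) delivers $|e_n| = \mathcal O(h^{q+1})$. The main obstacle will be to justify the $(h\mu)^2\,\delta_Z^{(1)}$ cancellation uniformly in $h\mu$: unlike PR, the identity $F(\tilde Y) = y' - \mu\tilde Y$ introduces a genuinely stiff contribution to the stage residual, and taming it requires combining the new SL conditions with the classical bushy-tree identities of the implicit component, which is precisely what distinguishes SL from PR convergence.
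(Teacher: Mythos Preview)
Your overall strategy mirrors the paper's, but your choice to substitute $f(\tilde Y)=y'-\mu\tilde Y$ on the explicit argument introduces an explicit $h\mu$ factor into the residuals, $\delta_Z=\delta_Z^{(0)}+h\mu\,\delta_Z^{(1)}$ and likewise for $\delta_y$. The error recurrence then contains $-(h\mu)^2\,\mathbf{b}^\I\,^T(\mathbf{I}-h\mu\,\mathbf{A}^\II)^{-1}\delta_Z^{(1)}$, and iterating the resolvent identity once, as you propose, leaves the remainder $h\mu\cdot\mathbf{b}^\I\,^T(\mathbf{A}^\II)^{-1}\delta_Z^{(1)}$. The SL conditions together with the bushy-tree identities make $\mathbf{b}^\I\,^T(\mathbf{A}^\II)^{-1}\delta_Z^{(1)}=\mathcal O(h^q)$, not zero: the $k=q$ Taylor coefficient $\mathbf{b}^\I\,^T(\mathbf{A}^\II)^{-1}\mathbf{A}^\IE(\mathbf{c}^\E)^q-\mathbf{b}^\I\,^T(\mathbf{c}^\I)^q$ lies outside the range of \eqref{eqn:SL-order-conditions}. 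Multiplying by $h\mu\to-\infty$ therefore does not give a uniformly bounded local error, and the companion term $-h\mu\,\delta_y^{(1)}$ fails to cancel it (their difference is again only $\mathcal O(h^q)$, with surviving coefficient $\mathbf{b}^\I\,^T(\mathbf{A}^\II)^{-1}\mathbf{A}^\IE(\mathbf{c}^\E)^q-\mathbf{b}^\E\,^T(\mathbf{c}^\E)^q$). Your decomposition defers the stiff difficulty rather than removing it.

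The paper sidesteps this by substituting the ODE on the \emph{implicit} argument instead: it replaces $h\mu\,\mathbf{A}^\II\, y(t_n+\mathbf{c}^\I h)$ by $\mathbf{A}^\II\bigl(h\,y'(t_n+\mathbf{c}^\I h)-h\,f(y(t_n+\mathbf{c}^\I h))\bigr)$, which expresses $\delta^\I$ as two sums over $y^{(k)}(t_n)$ and $f^{(k)}(t_n)$ with \emph{no} explicit $h\mu$. Then the only stiff factor in the recurrence is the single bounded limit $h\mu\,\mathbf{b}^\I\,^T(\mathbf{I}-h\mu\,\mathbf{A}^\II)^{-1}\to-\mathbf{b}^\I\,^T(\mathbf{A}^\II)^{-1}$, and the two SL conditions act directly on the two sums (condition (a) kills the $y^{(k)}$-coefficients, condition (b) shifted by one the $f^{(k)}$-coefficients) to give $\mathbf{b}^\I\,^T(\mathbf{A}^\II)^{-1}\delta^\I=\mathcal O(h^{q+1})$ without any $(h\mu)^2$ obstruction.
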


\begin{proof}
The exact solution is expanded in Taylor series about $t_{n}$:
\begin{eqnarray*}
y \left(t_{n} + \mathbf{c}\, h \right)-\one\,y(t_{n}) &=& \sum_{k=1}^\infty \frac{h^k \mathbf{c}^k }{k!} y^{(k)}(t_{n}) \,,\\
h\, y'\left(t_{n} + \mathbf{c}\, h \right) &=& \sum_{k=1}^\infty \frac{k h^k \mathbf{c}^{k-1} }{k!} y^{(k)}(t_{n}) \,.
\end{eqnarray*}
Application of the method \eqref{eqn:imexRK} to problem \eqref{semi-linear} gives
\begin{eqnarray}
\nonumber
Y &=& y_{n} \, \one + h  \,  \mathbf{A}^\EE \, f(Y)   + h  \, \mu \, \mathbf{A}^\EI \,  Z \,, \\
\label{eqn:IMEX-on-SL}
Z  &=& y_{n}\, \one  + h  \,  \mathbf{A}^\IE \, f(Y)   + h  \, \mu \, \mathbf{A}^\II \,  Z \,, \\
\nonumber
y_{n+1} &=& y_n + h \, \mathbf{b}^\E\,^T\, f(Y)  + h   \, \mu \, \mathbf{b}^\I\,^T \, Z \,.
\end{eqnarray}
Insert the exact solutions in the numerical scheme \eqref{eqn:IMEX-on-SL} to obtain
\begin{eqnarray}
\nonumber
y(t_n^\E) &=&  y(t_n) \, \one + h  \,  \mathbf{A}^\EE \, f\left(y(t_n^\E)\right)   + h  \, \mu \, \mathbf{A}^\EI \,  y(t_n^\I) + \delta^\E\,, \\
\label{eqn:IMEX-on-SL-exact}
\quad y(t_n^\I)  &=&  y(t_n)\, \one  + h  \,  \mathbf{A}^\IE \, f\left(y(t_n^\E)\right)   + h  \, \mu \, \mathbf{A}^\II \,  y(t_n^\I) + \delta^\I \,, \\
\nonumber
y(t_{n+1}) &=& y(t_n) + h \, \mathbf{b}^\E\,^T \,\, f\left(y(t_n^\E)\right)  + h   \, \mu \, \mathbf{b}^\I\,^T \, y(t_n^\I) +\delta\,.
\end{eqnarray}
where $t_n^\E = t_n+\mathbf{c}^\E h$ and $t_n^\I = t_n+\mathbf{c}^\I h$.
The exact solutions satisfy the numerical scheme \eqref{eqn:IMEX-on-SL} only approximately.
The residuals are as follows:
%
%
\begin{eqnarray*}
\delta^\I &=&
y(t+\mathbf{c}^\I h)  -  y(t_n)\, \one  - h  \,  \mathbf{A}^\IE \, \, f(y(t+\mathbf{c}^\E h))   \\
&& + h \mathbf{A}^\II \,f(y(t+\mathbf{c}^\I h))    -  \mathbf{A}^\II \,  h y'(t+\mathbf{c}^\I h) \\ 
&=& \sum_{k=1}^\infty \left(\mathbf{c}^\I\,^k - k  \mathbf{A}^\II \mathbf{c}^\I\,^{k-1} \right)\, \frac{h^k  }{k!}y^{(k)}(t_{n}) \\
&& + h \sum_{k=0}^\infty \left(  \mathbf{A}^\II \mathbf{c}^\I\,^k - \mathbf{A}^\IE \mathbf{c}^\E\,^k \right) \frac{h^k }{k!}f^{(k)}(t_{n})
\end{eqnarray*}
%
%
For the solution equation we have that
\begin{eqnarray*}
\delta &=& y(t_{n+1}) - y(t_n) - h \, \mathbf{b}^\E\,^T \,\, f(y(t+\mathbf{c}^\E h))  \\
&& + \mathbf{b}^\I\,^T \, h f(y(t+\mathbf{c}^\I h)) - \mathbf{b}^\I\,^T \, h y'(t+\mathbf{c}^\I h) \\
&=& \sum_{k=1}^\infty \left(1 - k  \mathbf{b}^\I\,^T (\mathbf{c}^\I)^{k-1} \right) \frac{h^k}{k!}y^{(k)}(t_{n}) \\
&&  -h \mathbf{b}^\E\,^T \one\,f(t_{n}) - h\, \sum_{k=1}^\infty \frac{h^k \mathbf{b}^\E\,^T \mathbf{c}^\E\,^k }{k!}f^{(k)}(t_{n}) \\
&&  + h \mathbf{b}^\I\,^T\one\,f(t_{n}) + h\, \sum_{k=1}^\infty \frac{h^k \mathbf{b}^\I\, ^T \mathbf{c}^\I\, ^k }{k!}f^{(k)}(t_{n}) \\
&=& \sum_{k=1}^\infty \left(1 - k  \mathbf{b}^\I\,^T (\mathbf{c}^\I)^{k-1} \right) \frac{h^k}{k!}y^{(k)}(t_{n}) \\
&&  + h\, \sum_{k=1}^\infty \left( \mathbf{b}^\I\,^T \mathbf{c}^\I\,^k-\mathbf{b}^\E\,^T \mathbf{c}^\E\,^k \right)\, \frac{h^k }{k!}f^{(k)}(t_{n}) \\
&=& \mathcal{O}\left( h^{p+1} \right)
\end{eqnarray*}
where the last equality follows from the  order $p$ conditions for the implicit and explicit components.
%

Consider the global errors
\begin{eqnarray*}
&& e_n = y_n - y(t_n)~, \quad E_Y = Y - y(t_n+\mathbf{c}^\E h), \quad E_Z = Z - y(t_n+\mathbf{c}^\I h)\,.
\end{eqnarray*}
Relations for these errors are obtained by subtracting \eqref{eqn:IMEX-on-SL-exact} from \eqref{eqn:IMEX-on-SL}:
\begin{eqnarray}
\nonumber
E_Y &=& e_{n} \, \one + h  \,  \mathbf{A}^\EE \, \Delta f^\E  + h  \, \mu \, \mathbf{A}^\EI \,  E_Z + \delta^\E\,, \\
 \label{eqn:IMEX-on-SL-error}
 E_Z  &=& e_{n}\, \one  + h  \,  \mathbf{A}^\IE \, \, \Delta f^\E + h  \, \mu \, \mathbf{A}^\II \, E_Z + \delta^\I \,, \\
 \nonumber
e_{n+1} &=& e_n + h \, \mathbf{b}^\E\,^T \,\Delta f^\E + h   \, \mu \, \mathbf{b}^\I\,^T \, E_Z + \delta \,.
\end{eqnarray}
Using the mean function theorem,
\begin{eqnarray*}
\Delta f^\E &=&  f(Y)  -  f(y(t+\mathbf{c}^\E h)) = \lambda\, E_Y\quad \textnormal{where} ~~\lambda = f_y(\cdot)
\end{eqnarray*}
%
%
%
the error equations \eqref{eqn:IMEX-on-SL-error} become
\begin{subequations}
\begin{eqnarray}
 \label{eqn:IMEX-on-SL-error-a}
\begin{bmatrix}
\mathbf{I}- h \lambda \, \mathbf{A}^\EE  & - h  \, \mu \, \mathbf{A}^\EI \\
-  h \lambda \,  \mathbf{A}^\IE & \mathbf{I}- h\mu \, \mathbf{A}^\II 
\end{bmatrix}
\,
\begin{bmatrix}
E_Y \\ E_Z
\end{bmatrix}
&=&
e_{n} \, \one + \begin{bmatrix} \delta^\E \\ \delta^\I  \end{bmatrix} \,, 
\end{eqnarray}
\begin{eqnarray}
\nonumber
e_{n+1} &=& 
\left( 1 + 
\begin{bmatrix} h \lambda\, \mathbf{b}^\E\,^T ~~  h   \, \mu \, \mathbf{b}^\I\,^T \end{bmatrix}
\begin{bmatrix}
\mathbf{I}- h \lambda \, \mathbf{A}^\EE  & - h  \, \mu \, \mathbf{A}^\EI \\
-  h \lambda \,  \mathbf{A}^\IE & \mathbf{I}- h\mu \, \mathbf{A}^\II 
\end{bmatrix}^{-1} \, \one \right)\, e_n \\
 \label{eqn:IMEX-on-SL-error-b}
&& + 
\begin{bmatrix} h \lambda\, \mathbf{b}^\E\,^T ~~  h   \, \mu \, \mathbf{b}^\I\,^T \end{bmatrix}
\begin{bmatrix}
\mathbf{I}- h \lambda \, \mathbf{A}^\EE  & - h  \, \mu \, \mathbf{A}^\EI \\
-  h \lambda \,  \mathbf{A}^\IE & \mathbf{I}- h\mu \, \mathbf{A}^\II 
\end{bmatrix}^{-1}  \begin{bmatrix} \delta^\E \\ \delta^\I  \end{bmatrix} 
 + \delta \,.
\end{eqnarray}
\end{subequations}
Inserting  \eqref{eqn:IMEX-on-SL-error-b} into  \eqref{eqn:IMEX-on-SL-error-b} and rescaling gives 
\begin{eqnarray*}
e_{n+1} &=& 
\left( 1 + 
\begin{bmatrix} h\lambda \mathbf{b}^\E\,^T ~~   \mathbf{b}^\I\,^T \end{bmatrix}
\begin{bmatrix}
 \mathbf{I} -  h\lambda\mathbf{A}^\EE  & - \mathbf{A}^\EI \\
-  h\lambda\mathbf{A}^\IE & \frac{1}{h\mu}  \mathbf{I}-  \mathbf{A}^\II 
\end{bmatrix}^{-1} \, \one \right)\, e_n \\
&& + 
\begin{bmatrix} h\lambda \mathbf{b}^\E\,^T ~~   \mathbf{b}^\I\,^T \end{bmatrix}
\begin{bmatrix}
 \mathbf{I} -  h\lambda\mathbf{A}^\EE  & - \mathbf{A}^\EI \\
-  h\lambda\mathbf{A}^\IE & \frac{1}{h\mu}  \mathbf{I}-  \mathbf{A}^\II 
\end{bmatrix}^{-1} \, \,  \begin{bmatrix} \delta^\E \\ \delta^\I  \end{bmatrix} 
 + \delta \,,
\end{eqnarray*}
which for $h\mu \to -\infty$ gives the following recurrence for the global error:
\begin{eqnarray}
\label{eqn:global-error-recurrence}
e_{n+1} &=& 
\left( 1 + 
\begin{bmatrix} h\lambda \mathbf{b}^\E\,^T ~~   \mathbf{b}^\I\,^T \end{bmatrix}
\begin{bmatrix}
 \mathbf{I} -  h\lambda\mathbf{A}^\EE  & - \mathbf{A}^\EI \\
-  h\lambda\mathbf{A}^\IE &  -  \mathbf{A}^\II 
\end{bmatrix}^{-1} \, \one \right)\, e_n \\
\nonumber
&& + 
\begin{bmatrix} h\lambda \mathbf{b}^\E\,^T ~~   \mathbf{b}^\I\,^T \end{bmatrix}
\begin{bmatrix}
 \mathbf{I} -  h\lambda\mathbf{A}^\EE  & - \mathbf{A}^\EI \\
-  h\lambda\mathbf{A}^\IE &  -  \mathbf{A}^\II 
\end{bmatrix}^{-1} \, \,  \begin{bmatrix} \delta^\E \\ \delta^\I  \end{bmatrix} 
 + \delta \,.
\end{eqnarray}
%
%
%
%
We have that
\begin{eqnarray*}
\lefteqn{  \mathbf{b}^\I\,^T  \, \left(\mathbf{A}^\II\right)^{-1}  \, \delta^\I } \\
&=& \sum_{k=1}^\infty \left(\mathbf{b}^\I\,^T  \, \mathbf{A}^\II\,^{-1} \mathbf{c}^\I\,^k - k  \mathbf{b}^\I\,^T  \,  \mathbf{c}^\I\,^{k-1} \right)\, \frac{h^k  }{k!}y^{(k)}(t_{n}) \\
&& + \sum_{k=0}^\infty \left( \mathbf{b}^\I\,^T  \mathbf{c}^\I\,^k - \mathbf{b}^\I\,^T  \, \mathbf{A}^\II\,^{-1} \mathbf{A}^\IE \mathbf{c}^\E\,^k \right) \frac{h^{k+1} }{k!}f^{(k)}(t_{n}) \\
&=& \sum_{k=1}^p \left(\mathbf{b}^\I\,^T  \, \mathbf{A}^\II\,^{-1} \mathbf{c}^\I\,^k - 1 \right)\, \frac{h^k  }{k!}y^{(k)}(t_{n}) \\
&& + \sum_{k=1}^{p} \left(  \frac{1}{k}- \mathbf{b}^\I\,^T  \, \mathbf{A}^\II\,^{-1} \mathbf{A}^\IE \mathbf{c}^\E\,^{k-1} \right) \frac{h^k }{(k-1)!}f^{(k-1)}(t_{n}) + \mathcal{O}\left(h^{p+1} \right) \\
&=&  \mathcal{O}\left(h^{\min(p+1,q+1)} \right)
\end{eqnarray*}
where the last equality follows from the additional order conditions \eqref{eqn:SL-order-conditions}.

Since
\begin{eqnarray*}
&& \begin{bmatrix} h\lambda \mathbf{b}^\E\,^T ~~   \mathbf{b}^\I\,^T \end{bmatrix}
 \cdot
 \begin{bmatrix}
 \mathbf{I} -  h\lambda\mathbf{A}^\EE  & - \mathbf{A}^\EI \\
-  h\lambda\mathbf{A}^\IE &  -  \mathbf{A}^\II 
\end{bmatrix}^{-1} \\
&&\quad =
\bigl( \mathbf{I} + \mathcal{O}(h) \bigr) \; \begin{bmatrix}
0 & ~~ -  \mathbf{b}^\I\,^T\, \mathbf{A}^\II\,^{-1} 
\end{bmatrix} \,,
\end{eqnarray*}
 the global error recurrence \eqref{eqn:global-error-recurrence} becomes
\[
e_{n+1} = \left( R^\II(- \infty) + \mathcal{O}(h) \right) \, e_n  + \mathcal{O}\left(h^{\min(p+1,q+1)} \right)
\]
which gives the desired result in the limit $h \to 0$.
%
\end{proof}

\subsection{Stiffly accurate GARK methods}

The following extension of the stiff accuracy concept \cite{Hairer_book_II} offers a convenient way to satisfy 
the additional order conditions \eqref{eqn:PR-order-conditions} and \eqref{eqn:SL-order-conditions}.

\begin{definition}[Stiffly accurate GARK methods.]
A GARK method \eqref{eqn:general-Butcher-tableau} is {\em stiffly accurate} if 
\begin{equation}
\label{eqn:stiff-accuracy}
\mathbf{e}_{s^{\{N\}}}^T\mathbf{A}^{\{N,m\}} =  \left( \mathbf{b}^{\{m\}} \right)^T\,, \quad m = 1,\dots,N\,. 
\end{equation}
\end{definition}
Note that stiff stability can be formulated with respect to any component method $q$ by 
replacing $N$ with $q$ in \eqref{eqn:stiff-accuracy}.

%
%
A stiffly accurate IMEX-GARK satisfies
\[
 \mathbf{e}_{s^\I}^T \, \mathbf{A}^\IE =  \left( \mathbf{b}^\E \right)^T\,, \quad
 \mathbf{e}_{s^\I}^T \, \mathbf{A}^\II =  \left( \mathbf{b}^\I \right)^T\,, \quad
 R^\II(\infty) = 0\,,
\]
and consequently
\[
\mathbf{b}^\I\,^T \,  \left( \mathbf{A}^\II \right)^{-1} =  \mathbf{e}_{s^\I}^T \,, \quad
\mathbf{c}^\IE_{s^\I} = 1\,.
\]
The Prothero-Robinson order conditions \eqref{eqn:PR-order-conditions} are equivalent to
\[
k\,  \mathbf{e}_{s^\I}^T \,  \mathbf{A}^\IE\, \left(\mathbf{c}^\EE\right)^{k-1} =   \mathbf{e}_{s^\I}^T \,\, \left(\mathbf{c}^\IE\right)^k
 \,, \quad k=1,\dots,q\,,
\]
and therefore to
\[
\left( \mathbf{b}^\E \right)^T \, \left(\mathbf{c}^\EE\right)^{k-1} =  \frac{1}{k} \,, \quad k=1,\dots,q\,.
\]
The conditions are automatically satisfied for $q=p$ as they are part of the explicit
component order conditions.
%

For a stiffly accurate IMEX method applied to Prothero-Robinson with implicit time \eqref{eqn:PR-implicit-time} 
the order conditions \eqref{eqn:alternatice-PR-conditions} are equivalent to
\[
\mathbf{b}^\E\,^T \,  \left(\mathbf{c}^\EI\right)^{k-1} = \frac{1}{k} \,,
\]
and is satisfied automatically for $k=1,\dots,p$ due to the IMEX coupling conditions of order $p$.
Thus a stiffly accurate method is PR-convergent with order $p+1$ regardless of the form of the test problem \eqref{Prothero-Robinson}.

For a stiffly accurate IMEX-GARK the semi-linear oder conditions \eqref{eqn:SL-order-conditions} read
\begin{eqnarray*}
\mathbf{e}_{s^\I}^T \,  \mathbf{c}^\I\,^k & =& 1 \,,  \\
\mathbf{e}_{s^\I}^T \,   \mathbf{A}^\IE \mathbf{c}^\E\,^{k-1}  &=& \mathbf{b}^\E\,^T \,   \mathbf{c}^\E\,^{k-1} 
= \frac{1}{k}
\,, 
\end{eqnarray*}
and are satisfied automatically through the explicit order conditions.

\section{Stability and monotonicity}\label{sec:stability}
In this section a stability and monotonicity analysis is performed. We derive a linear stability theory, as well as nonlinear stability theories for both dispersive and coercive problems.

\subsection{Linear stability analysis}
%
We apply the GARK scheme \eqref{eqn:GARK} to the linear scalar test problem
\[
y' = \sum_{m=1}^N\, \lambda^{\{m\}}\, y\,.
\]
With $z^{\{m\}}=h\, \lambda^{\{m\}}$ one obtains
\begin{eqnarray*}
Y_i^{\{q\}} &=& y_{n} + \sum_{m=1}^N \sum_{j=1}^{s^{\{m\}}} a_{i,j}^{\{q,m\}} \, z^{\{m\}}\, Y_j^{\{m\}}, \quad q=1,\ldots,N  \\
y_{n+1} &=& y_n +  \sum_{q=1}^N\, \sum_{i=1}^{s^{\{q\}}} b_{i}^{\{q\}} \, z^{\{q\}}\, Y_i^{\{q\}} \,.
\end{eqnarray*}

Denote
\begin{equation}
\label{eqn:tilde-notation}
s = \sum_{q=1}^N s^{\{q\}}\,, \quad
\mathbf{A} = \begin{bmatrix}
 \mathbf{A}^{\{1,1\}} & \dots & \mathbf{A}^{\{1,N\}}  \\
&\ddots& \\
\mathbf{A}^{\{N,1\}}  & \dots & \mathbf{A}^{\{N,N\}}
\end{bmatrix}\,, \quad
\mathbf{b} = \begin{bmatrix}
 \mathbf{b}^{\{1\}}  \\
\vdots \\
\mathbf{b}^{\{N\}}  
\end{bmatrix}\,, 
\end{equation}
and
\[
z = \begin{bmatrix} z^{\{1\}} \\ \vdots \\ z^{\{N\}}  \end{bmatrix}\,, \quad
Z = \begin{bmatrix}
z^{\{1\}}\,\mathbf{I}_{ s^{\{1\}} \times s^{\{1\}}} & \dots & \mathbf{0} \\
&\ddots& \\
\mathbf{0} & \dots & z^{\{N\}}\,\mathbf{I}_{ s^{\{N\}} \times s^{\{N\}}}
\end{bmatrix}\,.
\]
%
%
We have that
\[
y_{n+1} = R\left( z^{\{1\}}, \dots, z^{\{N\}}\right) \, y_n\,,
\]
where the stability function can be written compactly as
\begin{eqnarray}
\label{eqn:stability-function}
R( z) &=& 1 + \mathbf{b}^T \cdot Z   \cdot \left( \mathbf{I}_{s\times s} -   \mathbf{A}\, Z \right)^{-1}\cdot \one_{s\times 1} \,.
\end{eqnarray}

\begin{example}[Linear stability of stiffly accurate GARK methods]
Consider a stiffly accurate GARK method \eqref{eqn:stiff-accuracy} and assume that
$Re(z_i)<0$ for $i=1,\dots,N$.  We have 
\begin{eqnarray*}
R(z) &=& 1 + \mathbf{e}_s^T \cdot \mathbf{A}\, Z   \cdot \left( \mathbf{I}_{s\times s} -   \mathbf{A}\, Z \right)^{-1}\cdot \one_{s\times 1} \\
 &=&  \mathbf{e}_s^T \cdot \left( \mathbf{I}_{s\times s} -   \mathbf{A}\, Z \right)^{-1}\cdot \one_{s\times 1} \\
 &=&  \mathbf{e}_s^T \, Z^{-1} \cdot \left(Z^{-1} -   \mathbf{A} \right)^{-1}\cdot \one_{s\times 1} \\
 &=&  \frac{1}{z^{\{_N\}}} \, \mathbf{e}_s^T \cdot \left(Z^{-1} -   \mathbf{A} \right)^{-1}\cdot \one_{s\times 1}\,.
\end{eqnarray*}
Assume that the system integrated with the stiffly accurate component is very stiff, $z^{\{N\}} \to -\infty$. Then the {\em entire}
GARK stability function becomes zero, $R(z) \to 0$.
\end{example}

\subsection{Nonlinear stability analysis}

We now study the nonlinear stability of GARK methods \eqref{eqn:GARK}  applied to
partitioned systems \eqref{eqn:additive-ode} where each of the component functions is dispersive
with respect to the same scalar product $\langle  \cdot, \cdot \rangle$:
\begin{equation}
\label{eqn:dispersive-condition}
\left\langle  f^{\{m\}} (y)- f^{\{m\}} (z)\,,\, y-z \right\rangle \le \nu^{\{m\}}\, \left\Vert y-z  \right\Vert^2\,, \quad \nu^{\{m\}} < 0\,. 
\end{equation}
Consider two solutions $y(t)$ and $\widetilde{y}(t)$  of \eqref{eqn:additive-ode}, each starting from a different initial condition.
Equation \eqref{eqn:dispersive-condition} implies that
\[
\left\langle  f (y)- f (z)\,,\, y-z \right\rangle \le \left( \sum_{m=1}^N \nu^{\{m\}} \right)\, \left\Vert y-z  \right\Vert^2\,, \quad \sum_{m=1}^N \nu^{\{m\}} < 0\,, 
\]
and consequently the norm of the solution difference $\Delta y(t) = \widetilde{y}(t)- y(t)$ is non-increasing, 
$\lim_{\varepsilon>0, \varepsilon \to 0} \norm{\Delta y(t+\varepsilon)} \le \norm{\Delta y(t)}$.
It is desirable that the difference of the corresponding numerical solutions 
is also non-increasing, $\norm{ \Delta y_{n+1} } \le \norm{ \Delta y_{n} }$. The analysis is carried out in the norm associated with the scalar product 
in \eqref{eqn:dispersive-condition}.

Several matrices are defined from the coefficients of  \eqref{eqn:GARK} for $m,\ell = 1,\dots,N$:
\begin{eqnarray}
\label{eqn:B-matrix}
\mathbf{B}^{\{m\}} &=& \mbox{diag}\left(\mathbf{b}^{\{m\}}\right) \,, \\
\label{eqn:P-matrix}
\mathbf{P}^{\{m,\ell\}} &=& \left(\mathbf{A}^{\{\ell,m\}}\right)^T  \mathbf{B}^{\{\ell\}} + \mathbf{B}^{\{m\}} \mathbf{A}^{\{m,\ell\}} -  \mathbf{b}^{\{m\}} \left(\mathbf{b}^{\{\ell\}}\right)^T\,, 
\end{eqnarray}
with $\mathbf{B}^{\{m\}} \in \Re^{s^{\{m\}} \times s^{\{m\}}}$, $\mathbf{P}^{\{m,\ell\}} \in \Re^{s^{\{m\}} \times s^{\{\ell\}}}$, and
\[
\mathbf{P}^{\{\ell,m\}} = \left(\mathbf{P}^{\{m,\ell\}}\right)^T \,.
\]
The following definition and analysis generalize the ones in  \cite{Higueras_2005_monotonicity}.

\begin{definition}[Algebraicaly stable GARK methods]
A generalized additive Runge-Kutta method \eqref{eqn:GARK} is algebraically stable if the weight vectors are non-negative
\begin{subequations}
\begin{equation}
\label{eqn:algebraic-stability-b}
b_{i}^{\{m\}} \ge 0 \quad \textnormal{for all  } i=1,\dots,s^{\{m\}}\,, \quad m=1,\dots,N\,,
\end{equation}
and the following matrix is non-negative definite:
\begin{equation}
\label{eqn:algebraic-stability-P}
\mathbf{P} = \begin{bmatrix}
\mathbf{P}^{\{\ell,m\}}\end{bmatrix}_{1 \le \ell,m \le N}
 \in \Re^{s \times s}\,, \quad \mathbf{P} \ge 0\,.
\end{equation}
\end{subequations}
\end{definition}

We have the following result.

\begin{theorem}[Algebraic stability of GARK methods]
An algebraically stable GARK method \eqref{eqn:GARK} applied to a 
partitioned system \eqref{eqn:additive-ode} with dispersive component functions
\eqref{eqn:dispersive-condition} is unconditionally nonlinearly stable, in the sense that
the difference of any two numerical solutions in non-increasing
\[
\norm{\Delta y_{n+1}} \le \norm{\Delta y_{n}}
\]
for any step size $h>0$.
\end{theorem}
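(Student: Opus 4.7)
The plan is to mimic the classical algebraic stability argument for Runge--Kutta methods, but carried out block by block so that the off-diagonal coupling matrices $\mathbf{A}^{\{q,m\}}$ with $q\neq m$ naturally appear. Let $y_n,\widetilde y_n$ be two starting values, denote by $Y_i^{\{q\}}, \widetilde Y_i^{\{q\}}$ the corresponding stages, and set
\[
\Delta y_n = \widetilde y_n - y_n,\quad \Delta Y_i^{\{q\}} = \widetilde Y_i^{\{q\}} - Y_i^{\{q\}},\quad \Delta f_i^{\{m\}} = f^{\{m\}}\bigl(\widetilde Y_i^{\{m\}}\bigr) - f^{\{m\}}\bigl(Y_i^{\{m\}}\bigr).
\]
Subtracting the GARK equations~\eqref{eqn:GARK} for the two trajectories gives
\[
\Delta Y_i^{\{q\}} = \Delta y_n + h \sum_{m,j} a_{i,j}^{\{q,m\}}\,\Delta f_j^{\{m\}},\qquad
\Delta y_{n+1} = \Delta y_n + h\sum_{q,i} b_i^{\{q\}}\,\Delta f_i^{\{q\}}.
\]

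First I would square the solution update in the given inner product to obtain
\[
\|\Delta y_{n+1}\|^2 - \|\Delta y_n\|^2
= 2h\sum_{q,i} b_i^{\{q\}}\langle \Delta y_n, \Delta f_i^{\{q\}}\rangle
+ h^2 \sum_{q,i,m,j} b_i^{\{q\}} b_j^{\{m\}}\langle \Delta f_i^{\{q\}},\Delta f_j^{\{m\}}\rangle.
\]
Next, in each term of the cross-sum I substitute $\Delta y_n = \Delta Y_i^{\{q\}} - h\sum_{m,j} a_{i,j}^{\{q,m\}} \Delta f_j^{\{m\}}$. After symmetrising the resulting double sum in the index pair $(q,i)\leftrightarrow(m,j)$ via the bilinearity of $\langle\cdot,\cdot\rangle$, the coefficient of $\langle \Delta f_i^{\{q\}},\Delta f_j^{\{m\}}\rangle$ becomes exactly
\[
- h^2 \Bigl[\,b_i^{\{q\}} a_{i,j}^{\{q,m\}} + b_j^{\{m\}} a_{j,i}^{\{m,q\}} - b_i^{\{q\}} b_j^{\{m\}}\,\Bigr] = -h^2\, \mathbf{P}^{\{q,m\}}_{i,j},
\]
by the definition~\eqref{eqn:P-matrix}. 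Collecting everything yields the fundamental identity
\[
\|\Delta y_{n+1}\|^2 - \|\Delta y_n\|^2
= 2h\sum_{q,i} b_i^{\{q\}}\langle \Delta Y_i^{\{q\}},\Delta f_i^{\{q\}}\rangle
- h^2 \sum_{q,m,i,j}\mathbf{P}^{\{q,m\}}_{i,j}\,\langle \Delta f_i^{\{q\}},\Delta f_j^{\{m\}}\rangle.
\]

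The dispersivity assumption~\eqref{eqn:dispersive-condition} with $\nu^{\{q\}}<0$ and the sign condition~\eqref{eqn:algebraic-stability-b} make the first sum non-positive term-by-term. For the second sum I would use the block decomposition $\mathbf{P}\ge 0$ from~\eqref{eqn:algebraic-stability-P}: diagonalising $\mathbf{P}=\sum_k \lambda_k u_k u_k^T$ with $\lambda_k\ge 0$ and stacking all vectors $\Delta f_i^{\{q\}}$ into a single block vector $\mathbf{v}$ indexed by the combined label $(q,i)$, one has
\[
\sum_{q,m,i,j}\mathbf{P}^{\{q,m\}}_{i,j}\langle \Delta f_i^{\{q\}},\Delta f_j^{\{m\}}\rangle
= \sum_k \lambda_k\, \Bigl\|\sum_{q,i} (u_k)_{q,i}\,\Delta f_i^{\{q\}}\Bigr\|^2 \ge 0.
\]
Hence both corrections to $\|\Delta y_n\|^2$ are non-positive and the monotonicity $\|\Delta y_{n+1}\|\le\|\Delta y_n\|$ follows for any $h>0$.

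The routine parts are the squared-norm expansion and the dispersivity estimate on the first sum; the step that requires the most care is the symmetrisation that brings the off-diagonal couplings into the canonical form $\mathbf{P}^{\{q,m\}}$, together with the observation that block positive semidefiniteness of $\mathbf{P}$ acting on scalars transfers to non-negativity of the associated vector-valued quadratic form. This last point is what makes the extension from a single method to the partitioned GARK framework work cleanly.
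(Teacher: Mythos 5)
Your proposal is correct and follows essentially the same route as the paper: subtract the two GARK trajectories, expand $\norm{\Delta y_{n+1}}^2$, eliminate $\Delta y_n$ via the stage relation, symmetrize to identify the coefficient $b_i^{\{q\}} a_{i,j}^{\{q,m\}} + b_j^{\{m\}} a_{j,i}^{\{m,q\}} - b_i^{\{q\}} b_j^{\{m\}} = \mathbf{P}^{\{q,m\}}_{i,j}$, and then drop the quadratic term by $\mathbf{P}\ge 0$ and the linear term by $b_i^{\{q\}}\ge 0$ together with dispersivity. Your explicit spectral decomposition of $\mathbf{P}$ is just an unpacked version of the paper's statement that $\mathbf{P}\otimes \mathbf{I}_{d\times d}$ is positive semidefinite, so the two arguments coincide in substance.
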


\begin{proof}
The difference  between solutions $ \Delta y_n = \widetilde{y}_n - y_n$ advances in time as follows:
\begin{subequations}
\begin{eqnarray}
\label{eqn:delta-stage2-diff}
\Delta Y_i^{\{q\}} &=& \Delta y_{n} + h \sum_{m=1}^N \sum_{j=1}^{s^{\{m\}}} a_{i,j}^{\{q,m\}} \, \Delta f^{\{m\}}_j, \quad q=1,\ldots,N\,,  \\
\label{eqn:delta-sol2-diff}
\Delta y_{n+1} &=& \Delta y_n + h \sum_{q=1}^N\, \sum_{i=1}^{s^{\{q\}}} b_{i}^{\{q\}} \, \Delta f^{\{q\}}_i \,.
\end{eqnarray}
\end{subequations}
where
\[
\Delta Y^{\{m\}}_j = \widetilde Y_j^{\{m\}} - Y_j^{\{m\}}\,, \quad
\Delta f^{\{m\}}_j = f^{\{m\}}\left(\widetilde Y_j^{\{m\}}\right) - f^{\{m\}}\left(Y_j^{\{m\}}\right).
\]
From \eqref{eqn:delta-sol2-diff} we get 
\begin{eqnarray}
\label{eqn:tmp1}
\norm{\Delta y_{n+1}}^2 &=& \norm{\Delta y_{n}}^2 \\
\nonumber
&& + 2\, h\,  \sum_{m=1}^N\, \sum_{i=1}^{s^{\{m\}}} b_{i}^{\{m\}} \, \scalar{ \Delta f^{\{m\}}_i \, , \, \Delta y_{n} } \\
\nonumber
&& + h^2 \, \sum_{m,q=1}^N\, \sum_{i=1}^{s^{\{m\}}} \sum_{j=1}^{s^{\{q\}}}  b_{i}^{\{m\}} \,  b_{j}^{\{q\}} \, \scalar{ \Delta f^{\{m\}}_i \,,\, \Delta f^{\{q\}}_j }\,.
\end{eqnarray}
From \eqref{eqn:delta-stage2-diff} it follows that
\begin{eqnarray}
\label{eqn:tmp2}
 \Delta y_{n} &=&\Delta Y_i^{\{q\}}  -  h \, \sum_{m=1}^N\, \sum_{j=1}^{s^{\{m\}}} a_{i,j}^{\{q,m\}} \, \Delta f^{\{m\}}_j \,.
 \end{eqnarray}
Substituting \eqref{eqn:tmp2} into \eqref{eqn:tmp1} leads to
\begin{eqnarray}
\label{eqn:tmp3}
\norm{\Delta y_{n+1}}^2 &=& \norm{\Delta y_{n}}^2 \\
\nonumber
&& + 2\, h\,  \sum_{m=1}^N\, \sum_{i=1}^{s^{\{m\}}} b_{i}^{\{m\}} \, \scalar{ \Delta f^{\{m\}}_i \, , \, \Delta Y_i^{\{m\}} } \\
\nonumber
&& - 2\, h^2 \,   \sum_{m,l=1}^N\, \sum_{i=1}^{s^{\{m\}}} \sum_{j=1}^{s^{\{l\}}}  a_{i,j}^{\{m,l\}}  b_{i}^{\{m\}} \, \scalar{ \Delta f^{\{m\}}_i \, , \,\Delta f^{\{l\}}_j  } \\
\nonumber
&& + h^2 \, \sum_{m,l=1}^N\, \sum_{i=1}^{s^{\{m\}}} \sum_{j=1}^{s^{\{l\}}}   b_{i}^{\{m\}} \,  b_{j}^{\{l\}} \, \scalar{ \Delta f^{\{m\}}_i \,,\, \Delta f^{\{l\}}_j } \\
\label{eqn:tmp3b}
&=& \norm{\Delta y_{n}}^2 \\
\nonumber
&& + 2\, h\,  \sum_{m=1}^N\, \sum_{i=1}^{s^{\{m\}}}  b_{i}^{\{m\}} \, \scalar{ \Delta f^{\{m\}}_i \, , \, \Delta Y_i^{\{m\}} } \\
\nonumber
&& -  h^2 \,   \sum_{m,l=1}^N\, \sum_{i=1}^{s^{\{m\}}} \sum_{j=1}^{s^{\{l\}}}  a_{i,j}^{\{m,l\}}  b_{i}^{\{m\}} \, \scalar{ \Delta f^{\{m\}}_i \, , \,\Delta f^{\{l\}}_j  } \\
\nonumber
&& -  h^2 \,   \sum_{m,l=1}^N\, \sum_{i=1}^{s^{\{m\}}} \sum_{j=1}^{s^{\{l\}}}  a_{j,i}^{\{l,m\}}  b_{j}^{\{l\}} \, \scalar{ \Delta f^{\{m\}}_i \, , \,\Delta f^{\{l\}}_j  } \\
\nonumber
&& + h^2 \, \sum_{m,l=1}^N\, \sum_{i=1}^{s^{\{m\}}} \sum_{j=1}^{s^{\{l\}}}  b_{i}^{\{m\}} \,  b_{j}^{\{l\}} \, \scalar{ \Delta f^{\{m\}}_i \,,\, \Delta f^{\{l\}}_j }
\,.
\end{eqnarray}
Equation \eqref{eqn:tmp3b} can be written in the equivalent form
\begin{eqnarray}
\nonumber
\norm{\Delta y_{n+1}}^2 
&=& \norm{\Delta y_{n}}^2  + 2\, h\,  \sum_{m=1}^N\, \sum_{i=1}^{s^{\{m\}}} b_{i}^{\{m\}} \, \scalar{ \Delta f^{\{m\}}_i \, , \, \Delta Y_i^{\{m\}} } \\
\nonumber
&& - h^2 \,  \sum_{m,l=1}^N\, \sum_{i=1}^{s^{\{m\}}} \sum_{j=1}^{s^{\{l\}}}  \left( b_{i}^{\{m\}} a_{i,j}^{\{m,l\}}   +  b_{j}^{\{l\}}  a_{j,i}^{\{l,m\}}  -  b_{i}^{\{m\}} \,  b_{j}^{\{\l\}} \right) \, \scalar{ \Delta f^{\{m\}}_i \, , \,\Delta f^{\{l\}}_j  } \\
\label{eqn:tmp4}
&=& \norm{\Delta y_{n}}^2  + 2\, h \,  \sum_{m=1}^N\, \sum_{i=1}^{s^{\{m\}}} b_{i}^{\{m\}} \, \scalar{ \Delta f^{\{m\}}_i \, , \, \Delta Y_i^{\{m\}} } \\
\nonumber
&& - h^2 \,  \sum_{m,l=1}^N\,  (\Delta f^{\{m\}})^T\, \left( \mathbf{P}^{\{m,l\}} \otimes \mathbf{I}_{d \times d} \right) \, \Delta f^{\{l\}}
\end{eqnarray}
where
\[
\Delta f^{\{m\}} = \left[\, \bigl(\Delta f_1^{\{m\}}\bigr)^T, \dots, \bigl(\Delta f_{s^{\{m\}}}^{\{m\}}\bigr)^T \, \right]^T.
\]
From \eqref{eqn:tmp4} and the positive definiteness of $\mathbf{P}$  \eqref{eqn:algebraic-stability-P} we have that
\begin{eqnarray*}
\norm{\Delta y_{n+1}}^2 
&\le& \norm{\Delta y_{n}}^2  + 2\, h \,  \sum_{m=1}^N\, \sum_{i=1}^s b_{i}^{\{m\}} \, \scalar{ \Delta f^{\{m\}}_i \, , \, \Delta Y_i^{\{m\}} } \,.
\end{eqnarray*}
The positivity of the weights \eqref{eqn:algebraic-stability-b} and dispersion condition \eqref{eqn:dispersive-condition} give the desired result:
\begin{eqnarray*}
\norm{\Delta y_{n+1}}^2 
&\le& \norm{\Delta y_{n}}^2 + 2\, h \,  \sum_{m=1}^N\, \nu^{\{m\}} \sum_{i=1}^s b_{i}^{\{m\}} \, \norm{ \Delta Y_i^{\{m\}} }^2 \le \norm{\Delta y_{n}}^2  \,.  
\end{eqnarray*}
%
\end{proof}

\begin{definition}[Stability-decoupled GARK schemes]
A GARK method \eqref{eqn:GARK}  is {\em stability-decoupled} if 
\begin{equation}
\label{eqn:stability-decoupled}
\mathbf{P}^{\{m,\ell\}}
= \mathbf{0}
\quad \textnormal{for}~~m \ne \ell\,.
\end{equation}
\end{definition}
For stability decoupled GARK methods the interaction of different components does not influence the overall nonlinear stability. 
If each of the component methods is nonlinearly stable, perhaps under a suitable step size restriction, then 
the overall method is nonlinearly stable (under a step size restriction that satisfies each of the components).
In particular, if each of the component Runge-Kutta scheme is algebraically stable
\[
b_i^{\{m\}} > 0\,, \quad
\mathbf{B}^{\{m\}} = \mbox{diag}(\mathbf{b}^{\{m\}})\,, \quad
\mathbf{P}^{\{m,m\}}
\ge 0 \quad m=1,\ldots,N,
\]
then equation \eqref{eqn:tmp4} shows that \eqref{eqn:stability-decoupled} is a sufficient condition for the algebraic stability 
of the GARK scheme.

\begin{remark}
 Equation \eqref{eqn:tmp4} shows that the nonlinear stability of each of the component methods is sufficient to obtain a nonlinear stable GARK scheme in the case of component partitioning
 \eqref{eqn:component-partitioning}.
\end{remark}

\subsection{Conditional stability for coercive problems}

Next, consider partitioned systems \eqref{eqn:additive-ode} where each of the component functions is coercive \cite{Higueras_2005_monotonicity}:
\begin{equation}
\label{eqn:dispersive-condition-f}
\left\langle  f^{\{m\}} (y)- f^{\{m\}} (z)\,,\, y-z \right\rangle \le \mu^{\{m\}}\, \left\Vert f^{\{m\}} (y)- f^{\{m\}} (z) \right\Vert^2\,, \quad \mu^{\{m\}} < 0\,. 
\end{equation}
\begin{theorem}[Conditional stability of GARK methods]
Consider a partitioned system \eqref{eqn:additive-ode} with coercive component functions
\eqref{eqn:dispersive-condition-f}
solved by a GARK method \eqref{eqn:GARK}. Assume that there exist $r^{\{m\}} \ge 0$ such that the following matrix is positive definite
\begin{equation}
\label{eqn:conditional-stability-P}
\widetilde{\mathbf{P}} = \mathbf{P} + \mbox{diag}_{m=1,\dots,N} \left\{ r^{\{m\}} \mathbf{B}^{\{m\}} \right\} \ge 0\,,
\end{equation}
where $\mathbf{P}$ was defined in \eqref{eqn:algebraic-stability-P}. 
Then the solution is conditionally nonlinearly stable, in the sense that $\norm{\Delta y_{n+1}} \le \norm{\Delta y_{n}}$,
under the step size restriction 
\[
 h \le \min_{m=1,\dots,N} \frac{  -2\, \mu^{\{m\}} }{ r^{\{m\}} }\,.
\]
\end{theorem}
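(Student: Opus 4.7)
The plan is to start from the key identity derived in the proof of the preceding (algebraic stability) theorem. Equation \eqref{eqn:tmp4} gives
\[
\norm{\Delta y_{n+1}}^2 = \norm{\Delta y_n}^2 + 2h\sum_{m=1}^N\sum_{i=1}^{s^{\{m\}}} b_i^{\{m\}}\,\langle \Delta f_i^{\{m\}},\Delta Y_i^{\{m\}}\rangle - h^2 \sum_{m,\ell=1}^N (\Delta f^{\{m\}})^T \bigl(\mathbf{P}^{\{m,\ell\}}\otimes \mathbf{I}_{d\times d}\bigr)\,\Delta f^{\{\ell\}},
\]
and this identity follows purely from the arithmetic of the GARK update, so it remains available in the coercive setting.

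Next I would insert the coercivity estimate \eqref{eqn:dispersive-condition-f} stage by stage, $\langle \Delta f_i^{\{m\}},\Delta Y_i^{\{m\}}\rangle \le \mu^{\{m\}}\,\norm{\Delta f_i^{\{m\}}}^2$. Since the weights $b_i^{\{m\}}$ are non-negative (this is implicit through $\widetilde{\mathbf{P}}\ge 0$ combined with the diagonal blocks $r^{\{m\}}\mathbf{B}^{\{m\}}$), the middle term is bounded above by
\[
2h\sum_{m=1}^N \mu^{\{m\}}\,(\Delta f^{\{m\}})^T \bigl(\mathbf{B}^{\{m\}}\otimes \mathbf{I}_{d\times d}\bigr)\,\Delta f^{\{m\}}.
\]
The step-size hypothesis $h \le -2\mu^{\{m\}}/r^{\{m\}}$ rearranges to $2h\mu^{\{m\}} \le -h^2 r^{\{m\}}$, and multiplying by the non-negative quadratic form $(\Delta f^{\{m\}})^T(\mathbf{B}^{\{m\}}\otimes\mathbf{I}_{d\times d})\Delta f^{\{m\}}$ converts the previous bound into $-h^2 \sum_m r^{\{m\}}\,(\Delta f^{\{m\}})^T (\mathbf{B}^{\{m\}}\otimes \mathbf{I}_{d\times d})\,\Delta f^{\{m\}}$.

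Assembling this contribution with the $\mathbf{P}$ term and recognizing, via definitions \eqref{eqn:algebraic-stability-P} and \eqref{eqn:conditional-stability-P}, the single block matrix $\widetilde{\mathbf{P}}$, I obtain
\[
\norm{\Delta y_{n+1}}^2 \le \norm{\Delta y_n}^2 - h^2\, (\Delta f)^T \bigl(\widetilde{\mathbf{P}}\otimes \mathbf{I}_{d\times d}\bigr)\,\Delta f, \quad \Delta f := \bigl[(\Delta f^{\{1\}})^T,\ldots,(\Delta f^{\{N\}})^T\bigr]^T.
\]
The hypothesis $\widetilde{\mathbf{P}}\ge 0$ then yields the desired contractivity $\norm{\Delta y_{n+1}}\le \norm{\Delta y_n}$. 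There is no real obstacle here, since the heavy lifting is already contained in identity \eqref{eqn:tmp4}; the only delicate point is sign bookkeeping, as both the coercivity constant $\mu^{\{m\}}$ and the cross term $\langle\Delta f,\Delta Y\rangle$ are non-positive, and one must match the calibration $2h\mu^{\{m\}} + h^2 r^{\{m\}} \le 0$ exactly to the stated step-size restriction to absorb the coercivity defect into the $r^{\{m\}}\mathbf{B}^{\{m\}}$ augmentation that turns $\mathbf{P}$ into $\widetilde{\mathbf{P}}$.
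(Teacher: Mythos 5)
Your argument is correct and follows essentially the paper's own proof: both start from identity \eqref{eqn:tmp4}, insert the coercivity bound \eqref{eqn:dispersive-condition-f} stage by stage, and close the estimate using $2h\mu^{\{m\}}+h^2 r^{\{m\}}\le 0$ together with $\widetilde{\mathbf{P}}\ge 0$, merely in a slightly different order (the paper splits $\mathbf{P}=\widetilde{\mathbf{P}}-\mathrm{diag}\{r^{\{m\}}\mathbf{B}^{\{m\}}\}$ first and invokes the step-size restriction at the very end, whereas you convert the coercivity term into $-h^2 r^{\{m\}}\mathbf{B}^{\{m\}}$ immediately and then assemble $\widetilde{\mathbf{P}}$). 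The one caveat, shared with the paper's proof, is the tacit use of $b_i^{\{m\}}\ge 0$ when weighting the coercivity inequality; this is not listed in the theorem's hypotheses and does not strictly follow from $\widetilde{\mathbf{P}}\ge 0$ as your parenthetical suggests, so it should be stated as an assumption.
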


\begin{proof}
Equation \eqref{eqn:tmp4} and condition \eqref{eqn:dispersive-condition-f} yield
\begin{eqnarray*}
\norm{\Delta y_{n+1}}^2 
&\le& \norm{\Delta y_{n}}^2  + 2\, h \,  \sum_{m=1}^N\, \sum_{i=1}^s b_{i}^{\{m\}} \mu^{\{m\}}\, \left\Vert \Delta f^{\{m\}}_i\right\Vert^2 \\
\nonumber
&& - h^2 \,  \sum_{m,l=1}^N\,  (\Delta f^{\{m\}})^T\, \left( \widetilde{\mathbf{P}}^{\{m,l\}} \otimes \mathbf{I}_{d \times d} \right) \, \Delta f^{\{l\}} \\
&& + h^2 \,  \sum_{m=1}^N\,  (\Delta f^{\{m\}})^T\, \left( r^{\{m\}} \mathbf{B}^{\{m\}} \otimes \mathbf{I}_{d \times d} \right) \, \Delta f^{\{m\}}\\
&=& \norm{\Delta y_{n}}^2  + 2\, h \,  \sum_{m=1}^N\, \sum_{i=1}^s b_{i}^{\{m\}} \mu^{\{m\}}\, \left\Vert \Delta f^{\{m\}}_i\right\Vert^2 \\
\nonumber
&& - h^2 \,  \sum_{m,l=1}^N\,  (\Delta f^{\{m\}})^T\, \left( \widetilde{\mathbf{P}}^{\{m,l\}} \otimes \mathbf{I}_{d \times d} \right) \, \Delta f^{\{l\}} \\
&& + h^2 \,  \sum_{m=1}^N\,   \sum_{i=1}^s  r^{\{m\}} b_{i}^{\{m\}}  \left\Vert \Delta f^{\{m\}}_i\right\Vert^2\,.
\end{eqnarray*}
From the positive definiteness assumption \eqref{eqn:conditional-stability-P}
\begin{eqnarray*}
\norm{\Delta y_{n+1}}^2 
&\le& \norm{\Delta y_{n}}^2  + h \,  \sum_{m=1}^N\, \sum_{i=1}^s b_{i}^{\{m\}} \left(2\,  \mu^{\{m\}} + h  r^{\{m\}}  \right)\, \left\Vert \Delta f^{\{m\}}_i\right\Vert^2\,.
\end{eqnarray*}
We see that if $2\,  \mu^{\{m\}} + h  r^{\{m\}}  \le 0$ for all $m=1,\dots,N$ then $\norm{\Delta y_{n+1}}^2  \le \norm{\Delta y_{n}}^2$.
This proves the desired result, which extends the one given in \cite{Higueras_2005_monotonicity} for classical additive Runge-Kutta methods.
\end{proof}

\begin{remark}
If the GARK method is stability decoupled \eqref{eqn:stability-decoupled} then the weights $r^{\{m\}}$ in \eqref{eqn:conditional-stability-P} are chosen independently for each 
component. In this case each component method, applied to the corresponding subsystem, is conditionally stable under a step restriction $h \le h_{\rm max}^{\{m\}}$.
The GARK method's step size restriction is given by the bounds for individual components, $h \le \min_{m=1,\dots,N} h_{\rm max}^{\{m\}}$, i.e.,
no additional stability restrictions are imposed on the step size.
\end{remark}

\begin{example}[A second order,  stability-decoupled IMEX-GARK scheme]

We construct a second order IMEX-GARK method where the implicit and explicit parts have different numbers of stages. The method has a free parameter denoted $\beta$.
The implicit method
\renewcommand{\arraystretch}{1.25}
\[
\mathbf{A}^\II  = \left(\begin{array}{cc} \frac{1}{4} & 0\\ \frac{1}{2} & \frac{1}{4} \end{array}\right)\,, \quad
\mathbf{b}^\I  =\left(\begin{array}{c} \frac{1}{2}\\ \frac{1}{2} \end{array}\right)\,, \quad
\mathbf{c}^\II  = \left(\begin{array}{c} \frac{1}{4}\\ \frac{3}{4} \end{array}\right)\,,
\]
\renewcommand{\arraystretch}{1}
is second order accurate and algebraically stable since
\[
\mathbf{P}^\II = \mathbf{A}^\II\,^T  \mathbf{B}^\I + \mathbf{B}^\I \mathbf{A}^\II -  \mathbf{b}^\I \mathbf{b}^\I\,^T = 0\,.
\]
The explicit method is:
\renewcommand{\arraystretch}{1.25}
\[
\mathbf{A}^\EE  = \left[\begin{array}{ccc} 0 & 0 & 0\\ \frac{1}{2} & 0 & 0\\ 1 - \beta & \beta & 0 \end{array}\right]\,, \quad
\mathbf{b}^\E  =  \left[\begin{array}{c} \frac{1}{4}\\ \frac{1}{2}\\ \frac{1}{4} \end{array}\right] \,, \quad
\mathbf{c}^\EE =   \left[\begin{array}{c} 0\\ \frac{1}{2}\\ 1 \end{array}\right]
  \,.
\]
The explicit method is conditionally stable for coercive problems.
A good value of the free parameter for stability is $\beta = -1/4$ for which \eqref{eqn:conditional-stability-P}
holds with $r\approx 2.6$.
The coupling coefficients are
\[
\mathbf{A}^\EI  = \left[\begin{array}{cc} 0 & 0\\ \frac{1}{2} & 0\\ \frac{1}{2} & \frac{1}{2} \end{array}\right]\,, \quad
\mathbf{c}^\EI  = \mathbf{c}^\EE  = \left[\begin{array}{c} 0\\ \frac{1}{2}\\ 1 \end{array}\right]\,,
\]
and
\[
\mathbf{A}^\IE  = \left[\begin{array}{ccc} \frac{1}{4} & 0 & 0\\ \frac{1}{4} & \frac{1}{2} & 0 \end{array}\right]\,, \quad
\mathbf{c}^\IE = \mathbf{c}^\II = \left[\begin{array}{c} \frac{1}{4}\\ \frac{3}{4} \end{array}\right]\,.
\]
\renewcommand{\arraystretch}{1}
The IMEX-GARK method is stability-decoupled
\[
\mathbf{P}^\EI = \left[\mathbf{A}^\IE\right]^T  \mathbf{B}^\I + \mathbf{B}^\E \mathbf{A}^\EI -  \mathbf{b}^\E \left[\mathbf{b}^\I\right]^T = \mathbf{0}\,.
\]
This property, and the algebraic stability of the implicit part, imply that the GARK method is nonlinearly stable under the same step size restriction  for which the explicit component
is nonlinearly stable (e.g.,  for $\beta = -1/4$ we have  $h \le -2\, \mu^\E/ 2.6$).

This IMEX-GARK scheme is represented compactly by its generalized Butcher tableau \eqref{eqn:generalized-tableau} as:
\renewcommand{\arraystretch}{1.25}
\[
\begin{array}{c|c|c|c}
\mathbf{c}^\EE & \mathbf{A}^\EE &\mathbf{A}^\EI & \mathbf{c}^\EI  \\
\hline 
\mathbf{c}^\IE & \mathbf{A}^\IE & \mathbf{A}^\II  & \mathbf{c}^\II \\
\hline
& \mathbf{b}^\E & \mathbf{b}^\I 
\end{array}
~~:=~~
\begin{array}{c|ccc|cc|c} 0 ~&~ 0 ~&~ 0 ~&~ 0 ~&~ 0 ~&~ 0 ~&~ 0\\ 
\frac{1}{2} ~&~ \frac{1}{2} ~&~ 0 ~&~ 0 ~&~ \frac{1}{2} ~&~ 0 ~&~ \frac{1}{2}\\  
1 ~&~ 1 - \beta ~&~ \beta ~&~ 0 ~&~ \frac{1}{2} ~&~ \frac{1}{2} ~&~ 1\\ 
\hline
\frac{1}{4} ~&~ \frac{1}{4} ~&~ 0 ~&~ 0 ~&~ \frac{1}{4} ~&~ 0 ~&~ \frac{1}{4}\\ \frac{3}{4} ~&~ \frac{1}{4} ~&~ \frac{1}{2} ~&~ 0 ~&~ \frac{1}{2} ~&~ \frac{1}{4} ~&~ \frac{3}{4}\\ 
\hline
 ~&~ \frac{1}{4} ~&~ \frac{1}{2} ~&~ \frac{1}{4} ~&~ \frac{1}{2} ~&~ \frac{1}{2} ~&~  \end{array}~.
\]
\end{example}
\renewcommand{\arraystretch}{1}

\subsection{Monotonicity analysis}

This section studies the contractivity and monotonicity of the generalized additively partitioned Runge-Kutta methods.
We are concerned with partitioned systems \eqref{eqn:additive-ode} where there exist $\rho^{\{1\}}, \dots, \rho^{\{N\}} > 0$ such that 
\begin{equation} 
\label{eqn:monotone-ode}
\forall \, y ~: \quad
\left\Vert y + \rho^{\{m\}}\, f^{\{m\}} (y) \right\Vert \le \left\Vert y  \right\Vert\,, \quad m = 1,\dots,N\,.
\end{equation}
This implies that condition \eqref{eqn:monotone-ode} holds for any $0 \le \tau^{\{m\}} \le \rho^{\{m\}}$, i.e.,  for each individual subsystem $m$ the solution of one forward Euler step 
is monotone under this step size restriction. The condition \eqref{eqn:monotone-ode} also implies that
the system \eqref{eqn:additive-ode} has a solution of non increasing norm.
To see this write an Euler step with the full system as a convex combination
\begin{eqnarray*}
\left\Vert y + \theta\, \sum_{m=1}^N f^{\{m\}} (y)  \right\Vert &=& \left\Vert \sum_{m=1}^N \frac{1}{N} \left( y + N \theta\, f^{\{m\}} (y)\right)  \right\Vert
\le \sum_{m=1}^N \frac{1}{N} \left\Vert y \right\Vert = \left\Vert y \right\Vert 
\end{eqnarray*}
if $0< \theta \le \min\{ \rho^{\{m\}}\}/N$, and consequently $\lim_{\varepsilon>0, \varepsilon \to 0} \norm{\ y(t+\varepsilon)} \le \norm{y(t)}$ \cite{Higueras_2006_SSP-ARK}.

We seek to construct GARK schemes which guarantee a monotone numerical solution $\left\Vert y_{n+1}  \right\Vert \le \left\Vert y_{n}  \right\Vert$ for \eqref{eqn:monotone-ode} under suitable step size restrictions.

A comprehensive study of contractivity of Runge-Kutta methods is given in \cite{Kraaijevanger_1991_contractivity}.
Step size conditions for monotonicity are discussed in \cite{Spijker_2007}.
Strong stability preserving methods suitable for hyperbolic PDEs are reviewed in \cite{Gottlieb_2001_SSP-review,Higueras_2004_SSP,Higueras_2005_SSP-representation}. 
Monotonicity for Runge-Kutta methods in inner product norms is discussed in \cite{Higueras_2005_monotonicity}.
This study follows the approach of Higueras and co-workers, who have extended the monotonicity theory to additive Runge-Kutta methods \cite{Higueras_2006_SSP-ARK,Higueras_2009_SSP-ARK,Garcia-Celayeta_2006_ARK-monotonicity}.

The scheme \eqref{eqn:GARK} can be represented in matrix form as
\begin{subequations}
\label{eqn:genaddRK-matrix}
\begin{eqnarray}
\label{eqn:genaddRK-stage-matrix}
Y^{\{q\}} &=& \one_{{\{q\}} \times 1} \otimes y_{n} + h \sum_{m=1}^N \left( \mathbf{A}^{\{q,m\}} \otimes \mathbf{I}_{d \times d} \right)\, f^{\{m\}}(Y^{\{m\}})\,, \\ 
\label{eqn:genaddRK-sol-matrix}
y_{n+1} &=& y_n + h \sum_{q=1}^N\,  \left(\mathbf{b}^{\{q\}}\,^T \otimes \mathbf{I}_{d \times d} \right) \, f^{\{q\}}(Y^{\{q\}}) \,.
\end{eqnarray}
\end{subequations}
Using notation \eqref{eqn:tilde-notation} and
\[
\widehat{\mathbf{A}} = \begin{bmatrix} \mathbf{A} & \mathbf{0} \\ (\mathbf{b})^T  & \mathbf{0} \end{bmatrix}\,, \quad
\widehat{Y} = \begin{bmatrix}
 Y^{\{1\}}  \\
\vdots \\
Y^{\{N\}}  \\
y_{n+1}
\end{bmatrix}\,,\quad
\widehat{f}\left(\widehat{Y}\right) = \begin{bmatrix}
f^{\{1\}}\left( Y^{\{1\}} \right) \\
\vdots \\
f^{\{N\}}\left( Y^{\{N\}}  \right) \\
0
\end{bmatrix}\,,
\]
equations \eqref{eqn:genaddRK-matrix} become
\begin{eqnarray}
\label{eqn:genaddRK-compact}
\widehat{Y} &=& \one_{(s+1) \times 1} \otimes y_{n} + h \, \left( \widehat{\mathbf{A}} \otimes \mathbf{I}_{d \times d} \right)\, \widehat{f}\left(\widehat{Y}\right)\,. 
\end{eqnarray}

The following definitions extend the corresponding ones from  \cite{Higueras_2006_SSP-ARK,Higueras_2009_SSP-ARK}.
\begin{definition}[Absolutely monotonic GARK] 
Let $r^{\{1\}},\dots,r^{\{N\}}>0$ and 
\begin{equation}
\label{eqn:R-hat}
r = \begin{bmatrix} r^{\{1\}} \\ \vdots \\ r^{\{N\}} \end{bmatrix}\,, \quad
\widehat{\mathbf{R}}=\textnormal{diag}\left\{  r^{\{1\}}\,\mathbf{I}_{s^{\{1\}} \times  s^{\{1\}}}, \dots,  r^{\{N\}}\,\mathbf{I}_{s^{\{N\}} \times  s^{\{N\}}} , 1 \right\}\,.
\end{equation}
A GARK scheme \eqref{eqn:GARK} defined by $\widehat{\mathbf{A}} \ge 0$ is called {\em absolutely monotonic} (a.m.) at $r \in \Re^N$ if 
\begin{subequations}
\label{eqn:am-conditions}
\begin{eqnarray}
\label{eqn:am-condition-e}
\alpha(r) &=&  \left(\mathbf{I}_{\hat{s} \times \hat{s}} + \widehat{\mathbf{A}}\widehat{\mathbf{R}} \right)^{-1}   \one_{\hat{s} \times 1} \ge 0\,, ~~~\textnormal{and} \\
\label{eqn:am-condition-alpha}
\beta(r) &=&  \left(\mathbf{I}_{\hat{s} \times \hat{s}} + \widehat{\mathbf{A}}\widehat{\mathbf{R}} \right)^{-1}  \widehat{\mathbf{A}}\widehat{\mathbf{R}}
=  \mathbf{I}_{\hat{s} \times \hat{s}} - \left(\mathbf{I}_{\hat{s} \times \hat{s}} + \widehat{\mathbf{A}}\widehat{\mathbf{R}} \right)^{-1} \ge 0\,,
\end{eqnarray}
\end{subequations}
where  $\hat{s}=s+1$. Here all the inequalities are taken component-wise.
\end{definition}

\begin{definition}[Region of absolute monotonicity] 
The region of absolute monotonicity of the GARK scheme \eqref{eqn:GARK}  is
\begin{equation}
\mathcal{R}(\widehat{\mathbf{A}}) = \left\{ r \in \Re^N_+~:~ \widehat{\mathbf{A}} \textnormal{ is a.m. on } \left[0,r^{\{1\}}\right] \times \dots \times \left[0,r^{\{N\}}\right] \right\}\,.
\end{equation}
\end{definition}

\begin{theorem}[Monotonicity of solutions] 
Consider the GARK scheme \eqref{eqn:GARK}  defined by $\widehat{\mathbf{A}}$ and a point in the interior of its absolute monotonicity region
\[
r \in \mathcal{R}(\widehat{\mathbf{A}})\,, \quad r > \mathbf{0}_{N \times 1}\,.
\]
For any step size obeying the restriction
\begin{equation}
\label{eqn:step-restriction-monotonicity}
h \le  \max_{q=1,\dots,N} \left\{ r^{\{q\}} \, \rho^{\{q\}} \right\}
\end{equation}
the stage values and the solution of \eqref{eqn:GARK} are monotonic
\begin{subequations}
\label{eqn:monotonicity-conclusions}
\begin{eqnarray}
\left\Vert  Y_i^{\{q\}} \right\Vert &\le& \left\Vert y_n \right\Vert\,, \quad q=1,\dots,N, ~~i=1,\dots,s^{\{q\}}\,, \\
\ \left\Vert  y_{n+1} \right\Vert &\le& \left\Vert y_n \right\Vert\,.
\end{eqnarray}
\end{subequations}
\end{theorem}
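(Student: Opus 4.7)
The plan is to follow the Shu--Osher/Kraaijevanger canonical-form technique, as extended to the additive setting by Higueras and coworkers \cite{Higueras_2006_SSP-ARK,Higueras_2009_SSP-ARK}. The key idea is to rewrite the GARK stage equations \eqref{eqn:genaddRK-compact} as convex-like combinations of $y_n$ and forward-Euler images of the stages themselves, and then exploit the forward-Euler monotonicity hypothesis \eqref{eqn:monotone-ode} to bound each forward-Euler image by the corresponding stage norm.

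\emph{Step 1: canonical form.} Starting from $\widehat{Y} = \one_{\hat s \times 1} \otimes y_n + h(\widehat{\mathbf{A}} \otimes \mathbf{I}_{d\times d})\,\widehat{f}(\widehat{Y})$, I would add $(\widehat{\mathbf{A}}\widehat{\mathbf{R}} \otimes \mathbf{I}_{d \times d})\widehat{Y}$ to both sides and rewrite $h\widehat{\mathbf{A}} = \widehat{\mathbf{A}}\widehat{\mathbf{R}} \cdot h \widehat{\mathbf{R}}^{-1}$. Left-multiplying by $((\mathbf{I}+\widehat{\mathbf{A}}\widehat{\mathbf{R}})^{-1} \otimes \mathbf{I}_{d\times d})$ and using the definitions in \eqref{eqn:am-conditions} yields the Shu--Osher-style representation
\[
\widehat{Y}_i \;=\; \alpha_i(r)\,y_n \;+\; \sum_{j=1}^{\hat s} \beta_{ij}(r)\,\Bigl[\, \widehat{Y}_j \,+\, \tfrac{h}{r^{\{m(j)\}}}\,\widehat{f}_j(\widehat{Y}) \,\Bigr],
\]
where $m(j)$ denotes the partition containing stage $j$, and $r^{\{m(\hat s)\}} := 1$ for the last slot whose driving term vanishes. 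A direct computation confirms $\alpha(r) + \beta(r)\,\one = \one$, so each equation is a bona fide convex combination whenever $\alpha(r),\beta(r) \ge 0$, i.e.\ when $r$ lies in the region of absolute monotonicity.

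\emph{Step 2: pointwise bound.} For each $j$ in block $m$, the bracketed quantity equals $Y^{\{m\}}_{j'} + (h/r^{\{m\}})\,f^{\{m\}}(Y^{\{m\}}_{j'})$. Under a step-size restriction of the form $h/r^{\{m\}} \le \rho^{\{m\}}$ for every $m$, hypothesis \eqref{eqn:monotone-ode} gives $\|Y^{\{m\}}_{j'} + (h/r^{\{m\}})f^{\{m\}}(Y^{\{m\}}_{j'})\| \le \|Y^{\{m\}}_{j'}\| = \|\widehat{Y}_j\|$. The $\hat s$-th slot contributes trivially because $\widehat{f}_{\hat s}=0$. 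Combined with $\alpha_i(r),\beta_{ij}(r) \ge 0$, the triangle inequality applied to the canonical form from Step~1 produces
\[
\|\widehat{Y}_i\| \;\le\; \alpha_i(r)\,\|y_n\| \;+\; \sum_{j=1}^{\hat s} \beta_{ij}(r)\,\|\widehat{Y}_j\|, \quad i=1,\dots,\hat s.
\]

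\emph{Step 3: closing the loop.} Collecting these scalar inequalities into the vector $\mathbf{v} := (\|\widehat{Y}_1\|,\dots,\|\widehat{Y}_{\hat s}\|)^T$, they read $(\mathbf{I}-\beta(r))\,\mathbf{v} \le \alpha(r)\,\|y_n\|$ componentwise. Since $\mathbf{I}-\beta(r) = (\mathbf{I}+\widehat{\mathbf{A}}\widehat{\mathbf{R}})^{-1}$ and the matrix $\mathbf{I}+\widehat{\mathbf{A}}\widehat{\mathbf{R}}$ is entrywise non-negative (recall $\widehat{\mathbf{A}} \ge 0$ and $\widehat{\mathbf{R}}\ge 0$), left-multiplying by $\mathbf{I}+\widehat{\mathbf{A}}\widehat{\mathbf{R}}$ preserves componentwise inequalities and gives $\mathbf{v} \le (\mathbf{I}+\widehat{\mathbf{A}}\widehat{\mathbf{R}})\,\alpha(r)\,\|y_n\| = \one\,\|y_n\|$. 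This is exactly the pair of conclusions \eqref{eqn:monotonicity-conclusions}, delivered simultaneously for every stage $Y^{\{q\}}_i$ and for $y_{n+1}$.

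\emph{Main obstacle.} The delicate point is Step~1: verifying that the canonical form genuinely describes convex-like combinations, with the block-wise scalings $h/r^{\{m\}}$ correctly paired with their partitions of varying stage count $s^{\{m\}}$. Once the bookkeeping of which stage belongs to which component is under control, the remainder reduces to the standard SSP argument. A careful reading of Step~2 also suggests that the condition ensuring every component's forward-Euler bound to hold simultaneously is $h \le \min_{q}\{r^{\{q\}}\rho^{\{q\}}\}$, and I would phrase the proof in that form.
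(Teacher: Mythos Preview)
Your proof is correct and follows essentially the same approach as the paper's: rewrite the compact stage equations in the Shu--Osher canonical form with coefficients $\alpha(r)$ and $\beta(r)$, apply the forward-Euler monotonicity hypothesis componentwise, and close via left-multiplication by the entrywise nonnegative matrix $\mathbf{I}+\widehat{\mathbf{A}}\widehat{\mathbf{R}}$. Your final remark is also apt: the argument in the paper actually uses $h/r^{\{q\}} \le \rho^{\{q\}}$ for every $q$, which requires $h \le \min_q\{r^{\{q\}}\rho^{\{q\}}\}$, so the $\max$ in the stated restriction appears to be a typo.
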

In practice we are interested in the largest upper bound for the time step that ensures monotonicity.

\begin{proof}
The proof is a direct extension of the corresponding one for classical additively partitioned Runge-Kutta methods given in \cite{Higueras_2006_SSP-ARK}.
Construct the matrix $\widehat{\mathbf{R}}$ as in \eqref{eqn:R-hat}.
Add the same quantity $\widehat{\mathbf{A}}\widehat{\mathbf{R}}  \otimes \mathbf{I}_{d \times d}$ to both sides of \eqref{eqn:genaddRK-compact} to obtain
\begin{eqnarray*}
 \left(\mathbf{I}_{d\hat{s} \times d\hat{s}} + \widehat{\mathbf{A}}\widehat{\mathbf{R}}  \otimes \mathbf{I}_{d \times d} \right) \widehat{Y} &=& \one_{\hat{s} \times 1} \otimes y_{n} \\
 && +  \left( \widehat{\mathbf{A}}\widehat{\mathbf{R}} \otimes \mathbf{I}_{d \times d} \right)\, 
\left( \, \widehat{Y} +  (h\,\widehat{\mathbf{R}}^{-1} \otimes \mathbf{I}_{d \times d}) \, \widehat{f}(\widehat{Y}) \, \right)\,.
\end{eqnarray*}
Using the notation of \eqref{eqn:am-conditions} this relation can be written in the equivalent form
\begin{equation}
\label{eqn:tmp-form-1}
\widehat{Y} =\bigl(\alpha(r) \otimes \mathbf{I}_{d \times d} \bigr) \cdot y_{n} +  \bigl( \beta(r) \otimes \mathbf{I}_{d \times d} \bigl)\, 
\left(  \widehat{Y} +  (h\,\widehat{\mathbf{R}}^{-1} \otimes \mathbf{I}_{d \times d}) \, \widehat{f}(\widehat{Y}) \right)\,.
\end{equation}
Denote a vector of norms by
\[
\left\llbracket \widehat{Y} \right\rrbracket  = 
\begin{bmatrix}
\left\Vert Y^{\{1\}}  \right\Vert , \dots, 
\left\Vert Y^{\{N\}}  \right\Vert ,
\left\Vert y_{n+1} \right\Vert
\end{bmatrix}^T\,.
\]
Since $r \in \mathcal{R}(\widehat{\mathbf{A}})$ we have that $\alpha(r) \ge 0$ and $\beta(r) \ge 0$. Taking norms in \eqref{eqn:tmp-form-1}
leads to 
\begin{eqnarray*}
\left\llbracket \widehat{Y} \right\rrbracket &\le&\alpha(r)\, \left\Vert y_{n}\right\Vert  +   \beta(r) \, 
\left\llbracket   \widehat{Y} +  (h\,\widehat{\mathbf{R}}^{-1} \otimes \mathbf{I}_{d \times d})\, \widehat{f}(\widehat{Y}) \right\rrbracket \,.
\end{eqnarray*}
Under the step size restriction \eqref{eqn:step-restriction-monotonicity} we have $h\,\left(r^{\{q\}}\right)^{-1} \le \rho^{\{q\}}$ for any $q=1,\dots,N$, and from 
\eqref{eqn:monotone-ode}
\[
\left\llbracket   \widehat{Y} +  (h\,\widehat{\mathbf{R}}^{-1} \otimes \mathbf{I}_{d \times d}) \, \widehat{f}(\widehat{Y}) \right\rrbracket \le 
\left\llbracket   \widehat{Y} \right\rrbracket \,.
\]
It follows that
\begin{eqnarray*}
\left\llbracket \widehat{Y} \right\rrbracket &\le&\alpha(r)\, \left\Vert y_{n}\right\Vert  +   \beta(r) \,  \left\llbracket   \widehat{Y}  \right\rrbracket\\
&=&
\left(\mathbf{I}_{\hat{s} \times \hat{s}} + \widehat{\mathbf{A}}\widehat{\mathbf{R}} \right)^{-1} \,  \one_{\hat{s} \times 1} \, \left\Vert y_{n}\right\Vert 
+ \left(  \mathbf{I}_{\hat{s} \times \hat{s}} - \left(\mathbf{I}_{\hat{s} \times \hat{s}} + \widehat{\mathbf{A}}\widehat{\mathbf{R}} \right)^{-1} \right)\,  \left\llbracket   \widehat{Y}  \right\rrbracket
\end{eqnarray*}
and
\begin{eqnarray*}
 \left(\mathbf{I}_{\hat{s} \times \hat{s}} + \widehat{\mathbf{A}}\widehat{\mathbf{R}} \right)^{-1}\, \left\llbracket \widehat{Y} \right\rrbracket &\le&
\left(\mathbf{I}_{\hat{s} \times \hat{s}} + \widehat{\mathbf{A}}\widehat{\mathbf{R}} \right)^{-1} \,  \one_{\hat{s} \times 1} \, \left\Vert y_{n}\right\Vert \,.
\end{eqnarray*}
Multiplication by the matrix $\mathbf{I}_{\hat{s} \times \hat{s}} + \widehat{\mathbf{A}}\widehat{\mathbf{R}} \ge 0$, whose entries are all non-negative, 
implies that
\[
\left\llbracket \widehat{Y} \right\rrbracket  \le  \one_{\hat{s} \times 1}\otimes \left\Vert y_{n}\right\Vert\,,
\]
and the monotonicity relation \eqref{eqn:monotonicity-conclusions} follows.
\end{proof}

\begin{example}[Monotonicity of classical IMEX RK]

For $N=2$ we have
\begin{equation}
\widehat{\mathbf{A}} = 
\begin{bmatrix}
 \mathbf{A}^{\{1,1\}} &\mathbf{A}^{\{1,2\}}  & 0 \\ 
 \mathbf{A}^{\{2,1\}} & \mathbf{A}^{\{2,2\}}  & 0 \\
(\mathbf{b}^{\{1\}})^T & (\mathbf{b}^{\{2\}})^T & 0
\end{bmatrix}\,,
\end{equation}
which we write in the equivalent form
\begin{equation}
\widehat{\mathbf{A}} = 
\begin{bmatrix}
 \mathbf{A}^{\{1,1\}}  & 0&\mathbf{A}^{\{1,2\}}  & 0 \\ 
(\mathbf{b}^{\{1\}})^T  & 0& (\mathbf{b}^{\{2\}})^T & 0 \\
 \mathbf{A}^{\{2,1\}}  & 0& \mathbf{A}^{\{2,2\}}  & 0 \\
(\mathbf{b}^{\{1\}})^T  & 0& (\mathbf{b}^{\{2\}})^T & 0
\end{bmatrix}
=
\begin{bmatrix}
 \widehat{\mathbf{A}}^{\{1,1\}}  & \widehat{\mathbf{A}}^{\{1,2\}}   \\ 
 \widehat{\mathbf{A}}^{\{2,1\}}  & \widehat{\mathbf{A}}^{\{2,2\}}  
\end{bmatrix}\,,
\end{equation}
where the extra stage does not contribute to the final solution.
%
%
In particular, for classical IMEX RK we have
\[
\widehat{\mathbf{A}} = 
\begin{bmatrix}
 \widehat{\mathbf{A}}^\EE &\widehat{\mathbf{A}}^\II  \\ 
 \widehat{\mathbf{A}}^\EE & \widehat{\mathbf{A}}^\II  
\end{bmatrix}\,.
\]
Consequently
\[
\mathbf{I} +  \widehat{\mathbf{A}} \widehat{\mathbf{R}} = 
\begin{bmatrix}
\mathbf{I}_{(s+1) \times (s+1)} + r^\E \widehat{\mathbf{A}}^\EE & r^\I\widehat{\mathbf{A}}^\II  \\ 
r^\E \widehat{\mathbf{A}}^\EE & \mathbf{I}_{(s+1) \times (s+1)} + r^\I\widehat{\mathbf{A}}^\II  
\end{bmatrix}\,.
\]
With
\[
\mathbf{S} = \mathbf{I}_{(s+1) \times (s+1)} + r^\E \widehat{\mathbf{A}}^\EE + r^\I \widehat{\mathbf{A}}^\II 
\]
we have
\begin{equation}
\alpha(r) = \left( \mathbf{I} + \widehat{\mathbf{A}} \widehat{\mathbf{R}} \right)^{-1}  \one_{(2s+2) \times 1} = 
\begin{bmatrix}
    \mathbf{S}^{-1} \one_{(s+1) \times 1} \\
    \mathbf{S}^{-1} \one_{(s+1) \times 1}
\end{bmatrix}\,,
\end{equation}
and
\begin{eqnarray*}
\beta(r) = \left( \mathbf{I} + \widehat{\mathbf{A}} \widehat{\mathbf{R}} \right)^{-1}  \widehat{\mathbf{A}} \widehat{\mathbf{R}} = 
\begin{bmatrix}
   r^\E \, \mathbf{S}^{-1} \widehat{\mathbf{A}}^\EE & \quad
   r^\I \, \mathbf{S}^{-1} \widehat{\mathbf{A}}^\II \\ 
   r^\E \, \mathbf{S}^{-1} \widehat{\mathbf{A}}^\EE & \quad
    r^\I \, \mathbf{S}^{-1} \widehat{\mathbf{A}}^\II
\end{bmatrix}\,.
\end{eqnarray*}
GARK absolute monotonicity conditions \eqref{eqn:am-conditions} are equivalent to the absolute traditional additive 
RK monotonicity conditions obtained by Higueras \cite{Higueras_2006_SSP-ARK}
\begin{eqnarray*}
 \mathbf{S}^{-1}\cdot \one_{(s+1) \times 1} \ge 0\,, \quad
 \mathbf{S}^{-1}\cdot \widehat{\mathbf{A}}^\EE \ge 0\,, \quad
 \mathbf{S}^{-1}\cdot \widehat{\mathbf{A}}^\II \ge 0\,.
\end{eqnarray*}
\end{example}

\begin{example}[Monotonicity of classical-transposed IMEX RK]

Here we have
\[
\widehat{\mathbf{A}} = 
\begin{bmatrix}
 \widehat{\mathbf{A}}^\EE &\widehat{\mathbf{A}}^\EE  \\ 
 \widehat{\mathbf{A}}^\II & \widehat{\mathbf{A}}^\II  
\end{bmatrix}
\]
and
\begin{eqnarray*}
\alpha(r) &=& \left( \mathbf{I} +  \widehat{\mathbf{A}} \widehat{\mathbf{R}} \right)^{-1}  \one_{(2s+2) \times 1}\\
 &=&  \begin{bmatrix}
    \left( \mathbf{I}_{(s+1) \times (s+1)} -  r^\I \,\widehat{\mathbf{A}}^\II +  r^\E \,\widehat{\mathbf{A}}^\EE \right)\, \mathbf{S}^{-1} \one_{(s+1) \times 1} \\
   \left( \mathbf{I}_{(s+1) \times (s+1)} - r^\E \,\widehat{\mathbf{A}}^\EE + r^\I \,\widehat{\mathbf{A}}^\II  \right)\, \mathbf{S}^{-1} \one_{(s+1) \times 1} 
\end{bmatrix} \\
&=& \begin{bmatrix}
    \one_{(s+1) \times 1} - 2\,r^\I \,\widehat{\mathbf{A}}^\II\, \mathbf{S}^{-1} \one_{(s+1) \times 1} \\
    \one_{(s+1) \times 1} - 2\,r^\E \,\widehat{\mathbf{A}}^\EE\, \mathbf{S}^{-1} \one_{(s+1) \times 1}
\end{bmatrix}\,,
\end{eqnarray*}
\begin{eqnarray*}
\beta(r) = \left( \mathbf{I} +  \widehat{\mathbf{A}} \widehat{\mathbf{R}} \right)^{-1}  \widehat{\mathbf{A}} \widehat{\mathbf{R}} = 
\begin{bmatrix}
   r^\E \,\widehat{\mathbf{A}}^\EE  \mathbf{S}^{-1} & \quad
    r^\E \,\widehat{\mathbf{A}}^\EE  \mathbf{S}^{-1} \\
        r^\I \,\widehat{\mathbf{A}}^\II  \mathbf{S}^{-1} & \quad
    r^\I \,\widehat{\mathbf{A}}^\II  \mathbf{S}^{-1} 
\end{bmatrix}\,.
\end{eqnarray*}
The GARK absolute monotonicity conditions \eqref{eqn:am-conditions} lead to
\begin{eqnarray*}
\widehat{\mathbf{A}}^\II\, \mathbf{S}^{-1}\cdot \one_{(s+1) \times 1} &\le& \frac{1}{2\,r^\I} \,  \one_{(s+1) \times 1}\,, \\
\widehat{\mathbf{A}}^\EE\, \mathbf{S}^{-1}\cdot \one_{(s+1) \times 1} &\le& \frac{1}{2\,r^\E} \,  \one_{(s+1) \times 1}\,, \\
 \widehat{\mathbf{A}}^\II\cdot  \mathbf{S}^{-1} &\ge& 0\,, \quad \textnormal{and} \\
 \widehat{\mathbf{A}}^\EE\cdot  \mathbf{S}^{-1} &\ge& 0\,.
\end{eqnarray*}
\end{example}

\begin{example}[A monotonic IMEX-GARK scheme]

Consider the following second order IMEX-GARK scheme.
The explicit method has order two, is strong stability preserving, and has an absolute stability radius $\mathcal{R}^\EE=1$
 \begin{subequations}
 \label{eqn:imex2-monotone}
 \renewcommand{\arraystretch}{1.25}
 \begin{equation}
 \mathbf{A}^\EE =  \begin{bmatrix}  0 & 0 \\ 1 & 0 \end{bmatrix} \,, \quad 
 \mathbf{b}^\E   =  \begin{bmatrix}  \frac{1}{2}\\ \frac{1}{2} \end{bmatrix} \,, \quad
\mathbf{c}^\EE  =   \begin{bmatrix} 0\\ 1 \end{bmatrix}\,.
\end{equation}
The implicit method has order two, is stiffly accurate, and has an absolute stability radius $\mathcal{R}^\II=\infty$.
The coefficients are $ \gamma =(1-\sqrt{2})/2$ and
 \begin{equation}
 \mathbf{A}^\II =  \begin{bmatrix}  \gamma & ~~0\\ 1 - \gamma & ~~\gamma \end{bmatrix} \,, \quad 
 \mathbf{b}^\I   =  \begin{bmatrix}  1 - \gamma\\ \gamma \end{bmatrix} \,, \quad
\mathbf{c}^\II  =   \begin{bmatrix}  \gamma\\ 1 \end{bmatrix}\,.
\end{equation}
The two methods cannot be paired as a classical IMEX Runge-Kutta method. The following coupling terms
 \begin{equation}
 \mathbf{A}^\EI =  \begin{bmatrix}  0 & ~~0\\ 1 & ~~0 \end{bmatrix} \,, \quad 
 \mathbf{A}^\IE =  \begin{bmatrix}  \gamma & ~~0\\  \alpha & ~~1 - \alpha  \end{bmatrix} \,,
\end{equation}
\renewcommand{\arraystretch}{1}
\end{subequations}
ensure that the GARK scheme is second order. This can be seen from the fact that the internal consistency conditions \eqref{eqn:simplifying-assumption-c} are satisfied.

The coupling has one free parameter $\alpha$. For $\alpha=1-\gamma$ the IMEX scheme is of transposed-classical type. 
Different values of $\alpha$ lead to different regions of monotonicity, as illustrated in Figure \ref{fig:imex2-monotone}. 
A numerical search has revealed that the largest region is obtained for $\alpha=0.5$.

\begin{figure}
\centering
\subfigure[$\alpha=0.25$]{
\includegraphics[width=0.3\textwidth,height=0.3\textwidth]{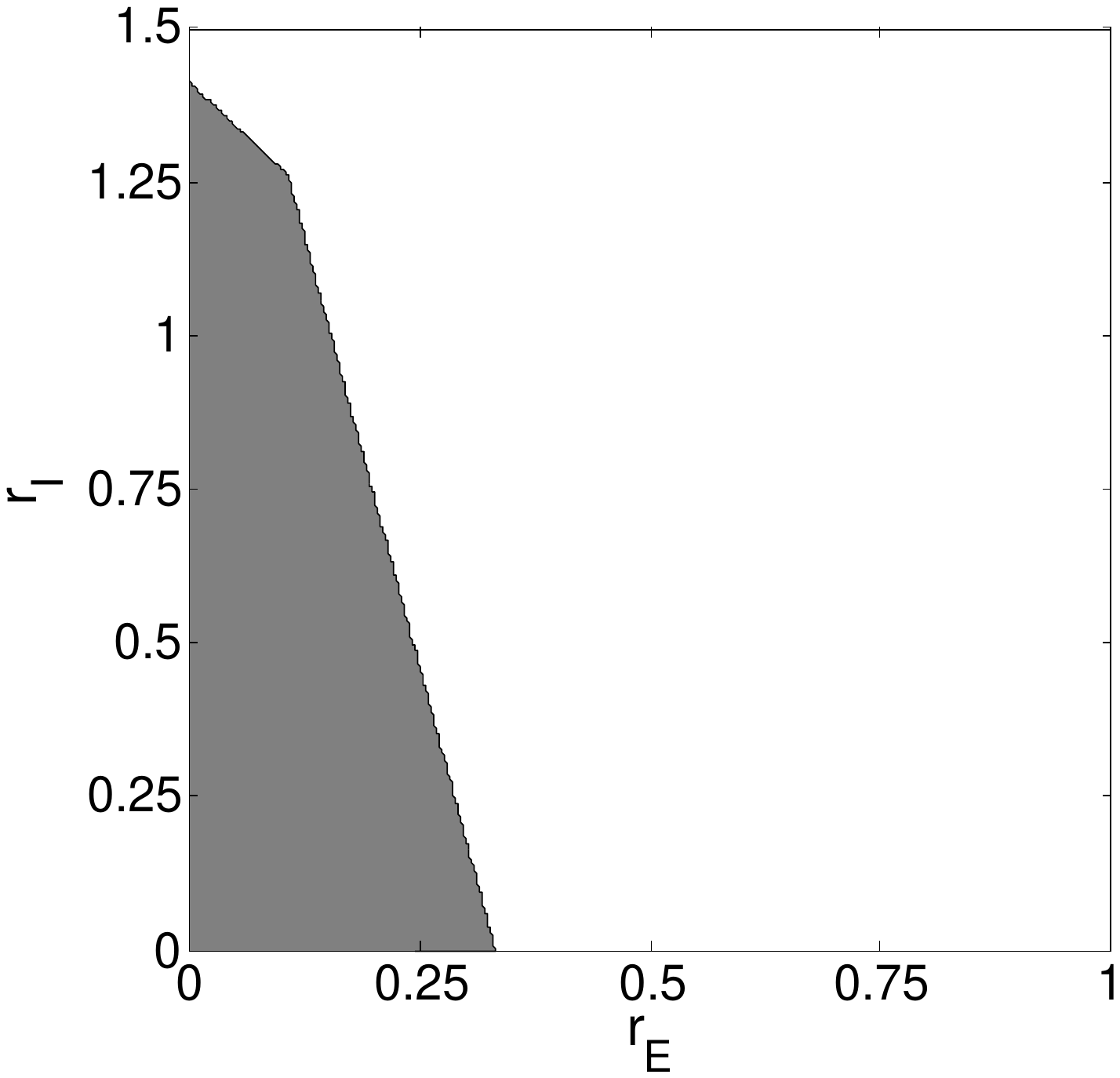}
}
\subfigure[$\alpha=0.5$]{
\includegraphics[width=0.3\textwidth,height=0.3\textwidth]{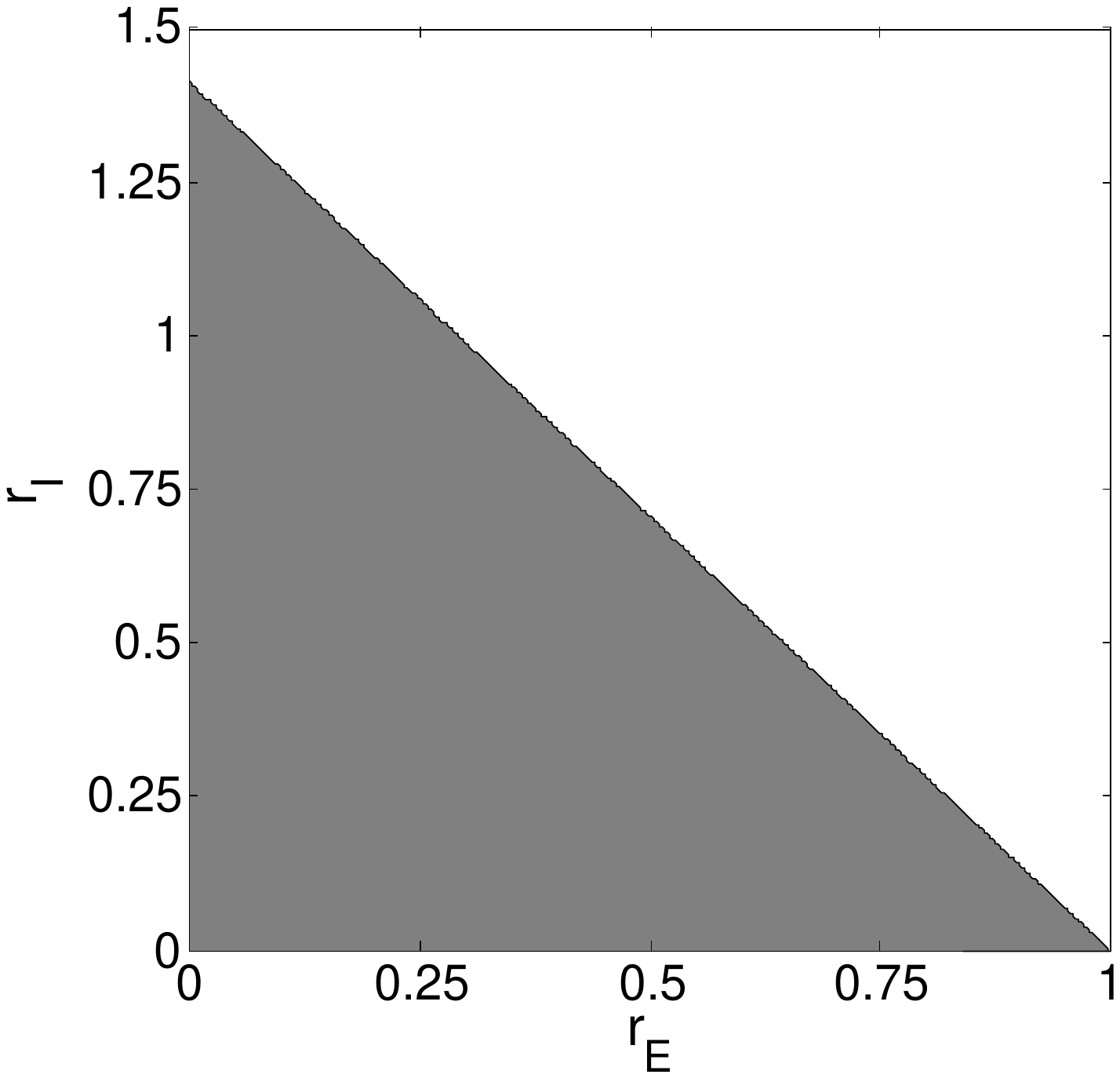}
}
\caption{Absolute monotonicity regions for the second order IMEX scheme \eqref{eqn:imex2-monotone}.}
\label{fig:imex2-monotone}
\end{figure}
The method has the following generalized Butcher tableau:
\renewcommand{\arraystretch}{1.25}
\[
\begin{array}{c|c|c|c}
\mathbf{c}^\EE & \mathbf{A}^\EE &\mathbf{A}^\EI & \mathbf{c}^\EI  \\
\hline 
\mathbf{c}^\IE & \mathbf{A}^\IE & \mathbf{A}^\II  & \mathbf{c}^\II \\
\hline
& \mathbf{b}^\E & \mathbf{b}^\I 
\end{array}
~~:=~~
\begin{array}{c|cc|cc|c} 
0 ~                 &~ 0 ~&~ 0                &~ 0 ~&~ 0 ~                                                               &~ 0\\ 
1 ~                &~ 1 ~&~ 0 ~             &~  1~&~ 0 ~                                                 &~ 1 \\  
\hline
\gamma ~&                ~ \gamma ~&~ 0 ~                           &~ \gamma ~&~ 0 ~                                    &~ \gamma\\ 
1 ~&                ~ \alpha ~&~ 1-\alpha ~             &~ 1-\gamma ~&~ \gamma ~            &~ 1 \\ 
\hline
 ~&                               ~ \frac{1}{2} ~&~ \frac{1}{2} ~&~ ~1-\gamma ~&~ \gamma ~&~  \end{array}~.
\]
\renewcommand{\arraystretch}{1}

\end{example}

Monotonicity conditions for several multirate and partitioned explicit Runge-Kutta schemes are also discussed by Hundsdorfer, Mozartova, and Savcenco in a recent report \cite{Hundsdorfer_2013_mr-monotonicity}.

\section{Implicit-implicit GARK schemes}\label{sec:imim}

We now consider systems \eqref{eqn:additive-ode} with two way partitioned right hand sides
 where both components $f$ and $g$ are stiff. We apply a
two way partitioned GARK method \eqref{eqn:GARK}
\begin{subequations}
\label{eqn:imimRK}
\begin{eqnarray}
\label{eqn:imim-explicit-stage}
Y_i &=& y_{n} + h  \sum_{j=1}^{i}  a_{i,j}^{\{1,1\}} \, f(Y_j)   + h  \sum_{j=1}^{i-1} a_{i,j}^{\{1,2\}} \, g(Z_j)\,, ~~ i = 1,\dots, s^{\{1\}}, ~~\\
\label{eqn:imim-implicit-stage}
Z_i  &=& y_{n} + h  \sum_{j=1}^{i} a_{i,j}^{\{2,1\}} \, f(Y_j)   + h  \sum_{j=1}^{i}  a_{i,j}^{\{2,2\}} \, g(Z_j)\,, 
~~ i = 1,\dots, s^{\{2\}},\\
\label{eqn:imim-sol}
y_{n+1} &=& y_n + h \sum_{i=1}^{s^{\{1\}}}  b_{i}^{\{1\}} \, f(Y_i)  + h \sum_{i=1}^{s^{\{2\}}} b_{i}^{\{2\}} \, g(Z_i)\,.
\end{eqnarray}
\end{subequations}
The scheme \eqref{eqn:imimRK} has the following characteristics:
\begin{itemize}
\item The discretization is implicit-implicit (IMIM); stages \eqref{eqn:imim-explicit-stage} are implicit in $Y_i$, while stages 
\eqref{eqn:imex-implicit-stage}  are implicit in $Z_i$.
\item One solves in succession nonlinear subsystems corresponding to each individual component.
\item If each of the implicit schemes is algebraically stable, and the GARK scheme is stability decoupled, 
then the separation of subsystem solutions does not affect the algebraic stability  
of the overall method.
\end{itemize}

\begin{example}[An algebraically stable, stability-decoupled DIRK-DIRK method]
 
We consider a pair of DIRK schemes and compute the corresponding coupling conditions.
The first method is second order accurate and algebraically stable with $\mathbf{P}^{\{1,1\}} =\mathbf{0}$
\renewcommand{\arraystretch}{1.25}
\[
\mathbf{A}^{\{1,1\}}  = \left[\begin{array}{cc} \frac{1}{8} & ~~0\\ \frac{1}{4} & ~~\frac{3}{8} \end{array}\right]\,,\quad
\mathbf{b}^{\{1\}}   = \left[\begin{array}{c} \frac{1}{4}\\ \frac{3}{4} \end{array}\right]\,, \quad
\mathbf{c}^{\{1,1\}}  = \left[\begin{array}{c} \frac{1}{8}\\ \frac{5}{8} \end{array}\right]\,.
\]
The second method is second order accurate and algebraically stable with $\mathbf{P}^{\{2,2\}} =\mathbf{0}$
\[
\mathbf{A}^{\{2,2\}}  = \left[\begin{array}{cc} \frac{1}{3} & ~~0\\ \frac{2}{3} & ~~\frac{1}{6} \end{array}\right]\,, \quad
\mathbf{b}^{\{2\}}  =   \left[\begin{array}{c} \frac{2}{3}\\ \frac{1}{3} \end{array}\right]\,, \quad
\mathbf{c}^{\{2,2\}} =   \left[\begin{array}{c} \frac{1}{3}\\ \frac{5}{6} \end{array}\right]\,.
\]
The coupling coefficients
\[
\mathbf{A}^{\{1,2\}}  = \left[\begin{array}{cc} 0 & ~~0\\ \frac{2}{3} & ~~0 \end{array}\right]\,, \quad
\mathbf{c}^{\{1,2\}} = \left[\begin{array}{c} 0\\ \frac{2}{3} \end{array}\right]\,,
\]
and
\[
\mathbf{A}^{\{2,1\}}  = \left[\begin{array}{cc} \frac{1}{4} & ~~0\\ \frac{1}{4} & ~~\frac{3}{4} \end{array}\right]\,, \quad
\mathbf{c}^{\{2,1\}}  = \left[\begin{array}{c} \frac{1}{4}\\ 1 \end{array}\right]\,,
\]
ensure that the GARK method is second order accurate and is stability-decoupled, $\mathbf{P}^{\{1,2\}} =(\mathbf{P}^{\{2,1\}})^T=\mathbf{0}$.
The generalized Butcher tableau \eqref{eqn:generalized-tableau} of the scheme reads:
\[
\begin{array}{c|c|c|c}
\mathbf{c}^{\{1,1\}} & \mathbf{A}^{\{1,1\}} &\mathbf{A}^{\{1,2\}} & \mathbf{c}^{\{1,2\}} \\
\hline 
\mathbf{c}^{\{2,1\}} & \mathbf{A}^{\{2,1\}} & \mathbf{A}^{\{2,2\}}  & \mathbf{c}^{\{2,2\}} \\
\hline
& \mathbf{b}^{\{1\}} & \mathbf{b}^{\{2\}}
\end{array}
~~:=~~
\begin{array}{c|cc|cc|c} 
\frac{1}{8} ~&~ \frac{1}{8} ~&~ 0 ~&~ 0 ~&~ 0 ~&~0 \\ 
\frac{5}{8} ~&~ \frac{1}{4} ~&~ \frac{3}{8} ~&~ \frac{2}{3} ~&~ 0 ~&~  \frac{2}{3} \\ 
\hline
\frac{1}{4}  ~&~ \frac{1}{4} ~&~ 0 ~&~ \frac{1}{3} ~&~ 0 ~&~ \frac{1}{3}\\ 
1 ~&~ \frac{1}{4} ~&~ \frac{3}{4}  ~&~ \frac{2}{3} ~&~ \frac{1}{6} ~&~ \frac{5}{6}\\
\hline
~&~ \frac{1}{4} ~&~ \frac{3}{4} ~&~ \frac{2}{3} ~&~ \frac{1}{3} ~&~  
\end{array}~.
\]
\renewcommand{\arraystretch}{1}
The method proceeds as follows:
\begin{eqnarray*}
\underline{Y_1} &=& y_{n} + \frac{1}{8}\, h \, f(\underline{Y_1})  \,,\\
\underline{Z_1}  &=& y_{n} + \frac{1}{4}\, h \, f(Y_1)   + \frac{1}{3}\, h \, g(\underline{Z_1})\,, \\
\underline{Y_2} &=& y_{n} + \frac{1}{4}\, h \, f(Y_1)  + \frac{3}{8}\, h \, f(\underline{Y_2})  + \frac{2}{3}\, h \, g(Z_1)\,,\\
\underline{Z_2}  &=& y_{n} + \frac{1}{4}\, h \, f(Y_1)  + \frac{3}{4}\, h \, f(Y_2) + \frac{2}{3}\, h \, g(Z_1) +\frac{1}{6}\, h \, g(\underline{Z_2})\,, \\
y_{n+1} &=& y_n + \frac{1}{4} \,h\, f(Y_1)  + \frac{3}{4}\,h \, f(Y_2) + \frac{2}{3} \, h\, g(Z_1) + \frac{1}{3}\, h \, g(Z_2)\,,
\end{eqnarray*}
where in each stage one nonlinear system is solved for the underlined variable.

\end{example}

\section{Conclusions and future work}\label{sec:conclusions}

This work develops a generalized additive Runge-Kutta family of methods. 
The new GARK schemes extend the class of additively partitioned Runge-Kutta methods by allowing for different stage values as  arguments of different components of the right hand side.

The theoretical investigations develop order conditions for the GARK family using the NB-series theory. We carry out linear and nonlinear stability analyses, extend the definition of algebraic stability to the new generalized family of schemes, and show that it is possible to construct stability-decoupled methods. We also perform a monotonicity analysis, extend the concept of absolute monotonicity to our new family, and prove monotonic behavior under step size restrictions.

We develop  implicit-explicit GARK schemes in the new framework. We show that classical implicit-explicit Runge-Kutta methods are a particular subset, and develop a new set of transposed-classical schemes. A theoretical investigation of the stiff convergence motivates an extension of the stiff accuracy concept. We construct implicit-implicit GARK methods
where the nonlinear system at each stage involves only one component of the system.

Future work will search for practical methods of high order, and will test their performance on relevant problems. Partitioned implicit-implicit methods \eqref{eqn:imimRK} with the same coefficients on the diagonal of each method (SDIRK type) are desirable, as  are orders higher than two and a partitioning of the right hand side into three or more components. Also stiffly accurate methods with $\mathbf{P}^{\{i,i\}}>0$ and $\mathbf{P}^{\{i,j\}}=0$ when $i \ne j$ are desirable for systems driven by multiple stiff physical processes. We are developing multirate schemes \cite{Guenther_2013_GARK-MR} as well as symplectic schemes  \cite{Guenther_2013_GARK-Hamiltonian} based on the generalized additive Runge-Kutta framework presented here.

%

\bibliographystyle{siam}

\begin{thebibliography}{10}

\bibitem{SaSe1997}
{\sc A.~L. Araujo, A.~Murua, and J.~M. Sanz-Serna}, {\em Symplectic methods
  based on decompositions}, SIAM Journal on Numerical Analysis, 34 (1997),
  pp.~1926--1947.

\bibitem{Cooper_1983_ARK}
{\sc G.J. Cooper and A.~Sayfy}, {\em Additive {Runge-Kutta} methods for stiff
  ordinary differential equations}, Mathematics of Computation, 40 (1983),
  pp.~207--218.

\bibitem{Garcia-Celayeta_2006_ARK-monotonicity}
{\sc B.~Garcia-Celayeta, I.~Higueras, and T.~Roldan}, {\em
  Contractivity/monotonicity for additive {R}unge-{K}utta methods: inner
  product norms}, Applied Numerical Mathematics, 56 (2006), pp.~862--878.

\bibitem{Gottlieb_2001_SSP-review}
{\sc S~Gottlieb, CW~Shu, and E~Tadmor}, {\em Strong stability-preserving
  high-order time discretization methods}, SIAM Review, 43 (2001), pp.~89--112.

\bibitem{Guenther_2013_GARK-Hamiltonian}
{\sc M.~Guenther and A.~Sandu}, {\em {GARK} methods for {Hamiltonian} systems}.
\newblock In preparation, 2013.

\bibitem{Guenther_2013_GARK-MR}
\leavevmode\vrule height 2pt depth -1.6pt width 23pt, {\em Multirate {GARK}
  methods}.
\newblock In preparation, 2013.

\bibitem{Hairer_1981_Pseries}
{\sc E.~Hairer}, {\em Order conditions for numerical methods for partitioned
  ordinary differential equations}, Numerische Mathematik, 36 (1981),
  pp.~431--445.

\bibitem{Hairer_book_II}
{\sc E.~Hairer and G.~Wanner}, {\em Solving Ordinary Differential Equations
  {II}: Stiff and Differential-Algebraic Problems}, Springer, 1993.

\bibitem{Higueras_2004_SSP}
{\sc I.~Higueras}, {\em On strong stability preserving time discretization
  methods}, Journal of Scientific Computing, 21 (2004), pp.~193--223.

\bibitem{Higueras_2005_monotonicity}
\leavevmode\vrule height 2pt depth -1.6pt width 23pt, {\em Monotonicity for
  {Runge-Kutta} methods: inner product norms}, Journal of Scientific Computing,
  24 (2005), pp.~97--117.

\bibitem{Higueras_2005_SSP-representation}
\leavevmode\vrule height 2pt depth -1.6pt width 23pt, {\em Representations of
  {Runge-Kutta} methods and strong stability preserving methods}, SIAM Journal
  on Numerical Analysis, 43 (2005), pp.~924--948.

\bibitem{Higueras_2006_SSP-ARK}
\leavevmode\vrule height 2pt depth -1.6pt width 23pt, {\em Strong stability for
  additive {Runge-Kutta} methods}, SIAM Journal on Numerical Analysis, 44
  (2006), pp.~1735--1758.

\bibitem{Higueras_2009_SSP-ARK}
\leavevmode\vrule height 2pt depth -1.6pt width 23pt, {\em Characterizing
  strong stability preserving additive {Runge-Kutta} methods}, Journal of
  Scientific Computing, 39 (2009), pp.~115--128.

\bibitem{Higueras_2013_scicade}
{\sc I.~Higueras and T.~Roldan}, {\em Efficient implicit-explicit {Runge-Kutta}
  methods with low storage requirements}.
\newblock Presentation at SciCADE 2013, Valladolid, Spain, September 2013.

\bibitem{Hundsdorfer_2013_mr-monotonicity}
{\sc W.~Hundsdorfer, A.~Mozartova, and V.~Savcenco}, {\em Monotonicity
  conditions for multirate and partitioned explicit {Runge-Kutta} schemes}.
\newblock CWI report, unpublished, 2013.

\bibitem{Kennedy_2003}
{\sc A.C. Kennedy and M.H. Carpenter}, {\em Additive {R}unge-{K}utta schemes
  for convection-diffusion-reaction equations}, Appl. Numer. Math., 44 (2003),
  pp.~{139--181}.

\bibitem{Kraaijevanger_1991_contractivity}
{\sc J.F.B.M. Kraaijevanger}, {\em Contractivity of {Runge-Kutta} methods},
  {BIT Numerical Mathematics}, 31 (1991), pp.~482--528.

\bibitem{Kvaerno_2004_ESDIRK}
{\sc A.~Kvaerno}, {\em {Singly diagonally implicit Runge-Kutta methods with an
  explicit first stage}}, BIT Numerical Mathematics, 44 (2004), pp.~489--502.

\bibitem{Prothero_1974_PR}
{\sc A.~Prothero and A.~Robinson}, {\em On the stability and accuracy of
  one-step methods for solving stiff systems of ordinary differential
  equations}, Mathematics of Computation, 28 (1974), pp.~145--162.

\bibitem{Rentrop_1985}
{\sc P.~Rentrop}, {\em Partitioned {Runge-Kutta} methods with stepsize control
  and stiffness detection}, Numerische Mathematik, 47 (1985), pp.~545--564.

\bibitem{Rice_1960}
{\sc J.R. Rice}, {\em Split {R}unge-{K}utta methods for simultaneous
  equations}, Journal of Research of the National Institute of Standards and
  Technology, 64 (1960).

\bibitem{Spijker_2007}
{\sc M.N. Spijker}, {\em Stepsize conditions for general monotonicity in
  numerical initial value problems}, SIAM Journal on Numerical Analysis, 45
  (2007), pp.~1226--1245.

\bibitem{Ascher_1997_IMEX_RK}
{\sc {U.M. Ascher and S.J. Ruuth and R.J. Spiteri}}, {\em {Implicit-explicit
  Runge-Kutta methods for time-dependent partial differential equations}},
  {Applied Numerical Mathematics}, 25 ({1997}), pp.~151--167.

\bibitem{Weiner_1993_partitioning}
{\sc R.~Weiner, M.~Arnold, P.~Rentrop, and K.~Strehmel}, {\em Partitioning
  strategies in {Runge-Kutta} type methods}, IMA Journal on Numerical Analysis,
  13 (1993), pp.~303--319.

\end{thebibliography}

\newpage
\appendix

\section{GARK order conditions}\label{sec:oder-conditions-detailed}

The specific conditions for orders one to four are as follows.
\begin{subequations} 
\label{eqn:GARK-order-conditions-1-to-4-detail}
\par\noindent{Order 1:}
\begin{equation} 
\label{eqn:GARK-order-conditions-1-detail}
\sum_{i=1}^{s^{\{\sigma\}}} b_i^{\{\sigma\}} = 1,  \quad \forall\; \sigma=1,\ldots, N.
\end{equation}
{Order 2:}
\begin{eqnarray}
\label{eqn:GARK-order-conditions-2-detail}
 \sum_{i=1}^{s^{\{\sigma\}}} \sum_{j=1}^{s^{\{\nu\}}} b_i^{\{\sigma\}} a_{i,j}^{\{\sigma,\nu\}} & = & \frac{1}{2},  \quad \forall \; \sigma, \nu=1,\ldots, N. 
\end{eqnarray}
{Order 3:}
\begin{eqnarray}
\label{eqn:GARK-order-conditions-3a-detail}
 \sum_{i=1}^{s^{\{\sigma\}}} \sum_{j=1}^{s^{\{\nu\}}} \sum_{k=1}^{s^{\{\mu\}}} b_i^{\{\sigma\}} a_{i,j}^{\{\sigma,\nu\}}  a_{i,k}^{\{\sigma,\mu\}}
& = & \frac{1}{3},  \\
\label{eqn:GARK-order-conditions-3b-detail}
 \sum_{i=1}^{s^{\{\sigma\}}} \sum_{j=1}^{s^{\{\nu\}}} \sum_{k=1}^{s^{\{\mu\}}} b_i^{\{\sigma\}} a_{i,j}^{\{\sigma,\nu\}}  a_{j,k}^{\{\nu, \mu\}} 
& = & \frac{1}{6},  \\
\nonumber
\qquad \forall \; \sigma,\nu, \mu =1,\ldots, N. &&
\end{eqnarray}
{Order 4:}
\begin{eqnarray}
\label{eqn:GARK-order-conditions-4a-detail}
 \sum_{i=1}^{s^{\{\sigma\}}} \sum_{j=1}^{s^{\{\nu\}}} \sum_{\ell=1}^{s^{\{\lambda\}}}  \sum_{m=1}^{s^{\{\mu\}}} b_i^{\{\sigma\}} a_{i,j}^{\{\sigma,\nu\}}  a_{i,l}^{\{\sigma,\lambda\}}  a_{i,m}^{\{\sigma,\mu\}}
& = & \frac{1}{4}, \\
\label{eqn:GARK-order-conditions-4b-detail}
 \sum_{i=1}^{s^{\{\sigma\}}} \sum_{j=1}^{s^{\{\nu\}}} \sum_{\ell=1}^{s^{\{\lambda\}}}  \sum_{m=1}^{s^{\{\mu\}}} b_i^{\{\sigma\}} a_{i,j}^{\{\sigma,\nu\}}  a_{j,\ell}^{\{\nu,\lambda\}}  a_{i,m}^{\{\sigma,\mu\}}
& = & \frac{1}{8},  \\
\label{eqn:GARK-order-conditions-4c-detail}
 \sum_{i=1}^{s^{\{\sigma\}}} \sum_{j=1}^{s^{\{\nu\}}} \sum_{\ell=1}^{s^{\{\lambda\}}}  \sum_{m=1}^{s^{\{\mu\}}} b_i^{\{\sigma\}} a_{i,j}^{\{\sigma,\nu\}}  a_{j,\ell}^{\{\nu,\lambda\}}  a_{j,m}^{\{\nu,\mu\}}
& = & \frac{1}{12},  \\
\label{eqn:GARK-order-conditions-4d-detail}
 \sum_{i=1}^{s^{\{\sigma\}}} \sum_{j=1}^{s^{\{\nu\}}} \sum_{\ell=1}^{s^{\{\lambda\}}}  \sum_{m=1}^{s^{\{\mu\}}} b_i^{\{\sigma\}} a_{i,j}^{\{\sigma,\nu\}}  a_{j,\ell}^{\{\nu,\lambda\}}  a_{\ell,m}^{\{\lambda,\mu\}}
& = & \frac{1}{24},  \\
\nonumber
\quad \forall\; \sigma,\nu,\lambda, \mu =1,\ldots, N\,. &&
\end{eqnarray}
\end{subequations}

\newpage
\section{GARK IMEX third order conditions}\label{sec:oder-conditions-imex}

Each of the implicit and explicit methods $\left(\mathbf{A}^{\{\sigma,\sigma\}}, \mathbf{b}^{\{\sigma\}}\right)$ and 
$\mathbf{c}^{\{\sigma,\sigma\}} = \mathbf{A}^{\{\sigma,\sigma\}} \cdot \one^{\{\sigma\}} $ has to satisfy the corresponding order conditions for $\sigma  \in \{ \textsc{e},\textsc{i} \}$. In addition, the following  coupling conditions are required for third order accuracy.

The IMEX order two coupling conditions are:
\begin{subequations}
\label{eqn:coupling-2}
\begin{eqnarray}
\label{eqn:coupling-2a}
\mathbf{b}^\E\,^T\cdot \mathbf{c}^\EI& = & \frac{1}{2}\,, \\ 
\label{eqn:coupling-2b}
\mathbf{b}^\I\,^T\cdot \mathbf{c}^\IE& = & \frac{1}{2} \,.
\end{eqnarray}
\end{subequations}
These are equivalent to the requirement that the coupling methods $\left(\mathbf{A}^{\{\sigma,\nu\}}, \mathbf{b}^{\{\sigma\}}\right)$ for $\sigma \ne \mu$ are second order.

The IMEX order three coupling conditions read:
\begin{subequations}
\label{eqn:coupling-3}
\begin{eqnarray}
\mathbf{b}^\E\,^T \cdot \left( \mathbf{c}^\EE \mathbf{c}^\EI\right) & = & \frac{1}{3},   \\ 
\mathbf{b}^\E\,^T \cdot \mathbf{A}^\EE  \cdot \mathbf{c}^\EI  & = & \frac{1}{6}, \\ 
\mathbf{b}^\E\,^T \cdot \left( \mathbf{c}^\EI \mathbf{c}^\EE\right) & = & \frac{1}{3},   \\ 
\mathbf{b}^\E\,^T \cdot \mathbf{A}^\EI  \cdot \mathbf{c}^\IE  & = & \frac{1}{6}, \\ 
\mathbf{b}^\E\,^T \cdot \left( \mathbf{c}^\EI \mathbf{c}^\EI\right) & = & \frac{1}{3},   \\ 
\mathbf{b}^\E\,^T \cdot \mathbf{A}^\EI  \cdot \mathbf{c}^\II  & = & \frac{1}{6}, \\ 
\mathbf{b}^\I\,^T \cdot \left( \mathbf{c}^\IE \mathbf{c}^\IE\right) & = & \frac{1}{3},   \\ 
\mathbf{b}^\I\,^T \cdot \mathbf{A}^\IE  \cdot \mathbf{c}^\EE  & = & \frac{1}{6}, \\ 
\mathbf{b}^\I\,^T \cdot \left( \mathbf{c}^\IE \mathbf{c}^\II\right) & = & \frac{1}{3},   \\ 
\mathbf{b}^\I\,^T \cdot \mathbf{A}^\IE  \cdot \mathbf{c}^\EI  & = & \frac{1}{6}, \\ 
\mathbf{b}^\I\,^T \cdot \left( \mathbf{c}^\II \mathbf{c}^\IE\right) & = & \frac{1}{3},   \\ 
\mathbf{b}^\I\,^T \cdot \mathbf{A}^\II  \cdot \mathbf{c}^\IE  & = & \frac{1}{6}. 
\end{eqnarray}
\end{subequations}

\newpage
\section{GARK IMEX fourth order conditions}\label{sec:oder-conditions-imex-4}

The order four IMEX order conditions \eqref{eqn:imex-coupling-order-4} under the simplifying assumption \eqref{eqn:simplifying-assumption-c} are as follows.
We have two compatibility relation between the implicit and the explicit methods:
\begin{eqnarray*}
\mathbf{b}^\E\,^T \cdot \mathbf{A}^\EE\cdot \mathbf{A}^\II\cdot \mathbf{c}^\I & = & \frac{1}{24} \,, \\
\mathbf{b}^\I\,^T \cdot \mathbf{A}^\II\cdot \mathbf{A}^\EE\cdot \mathbf{c}^\E & = & \frac{1}{24} \,, 
\end{eqnarray*}
and 16 conditions involving the coupling terms:
\begin{eqnarray*}
\mathbf{b}^\E\,^T \cdot \mathbf{A}^\EE\cdot \mathbf{A}^\IE\cdot \mathbf{c}^\I & = & \frac{1}{24} \,, \\ 
\mathbf{b}^\E\,^T \cdot \mathbf{A}^\EE\cdot \mathbf{A}^\EI\cdot \mathbf{c}^\E & = & \frac{1}{24}\,,  \\ 
\left(\mathbf{b}^\E \mathbf{c}^\E \right)^T \cdot \mathbf{A}^\EI   \cdot \mathbf{c}^\I  & = & \frac{1}{8}\,,  \\ 
\mathbf{b}^\E\,^T \cdot \mathbf{A}^\EI  \cdot\left( \mathbf{c}^\I  \mathbf{c}^\I \right)& = & \frac{1}{12}\,, 
\end{eqnarray*}
\begin{eqnarray*}
\mathbf{b}^\E\,^T \cdot \mathbf{A}^\EI\cdot \mathbf{A}^\EE\cdot \mathbf{c}^\E & = & \frac{1}{24}\,,  \\ 
\mathbf{b}^\E\,^T \cdot \mathbf{A}^\EI\cdot \mathbf{A}^\IE\cdot \mathbf{c}^\I & = & \frac{1}{24}\,,  \\ 
\mathbf{b}^\E\,^T \cdot \mathbf{A}^\EI\cdot \mathbf{A}^\EI\cdot \mathbf{c}^\E & = & \frac{1}{24}\,,  \\ 
\mathbf{b}^\E\,^T \cdot \mathbf{A}^\EI\cdot \mathbf{A}^\II\cdot \mathbf{c}^\I & = & \frac{1}{24}\,,   
\end{eqnarray*}
\begin{eqnarray*}
\left(\mathbf{b}^\I \mathbf{c}^\I \right)^T \cdot \mathbf{A}^\IE   \cdot \mathbf{c}^\E  & = & \frac{1}{8}\,,  \\ 
\mathbf{b}^\I\,^T \cdot \mathbf{A}^\IE  \cdot\left( \mathbf{c}^\E  \mathbf{c}^\E \right)& = & \frac{1}{12}\,,  \\ 
\mathbf{b}^\I\,^T \cdot \mathbf{A}^\IE\cdot \mathbf{A}^\EE\cdot \mathbf{c}^\E & = & \frac{1}{24} \,, \\ 
\mathbf{b}^\I\,^T \cdot \mathbf{A}^\IE\cdot \mathbf{A}^\IE\cdot \mathbf{c}^\I & = & \frac{1}{24} \,, 
\end{eqnarray*}
\begin{eqnarray*}
\mathbf{b}^\I\,^T \cdot \mathbf{A}^\IE\cdot \mathbf{A}^\EI\cdot \mathbf{c}^\E & = & \frac{1}{24}\,,  \\ 
\mathbf{b}^\I\,^T \cdot \mathbf{A}^\IE\cdot \mathbf{A}^\II\cdot \mathbf{c}^\I & = & \frac{1}{24}\,,  \\ 
\mathbf{b}^\I\,^T \cdot \mathbf{A}^\II\cdot \mathbf{A}^\IE\cdot \mathbf{c}^\I & = & \frac{1}{24}\,,  \\ 
\mathbf{b}^\I\,^T \cdot \mathbf{A}^\II\cdot \mathbf{A}^\EI\cdot \mathbf{c}^\E & = & \frac{1}{24} \,.
\end{eqnarray*}

When both simplifying assumptions \eqref{eqn:simplifying-assumption-c} and \eqref{eqn:simplifying-assumption-additional} are satisfied,
each of the coupling methods $(\mathbf{A}^\IE,\mathbf{b},\mathbf{c})$ and $(\mathbf{A}^\EI,\mathbf{b},\mathbf{c})$
needs to be fourth order accurate in its own right. In addition the following 12 coupling conditions are required:
\begin{eqnarray*}
\mathbf{b}^T \cdot \mathbf{A}^\EE\cdot \mathbf{A}^\II\cdot \mathbf{c} & = & \frac{1}{24}\,,  \\
\mathbf{b}^T \cdot \mathbf{A}^\EE\cdot \mathbf{A}^\IE\cdot \mathbf{c} & = & \frac{1}{24}\,,  \\ 
\mathbf{b}^T \cdot \mathbf{A}^\EE\cdot \mathbf{A}^\EI\cdot \mathbf{c} & = & \frac{1}{24} \,,  
\end{eqnarray*}
\begin{eqnarray*}
\mathbf{b}^T \cdot \mathbf{A}^\II\cdot \mathbf{A}^\IE\cdot \mathbf{c} & = & \frac{1}{24}\,,  \\ 
\mathbf{b}^T \cdot \mathbf{A}^\II\cdot \mathbf{A}^\EI\cdot \mathbf{c} & = & \frac{1}{24}\,,  \\
\mathbf{b}^T \cdot \mathbf{A}^\II\cdot \mathbf{A}^\EE\cdot \mathbf{c} & = & \frac{1}{24} \,, 
\end{eqnarray*}
\begin{eqnarray*}
\mathbf{b}^T \cdot \mathbf{A}^\EI\cdot \mathbf{A}^\EE\cdot \mathbf{c} & = & \frac{1}{24}\,,  \\ 
\mathbf{b}^T \cdot \mathbf{A}^\EI\cdot \mathbf{A}^\IE\cdot \mathbf{c} & = & \frac{1}{24} \,, \\ 
\mathbf{b}^T \cdot \mathbf{A}^\EI\cdot \mathbf{A}^\II\cdot \mathbf{c} & = & \frac{1}{24} \,,  
\end{eqnarray*}
\begin{eqnarray*}
\mathbf{b}^T \cdot \mathbf{A}^\IE\cdot \mathbf{A}^\EE\cdot \mathbf{c} & = & \frac{1}{24} \,, \\ 
\mathbf{b}^T \cdot \mathbf{A}^\IE\cdot \mathbf{A}^\EI\cdot \mathbf{c} & = & \frac{1}{24}\,,  \\ 
\mathbf{b}^T \cdot \mathbf{A}^\IE\cdot \mathbf{A}^\II\cdot \mathbf{c} & = & \frac{1}{24} \,. 
\end{eqnarray*}
Moreover, if the two coupling terms are equal, $\mathbf{A}^\IE=\mathbf{A}^\EI=\mathbf{A}^{\rm \{cpl\}}$, then 
$(\mathbf{A}^{\rm \{cpl\}},\mathbf{b},\mathbf{c})$ needs to be fourth order accurate. The remaining six coupling conditions are
\begin{eqnarray*}
\mathbf{b}^T \cdot \mathbf{A}^\EE\cdot \mathbf{A}^\II\cdot \mathbf{c} & = & \frac{1}{24}\,, \\
\mathbf{b}^T \cdot \mathbf{A}^\EE\cdot \mathbf{A}^{\rm \{cpl\}}\cdot \mathbf{c} & = & \frac{1}{24}\,, \\ 
\mathbf{b}^T \cdot \mathbf{A}^\II\cdot \mathbf{A}^\EE\cdot \mathbf{c} & = & \frac{1}{24}\,,  \\
\mathbf{b}^T \cdot \mathbf{A}^\II\cdot \mathbf{A}^{\rm \{cpl\}}\cdot \mathbf{c} & = & \frac{1}{24}\,, \\
\mathbf{b}^T \cdot \mathbf{A}^{\rm \{cpl\}}\cdot \mathbf{A}^\EE\cdot \mathbf{c} & = & \frac{1}{24} \,, \\ 
\mathbf{b}^T \cdot \mathbf{A}^{\rm \{cpl\}}\cdot \mathbf{A}^\II\cdot \mathbf{c} & = & \frac{1}{24}  \,.
\end{eqnarray*}

\newpage
\section{Extended Prothero-Robinson analysis}

We consider the modified Prothero-Robinson (PR) \cite{Prothero_1974_PR} test problem written as a split system \eqref{eqn:imex-ode}
\begin{equation}
\begin{bmatrix} y \\  w \end{bmatrix}' = \underbrace{ \begin{bmatrix} \mu\, (y - w)  \\ 0  \end{bmatrix} }_{g(t,y)} + \underbrace{  \begin{bmatrix}  \phi'(t) \\  \phi'(t) \end{bmatrix} }_{f(t,y)} ~, \quad \mu < 0~, \quad \begin{bmatrix} y(0) \\  w(0) \end{bmatrix}= \begin{bmatrix} \phi(0) \\  \phi(0) \end{bmatrix} \,,
\label{Prothero-Robinson-modified}
\end{equation}
The method \eqref{eqn:imexRK} applied to the scalar equation \eqref{Prothero-Robinson-modified} reads
\begin{subequations}
\label{eqn:imex-on-PRM}
\begin{eqnarray}
\label{eqn:imex-PRM-explicit-stage}
Y &=& y_{n} \, \one + h  \,  \mathbf{A}^\EE \, \phi'^\E   + h  \, \mu \, \mathbf{A}^\EI \,  \left( Z -  W \right)\,, \\
\label{eqn:imex-PRM-implicit-stage}
Z  &=& y_{n}\, \one  + h  \,  \mathbf{A}^\IE \, \, \phi'^\E   + h  \, \mu \, \mathbf{A}^\II \,  \left( Z -   W \right)\,, \\
\label{eqn:imex-PRM-W-stage}
W &=& w_{n} \, \one + h  \,  \mathbf{A}^\IE \, \phi'^\I   \,, \\
\label{eqn:imex-sol3}
y_{n+1} &=& y_n + h \, \mathbf{b}^\E\,^T \,\, \phi'^\E  + h   \, \mu \, \mathbf{b}^\I\,^T \, \left( Z -  W\right)\,, \\
\label{eqn:imex-W-sol}
w_{n+1} &=& w_n + h \, \mathbf{b}^\E\,^T \, \phi'^\E  \,.
\end{eqnarray}
\end{subequations}
Here 
\begin{eqnarray*}
\phi^\E &=&  \phi\left(t_{n-1} + \mathbf{d}^\E\, h \right) = \left[  \phi(t_{n-1} +d_1^\E\, h ),\ldots,\phi(t_{n-1} + d_{s^\E}^\E\, h ) \right]^T\,, \\
\phi^\I &=&  \phi\left(t_{n-1} + \mathbf{d}^\I\, h \right) = \left[  \phi(t_{n-1} +d_1^\I\, h ),\ldots,\phi(t_{n-1} + d_{s^\I}^\I\, h ) \right]^T\,, \\
\end{eqnarray*}
where $\mathbf{d}^\E$, $\mathbf{d}^\I$ are the stage approximation times.  Due to the structure of the test problem \eqref{Prothero-Robinson-modified},
the method \eqref{eqn:imexRK} uses an explicit approach for the time variable, therefore
\begin{equation}
\label{eqn:d-values-m}
\mathbf{d}^\E = \mathbf{c}^\EE\,, \quad \mathbf{d}^\I = \mathbf{c}^\IE\,.
\end{equation}
The exact solution is expanded in Taylor series about $t_{n}$:
\begin{equation}
\label{eqn:phi-taylor-m}
\begin{array}{rcl}
\phi\left(t_{n} + \mathbf{d}\, h \right)-\one\,\phi(t_{n}) &=& \displaystyle \sum_{k=1}^\infty \frac{h^k \mathbf{d}^k }{k!}\phi^{(k)}(t_{n}) \,,\\
h\, \phi'\left(t_{n} + \mathbf{d}\, h \right) &=& \displaystyle  \sum_{k=1}^\infty \frac{k h^k \mathbf{d}^{k-1} }{k!}\phi^{(k)}(t_{n}) \,,
\end{array}
\end{equation}
where the vector power $\mathbf{d}^{k}$ is taken componentwise.

Consider the global errors
\begin{eqnarray*}
&& e_n = y_n - \phi(t_n)~, \quad E_Y = Y - \phi^\E, \quad E_Z = Z - \phi^\I,  \quad E_W = W - \phi^\I\,.
\end{eqnarray*}
From \eqref{eqn:imex-W-sol} and the quadrature property of the explicit component method we infer that
$w_n = \phi(t_n) + \mathcal{O}(h^p)$. From \eqref{eqn:imex-PRM-W-stage} and \eqref{eqn:phi-taylor-m}
\begin{eqnarray*}
W &=&\one \, \phi(t_n) +  \mathbf{A}^\IE \, \sum_{k=1}^\infty \frac{k h^k (\mathbf{d}^\I)^{k-1} }{k!}\phi^{(k)}(t_{n})   + \mathcal{O}(h^p) 
\,.
\end{eqnarray*}

Write the stage equation \eqref{eqn:imex-PRM-implicit-stage} 
\[ Z-W  = y_{n}\, \one  + h  \,  \mathbf{A}^\IE \, \, \phi'^\E   + h  \, \mu \, \mathbf{A}^\II \,  \left( Z -   W \right) -W \]
in terms of the exact solution and global errors,
and use the Taylor expansions \eqref{eqn:phi-taylor} to obtain
\begin{eqnarray*}
\left( \mathbf{I} - h\,\mu\, \mathbf{A}^\II \right)\, \left(E_Z-E_W\right)  &=& e_{n} \, \one + \phi(t_n) \, \one    + \mathbf{A}^\IE \,  \left( h  \,  \phi'^\E \right) - W \\
 &=& e_{n} \, \one + \delta_Z + \mathcal{O}(h^p) \,,  \\
\delta_Z &=&   \sum_{k=1}^\infty \left( k\, \mathbf{A}^\IE \left(\mathbf{d}^\E\right)^{k-1} - k\, \mathbf{A}^\IE \left(\mathbf{d}^\I\right)^{k-1}  \right) \frac{h^k}{k!}\phi^{(k)}(t_{n})\,.
\end{eqnarray*}
Similarly, write the solution equation (\ref{eqn:imex-sol}) in terms of the exact solution and global errors:
\begin{eqnarray*}
e_{n+1} &=& e_n  + \phi(t_n) - \phi(t_{n+1})+ \mathbf{b}^\E\,^T \, h\, \phi'^\E  + h   \, \mu \, \mathbf{b}^\I\,^T \, \left( E_Z -  E_W \right) \\
&=& R^\II(h \mu) \,e_n + \sum_{k=1}^\infty \left(k\,  \mathbf{b}^\E\,^T\left( \mathbf{d}^\E \right)^{k-1}-1\right) \frac{h^k \,  }{k!}\phi^{(k)}(t_{n}) \\
&& + h   \, \mu \, \mathbf{b}^\I\,^T \, \left( \mathbf{I} - h\,\mu\, \mathbf{A}^\II \right)^{-1}\;
\sum_{k=1}^\infty \left( k\, \mathbf{A}^\IE \left(\mathbf{d}^\E\right)^{k-1}  
\right. \\ && \qquad \left. 
-k\, \mathbf{A}^\IE \left(\mathbf{d}^\I\right)^{k-1}  \right) \frac{h^k}{k!}\phi^{(k)}(t_{n})  + \mathcal{O}(h^p)
\end{eqnarray*}

The stability function of the implicit component method is
\[
R^\II(h \mu) =  \left( 1 +  h   \, \mu \, \mathbf{b}^\I\,^T \, \left( \mathbf{I} - h\,\mu\, \mathbf{A}^\II \right)^{-1}\, \one \right)\,.
\]
Since the explicit component method (by itself) has at least order $p$, it follows from \eqref{eqn:d-values} and the explicit order conditions that
\begin{equation}
\label{eqn:PRM-explicit-order-cancellation}
k \cdot \left( \mathbf{b}^\E\right)^T\left( \mathbf{d}^\E \right)^{k-1}-1 = 0 \quad \mbox{for}~~ k = 1,\dots,p\,.
\end{equation}
Consequently, the global error recurrence reads
\begin{eqnarray*}
e_{n+1} 
&=& R^\II(\infty) \,e_n + \\
&& - \mathbf{b}^\I\,^T \, \mathbf{A}^\II \,^{-1}\;
\sum_{k=1}^\infty \left( k\, \mathbf{A}^\IE \left(\mathbf{d}^\E\right)^{k-1}  
\right. \\ && \qquad \left. 
-k\, \mathbf{A}^\IE \left(\mathbf{d}^\I\right)^{k-1}  \right) \frac{h^k}{k!}\phi^{(k)}(t_{n})  + \mathcal{O}(h^p)
\end{eqnarray*}

\end{document}